\documentclass[smallextended,referee,envcountsect]{svjour3}

\smartqed
\usepackage{graphicx}
\usepackage{amsmath,amssymb,amsfonts}
\usepackage{enumitem}
\usepackage{booktabs}
\usepackage{algorithmic}
\usepackage[ruled,vlined,linesnumbered]{algorithm2e}
\usepackage[caption=false]{subfig}
\usepackage{epstopdf}
\usepackage{xcolor}
\usepackage{hyperref}
\usepackage{newtxtext,newtxmath}
\usepackage[margin = 2.4cm]{geometry}

\definecolor{myciteblue}{RGB}{0,92,175}
\definecolor{myrefteal}{RGB}{0,110,120}
\definecolor{myurlred}{RGB}{150,50,50}

\hypersetup{
    colorlinks=true,
    citecolor=blue,
    linkcolor=myciteblue,
    urlcolor=myciteblue
}

\DeclareMathOperator*{\argmin}{arg\,min}
\newcommand{\dom}{\ensuremath{\operatorname{dom}}}
\newcommand{\ipH}[2]{\langle #1,#2\rangle_P}
\newcommand{\nH}[1]{\lVert #1\rVert_P}

\SetKwInput{KwIn}{Input}
\DontPrintSemicolon

\journalname{JOTA}

\makeatletter

\renewcommand\normalsize{%
   \@setfontsize\normalsize{12pt}{14pt}%
   \abovedisplayskip=3 mm plus6pt minus 4pt
   \belowdisplayskip=3 mm plus6pt minus 4pt
   \abovedisplayshortskip=0.0 mm plus6pt
   \belowdisplayshortskip=2 mm plus4pt minus 4pt
   \let\@listi\@listI
}

\renewcommand\small{%
   \@setfontsize\small{10pt}{12pt}%
   \abovedisplayskip 8.5\p@ \@plus3\p@ \@minus4\p@
   \abovedisplayshortskip \z@ \@plus2\p@
   \belowdisplayshortskip 4\p@ \@plus2\p@ \@minus2\p@
   \def\@listi{\leftmargin\leftmargini
               \parsep 0\p@ \@plus1\p@ \@minus\p@
               \topsep 4\p@ \@plus2\p@ \@minus4\p@
               \itemsep0\p@}%
   \belowdisplayskip \abovedisplayskip
}

\renewcommand\large{\@setfontsize\large{12pt}{14pt}}
\renewcommand\LARGE{\@setfontsize\LARGE{14pt}{16pt}}
\renewcommand\Large{\@setfontsize\Large{16pt}{18pt}}

\normalsize
\makeatother

\begin{document}

\title{The Golden Ratio Proximal ADMM with Norm Independent Step-Sizes for Separable Convex Optimization}

\titlerunning{Golden Ratio Proximal ADMM with Norm Independent Step-Sizes}

\author{Santanu Soe \and V. Vetrivel}

\authorrunning{S. Soe and V. Vetrivel}

\institute{%
Santanu Soe \at
Department of Mathematics, Indian Institute of Technology Madras, Chennai 600036, India; and
School of Mathematics and Statistics, The University of Melbourne, Parkville, VIC 3010, Australia; 
 \email{\href{ma22d002@smail.iitm.ac.in, santanu.soe@student.unimelb.edu.au}{ma22d002@smail.iitm.ac.in, santanu.soe@student.unimelb.edu.au}}
\and
Vellaichamy Vetrivel \at
Department of Mathematics, Indian Institute of Technology Madras, Chennai 600036, India; 
\email{\href{vetri@iitm.ac.in}{vetri@iitm.ac.in}}
}

\maketitle

\begin{abstract}
In this work, we propose two step-size strategies for the Golden ratio proximal ADMM (GrpADMM) to solve linearly constrained separable convex optimization problems. Both strategies eliminate explicit operator norm estimates by relying on inexpensive local information computed at the current iterate and requiring no backtracking. However, the key difference is that the second step-size strategy allows recovery from poor initial steps and can increase from iteration to iteration. Under standard assumptions, we establish global convergence of the generated iterates and derive sublinear convergence rates for both algorithms. We also obtain pointwise convergence rate results for the iterates of the algorithms. In addition, we show that the first proposed step-size rule for GrpADMM reduces to the fixed step-size counterpart when the initial step-size is chosen below a certain threshold. Preliminary numerical experiments demonstrate the practical adaptability and effectiveness of the proposed approaches.
\end{abstract}



\medskip
\keywords{GrpADMM \and ADMM \and Golden Ratio \and Non-Decreasing Step-Sizes \and Rate of Convergence \and Proximal Methods}
\subclass{90C25 \and 65K10 \and 49M27 \and 65J10}


\section{Introduction}\label{sec:intro}
In this work, we consider the linearly constrained, separable convex optimization model
\begin{equation}\label{eq:model}
  \min_{x\in\mathbb{R}^{q},\, w\in\mathbb{R}^{p}} \; g(x) + f(w)
  \quad \text{subject to} \quad A x + B w = b,
\end{equation}
where $g:\mathbb{R}^q \to \mathbb{R}\cup\{+\infty\}$ and $f:\mathbb{R}^p \to \mathbb{R}\cup\{+\infty\}$ are proper, closed, convex functions (not necessarily differentiable), $A\in\mathbb{R}^{m\times q}$ and $B\in\mathbb{R}^{m\times p}$ are given linear operators, and $b\in\mathbb{R}^m$. 
Side constraints (e.g., bounds, sparsity, indicator restrictions) can be encoded via the \emph{effective domains} of $g$ and $f$.
The model~\eqref{eq:model} captures a broad spectrum of applications in signal and image processing, machine learning, statistical learning, and large-scale optimization problems; see, e.g., \cite{boyd2011distributed,chambolle2011first,yang2013linearized,tao2011recovering,yuan2012alternating,padcharoen2019augmented}.

\section{Preliminaries and assumptions}\label{subsec:notation}
Given vectors $u,v\in\mathbb{R}^n$, $\langle u,v\rangle$ denotes the standard inner product, and $\|u\|:=\sqrt{\langle u,u\rangle}$ denotes the associated norm. For a proper, closed, and convex function $h$, its \emph{effective domain} is
\[
    \operatorname{dom}(h) := \{\, x \;:\; h(x) < +\infty \,\},
\]
and its \emph{subdifferential} at $x\in \operatorname{dom}(h)$ is defined as
\[
    \partial h(x) := \{\, s \;:\; h(y) \ge h(x) + \langle s, y-x\rangle \;,~~ \forall y \,\}.
\]
We write $\operatorname{ri}(C)$ for the \emph{relative interior} of a convex set $C$. Given a matrix $M\in\mathbb{R}^{m\times m}$, $M^\top$ denotes its transpose and $I$ denotes the identity matrix. The set of all $m\times m$ real \emph{symmetric positive semidefinite} (resp.\ \emph{positive definite}) matrices is denoted by $\mathbb{S}_+^m$ (resp.\ $\mathbb{S}_{++}^m$). Alternatively, given $M\in\mathbb{S}_+^m$ (resp.\ $\mathbb{S}_{++}^m$), we write $M\succeq0$ (resp. \ $M\succ 0$). For $M\in \mathbb{S}_+^m$ and $y,z\in\mathbb{R}^m$, we let
\[
    \langle y,z\rangle_M := y^\top M z,
    \qquad
    \|y\|_M := \sqrt{\langle y,y\rangle_M}.
\]
We denote by $\lambda_{\min}(M)$ the smallest eigenvalue of $M$, and $\|A\| := \sup\{\|Ax\| : \|x\|=1\}$ for the operator norm of a linear map $A$. We write $\operatorname{blkdiag}(M_1,\dots,M_r)$ for the \emph{block-diagonal matrix} with diagonal blocks $M_1,\dots,M_r$. In particular,  for scalar $a_1,a_2,\ldots,a_r$ and identity matrix $I\in\mathbb{R}^{d\times d}$, $\operatorname{blkdiag}(a_1I,\dots,a_rI)$ denotes the block-diagonal matrix whose $i$-th block is $a_iI$. Given a nonempty set $C \subseteq \mathbb{R}^d$ and a point $z \in \mathbb{R}^d$, we define the distance from $z$ to $C$ by
\[
\operatorname{dist}(z,C):=\inf_{u\in C}\|z-u\|.
\]
If $C$ is closed and convex, then the \emph{Euclidean projection} of $z$ onto $C$ is denoted by $\Pi_C(z)$, and is defined as
\[
\Pi_C(z):=\arg\min_{u\in C}\|z-u\|.
\]
In this case, the distance and the projection are related through
\[
\operatorname{dist}(z,C)=\|z-\Pi_C(z)\|.
\]
We denote by $\mathbb{N}:=\{1,2,\ldots\}$ the set of all positive integers, and by $\mathbb{R}$ the set of all real numbers. Moreover, we write $\mathbb{R}_+^n:=\{u\in\mathbb{R}^n:\ u_i\geq 0,\ i=1,\ldots,n\}$ for the nonnegative orthant in $\mathbb{R}^n$, and $\Delta^n:=\{u\in\mathbb{R}_+^n:\ \mathbf{1}^\top u=1\}$ for the \emph{probability simplex} in $\mathbb{R}^n$, where $\mathbf{1}$ denotes the all ones vector of appropriate dimension. The Euclidean projection onto $\mathbb{R}_+^n$ is denoted by $\Pi_{\mathbb{R}_+^n}$ and is given componentwise by
\[
\Pi_{\mathbb{R}_+^n}(z)
=
\bigl(\max\{z_1,0\},\dots,\max\{z_n,0\}\bigr)
\quad \text{for all } z=(z_1,\dots,z_n)\in\mathbb{R}^n.
\]

Throughout, we denote the Golden ratio by $\varphi := \frac{1+\sqrt{5}}{2}.$ Depending on the algorithm under consideration, the parameter $\psi$ will be chosen in $(1,\varphi]$ or $(1,\varphi)$, or a larger admissible interval specified later. For a sequence $(x_k)\in\mathbb{R}^n$ such that $\lim_{k\to\infty}x_k = \alpha$, we alternatively write either $x_k\to\alpha$ or $\|x_k-\alpha\|\to0$.

\medskip
Given a \emph{Lagrange multiplier} $y\in\mathbb{R}^m$ for the linear equality constraint $Ax+Bw=b$, and a penalty parameter $\sigma>0$, the objective, Lagrangian function, and the augmented Lagrangian function associated with~\eqref{eq:model} are
\begin{align}
    \Phi(x,w) &:= g(x)+f(w)\nonumber,\\
    \mathbb{L}(x,w,y) &:= \Phi(x,w) + \langle y,\, Ax + Bw - b\rangle,\label{eq:lagrangian}\\
    \mathbb{L}_\sigma(x,w,y) &:= \mathbb{L}(x,w,y) + \frac{\sigma}{2}\,\|Ax + Bw - b\|^2\label{eq:aug_lagrangian} .
\end{align}

We now state the following blanket assumption, which will be used throughout the paper.
\begin{assumption}\label{assump:basic}
\leavevmode
\begin{enumerate}
\item The solution set of~\eqref{eq:model} is nonempty.
\item There exist $\tilde x\in \operatorname{ri}(\dom g)$ and $\tilde w\in \operatorname{ri}(\operatorname{dom} f)$ such that
$A\tilde x + B\tilde w = b$.
\end{enumerate}
\end{assumption}
Under Assumption~\ref{assump:basic}, it follows from \cite[Corollaries 28.2.2 and 28.3.1]{rockafellar1970convex} that a pair $(x^\star,w^\star)$ solves~\eqref{eq:model} if and only if there exists $y^\star\in\mathbb{R}^m$ such that $(x^\star,w^\star,y^\star)$ is a saddle point of $\mathbb{L}$, which is equivalently characterized by the following inequality
\begin{equation}\label{eq:saddle}
    \mathbb{L}(x^\star,w^\star, y) \;\le\; \mathbb{L}(x^\star,w^\star, y^\star) \;\le\; \mathbb{L}(x,w,y^\star)
    \quad \text{for all } (x,w,y)\in\mathbb{R}^q\times\mathbb{R}^p\times\mathbb{R}^m .
\end{equation}
We denote the optimal objective value by
\[
    \Phi^\star := \min\{\Phi(x,w): Ax + Bw = b\}.
\]

We end this section by stating the following useful lemmas.

\begin{lemma} \cite{Chen2023GRPADMM}\label{lemm:cosrule}
Let $P\in\mathbb{S}_m^{+}$. Then, for any $a,b,c,d\in\mathbb{R}^m$ and  $\theta\in\mathbb{R}$, we have
\begin{subequations}\label{eq:cosrule}
\begin{align}
\label{eq:cosrule:fourpoint}
2\,\ipH{a-b}{c-d}
  &= \nH{a-d}^{2}+\nH{b-c}^{2}-\nH{a-c}^{2}-\nH{b-d}^{2}, \\[1mm]
\label{eq:cosrule:convexcomb}
\nH{(1-\theta)a+\theta b}^{2}
  &= (1-\theta)\nH{a}^{2}+\theta\nH{b}^{2}-\theta(1-\theta)\nH{a-b}^{2}.
\end{align}
\end{subequations}
\end{lemma}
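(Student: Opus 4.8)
The plan is to treat both identities as purely algebraic consequences of the fact that, for $P\in\mathbb{S}_m^{+}$, the map $(u,v)\mapsto\ipH{u}{v}=u^\top P v$ is a symmetric bilinear form on $\mathbb{R}^m$, with $\nH{u}^2=\ipH{u}{u}$; symmetry follows from $P=P^\top$, and bilinearity is immediate from the matrix--vector product. Positive semidefiniteness plays no role in the derivation beyond guaranteeing that $\nH{\cdot}$ is a genuine seminorm, so the identities in fact hold verbatim for any symmetric $P$.

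For \eqref{eq:cosrule:fourpoint}, I would start from the right-hand side and expand each of the four terms using the binomial rule $\nH{u-v}^2=\nH{u}^2-2\ipH{u}{v}+\nH{v}^2$. Applying this to $\nH{a-d}^2$, $\nH{b-c}^2$, $\nH{a-c}^2$, and $\nH{b-d}^2$, every pure square $\nH{a}^2,\nH{b}^2,\nH{c}^2,\nH{d}^2$ appears once with a $+$ sign and once with a $-$ sign, and hence cancels. What remains are the cross terms $2\ipH{a}{c}-2\ipH{a}{d}-2\ipH{b}{c}+2\ipH{b}{d}$, which factor as $2\ipH{a-b}{c-d}$ by bilinearity, yielding the left-hand side.

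For \eqref{eq:cosrule:convexcomb}, I would instead expand the left-hand side directly: by bilinearity, $\nH{(1-\theta)a+\theta b}^2=(1-\theta)^2\nH{a}^2+2\theta(1-\theta)\ipH{a}{b}+\theta^2\nH{b}^2$. On the right-hand side, expanding the single term $\nH{a-b}^2=\nH{a}^2-2\ipH{a}{b}+\nH{b}^2$ and collecting coefficients gives $(1-\theta)-\theta(1-\theta)=(1-\theta)^2$ in front of $\nH{a}^2$, $\theta-\theta(1-\theta)=\theta^2$ in front of $\nH{b}^2$, and $+2\theta(1-\theta)$ in front of $\ipH{a}{b}$; matching these with the expansion of the left-hand side closes the identity.

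There is no genuine obstacle here: both statements are instances of the standard polarization/parallelogram phenomenon, and the only care required is sign bookkeeping in the cancellations together with the observation that it is symmetry of $P$, not definiteness, that is actually used. If one wished to be economical, \eqref{eq:cosrule:convexcomb} could also be recovered from \eqref{eq:cosrule:fourpoint} via a suitable choice of the four points, but the two direct expansions above are the shortest to verify.
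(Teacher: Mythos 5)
Your proof is correct: both expansions check out, the sign bookkeeping is right, and your observation that only the symmetry of $P$ (not its semidefiniteness) is needed is accurate. Note that the paper itself gives no proof of this lemma — it is imported by citation from the GrpADMM reference — and your direct bilinear expansion is precisely the standard argument used there, so there is nothing further to compare.
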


\begin{lemma}\cite{Chen2023GRPADMM}\label{lem:fejer-basic}
Let $(\alpha_k)_{k\ge0}$ and $(\delta_k)_{k\ge0}$ be two nonnegative real sequences.
Assume that there exists a natural number $\tilde k\in\mathbb{N}$ such that
\[
\alpha_{k+1}\le \alpha_k-\delta_k\quad\text{for all } k\geq \tilde k.
\]
Then $(\alpha_k)$ has a finite limit and $\sum_{k=\tilde k}^{\infty}\delta_k<\infty$.
\end{lemma}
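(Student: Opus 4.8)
The plan is to exploit that the recursion $\alpha_{k+1}\le\alpha_k-\delta_k$ with $\delta_k\ge0$ forces $(\alpha_k)$ to be eventually non-increasing, and then to combine the monotone convergence theorem for real sequences with a telescoping estimate. These two observations deliver the finite limit and the summability, respectively.

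First I would note that for every $k\ge\tilde k$, nonnegativity of $\delta_k$ gives $\alpha_{k+1}\le\alpha_k-\delta_k\le\alpha_k$, so the tail $(\alpha_k)_{k\ge\tilde k}$ is non-increasing. Since each $\alpha_k\ge0$, the sequence is bounded below by $0$, and the monotone convergence theorem yields a finite limit $\alpha_\infty:=\lim_{k\to\infty}\alpha_k\ge0$. (The finitely many terms before $\tilde k$ are irrelevant to the existence of the limit.)

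Next, for the summability claim I would telescope the hypothesis. Rearranging gives $\delta_k\le\alpha_k-\alpha_{k+1}$ for $k\ge\tilde k$, so for any integer $N>\tilde k$,
\[
\sum_{k=\tilde k}^{N}\delta_k \;\le\; \sum_{k=\tilde k}^{N}(\alpha_k-\alpha_{k+1}) \;=\; \alpha_{\tilde k}-\alpha_{N+1}\;\le\;\alpha_{\tilde k},
\]
where the last step uses $\alpha_{N+1}\ge0$. Thus the partial sums of the nonnegative series $\sum_{k\ge\tilde k}\delta_k$ are bounded above by $\alpha_{\tilde k}<\infty$, and a nonnegative series with bounded partial sums converges. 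Adjoining the finitely many finite terms $\delta_1,\dots,\delta_{\tilde k-1}$ leaves convergence unaffected, giving $\sum_{k=1}^{\infty}\delta_k<\infty$.

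Since both conclusions reduce to elementary monotone-sequence and telescoping arguments, I do not anticipate a genuine obstacle. The only point deserving minor care is the bookkeeping of the index shift, ensuring that the inequality is invoked solely for $k\ge\tilde k$ while the stated series is indexed from $k=1$; handling the initial block as a finite (hence harmless) sum resolves this.
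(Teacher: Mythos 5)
Your proof is correct and complete: the monotone-convergence argument for the finite limit and the telescoping bound for $\sum_k \delta_k$ constitute precisely the standard argument for this lemma, which the paper itself states without proof (citing \cite{Chen2023GRPADMM}). The index bookkeeping you flag is handled properly, so there is nothing to add.
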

\begin{lemma}\label{usefullemma_3}
Given $a,b,p,q\in\mathbb{R}$ with $p+q>0$, we have
\[
\frac{pq}{p+q}(a+b)^2 \le p\,a^2 + q\,b^2.
\]
\end{lemma}
\begin{proof}
For the sake of completeness, we provide a proof. Observe that
\[
(p+q)(pa^2+qb^2)-pq(a+b)^2
= p^2a^2-2pqab+q^2b^2
= (pa-qb)^2\ge 0.
\]
Since $p+q>0$, the conclusion follows immediately after dividing by $p+q$.
\end{proof}
\begin{lemma}\label{lemm: sum un and vn}
Let $0<q_1<1$, and let $(a_n)$ and $(b_n)$ be nonnegative sequences such that $a_n\le q_1\,a_{n-1}+b_n$ for all $n\ge 1$.
If $\sum_{n=1}^\infty b_n<\infty$, then $\sum_{n=1}^\infty a_n<\infty$.
\end{lemma}
\section{Literature review}
In the literature, several methods have been proposed for solving \eqref{eq:model}, including the method of multipliers \cite{Hestenes1969,powell1969method}, also known as the augmented Lagrangian method (ALM), the alternating direction method of multipliers (\ref{eq:ADMM}) \cite{GabayMercier1976,GlowinskiMarroco1975}, proximal ADMM (\ref{eq:PADMM}) \cite{Eckstein1994,shefi2014rate}, and their different variants. Although ALM jointly solves \eqref{eq:model} using the augmented Lagrangian function, it does not take advantage of the separable structure of the objective function, whereas \ref{eq:ADMM} exploits the separable structure of the objective and decomposes problem \eqref{eq:model} into simpler subproblems that can be solved relatively easily. Given $(x_0,w_0,y_0)$ and a penalty parameter $\sigma>0$, the iteration scheme of ADMM is
\begin{equation}
\label{eq:ADMM}
\left\{
  \begin{aligned}
    x_{k+1}&:= \arg\min_x\ \mathbb{L}_\sigma(x,w_k,y_k),\\
    w_{k+1}&:= \arg\min_w\ \mathbb{L}_\sigma(x_{k+1},w,y_k),\\
    y_{k+1}&:= y_k + \sigma\,(A x_{k+1} + B w_{k+1} - b).
  \end{aligned}
\right.
\tag{ADMM}
\end{equation}
Under mild assumptions, the objective values and feasibility residuals converge, and the method enjoys an ergodic sublinear rate \cite{boyd2011distributed,EcksteinBertsekas1992}. However, in general, the \emph{primal iterates} may fail to converge. To address this, Eckstein \cite{Eckstein1994} and subsequent works \cite{ParikhBoyd2014Prox,he2002new} introduce quadratic proximal terms in the $x$ and $w$-updates with weighted matrices. Given $(x_0,w_0,y_0)$, $\sigma>0$ and weights $S\in S^{q}_{+}$, $T\in S^{p}_{+}$, the PADMM iterates take the form
\begin{equation}\label{eq:PADMM}
\left\{
\begin{aligned}
x_{k+1}
&:= \arg\min_x\ \mathbb{L}_\sigma(x,w_k,y_k) + \tfrac12\|x-x_k\|_S^2,\\[2pt]
w_{k+1}
&:= \arg\min_w\ \mathbb{L}_\sigma(x_{k+1},w,y_k) + \tfrac12\|w-w_k\|_T^2,\\[2pt]
y_{k+1}
&:= y_k + \sigma\,(A x_{k+1} + B w_{k+1} - b).
\end{aligned}
\right.
\tag{PADMM}
\end{equation}

Appropriate choices of $(S, T)$ can simplify subproblems and stabilise the iterates of PADMM. For example, when $S=T=0$, \ref{eq:PADMM} reduces to \ref{eq:ADMM}. Furthermore, when $S=\tfrac{1}{\tau}I-\sigma A^\top A$ and $T=\tfrac{1}{\eta}I-\sigma B^\top B$, where $\tau, \eta>0$, the subproblems of the \ref{eq:PADMM} can be solved using the proximal operators of $f$ and $g$, in which case, the resulting algorithm is known as the linearized ADMM; see \cite{chen2015inertial,he2002new,wang2012linearized}. In particular, when $B=-I$ and $b=0$, the authors \cite{shefi2014rate} proved the sublinear rate results for the linearized ADMM, measured by function value residual and constraint violation. Furthermore, it was shown in \cite{shefi2014rate} that whenever $S\succ 0$, or $S=0$ and $A$ has full column rank, the sequence generated by \eqref{eq:PADMM} converges to a saddle point of $\mathbb{L}$. We also refer the reader to \cite{ouyang2015accelerated,nesterov1983method} for inertial and accelerated variants of related methods. Various symmetric and generalised ADMM variants have been proposed for separable convex optimization. For example, Bai et al.~\cite{bai2018generalized} introduced GS-ADMM for multi-block problems, and its sublinear nonergodic and linear convergence properties were further studied in \cite{bai2021convergence}. Recent developments also include convex-combination, stochastic, inexact, and accelerated ADMM schemes; see, e.g., \cite{xu2017accelerated,li2019accelerated,chen2015inertial,chen2018stochastic}. In particular, Wang et al.~\cite{wang2025convex} proposed a convex combined symmetric ADMM for separable convex optimisation, while Bai et al.~\cite{bai2022inexact,bai2022symmetric} developed stochastic accelerated variants. Moreover, Han et al.~\cite{han2018linear} established linear convergence of ADMM for convex composite programming. From a broader operator-splitting perspective, Bo\c{t} and Csetnek~\cite{bot2019admm,bot2015forwardbackward} studied ADMM and related primal--dual schemes in monotone-operator frameworks. Beyond the convex setting, proximal and Bregman-style ADMM variants have also been investigated for nonconvex and nonsmooth problems; see, for example \cite{bot2020proximal,li2026proximal,liu2023bregman,liu2024bregman,bai2026proximal,liu2025half}. In this paper, we only focus on the case where the component functions of \eqref{eq:model} are convex. We also note that the literature on ADMM and its variants is extremely vast, and a comprehensive review is beyond the scope of this paper.

An interesting variant of \eqref{eq:PADMM} was recently studied by Chen et al. \cite[Algorithm 1]{Chen2023GRPADMM}. They proposed a proximal ADMM based on a Golden ratio extrapolation, namely GrpADMM, which converges to a solution under more relaxed parameter choices. Given $(x_0,w_0,y_0)$ with $u_0=x_0$, $S\in S^{q}_{+}$, $T\in S^{p}_{+}$ and $\psi\in(1,\varphi]$, the iteration scheme of GrpADMM is 
\begin{equation}\label{eq:GrpADMM}
\left\{
\begin{aligned}
u_k &:=\tfrac{\psi-1}{\psi}\,x_{k-1}+\tfrac{1}{\psi}\,u_{k-1},\\[2pt]
x_k &:=\arg\min_x\ \mathbb{L}(x,w_{k-1},y_{k-1})+\tfrac{1}{2\tau}\|x-u_k\|_{S}^{2},\\[2pt]
w_k&:=\arg\min_w\ \mathbb{L}_{\sigma}(x_k,w,y_{k-1})+\tfrac12\|w-w_{k-1}\|_{T}^{2},\\[2pt]
y_k&:=y_{k-1}+\sigma\,(A x_k+B w_k-b),
\end{aligned}
\right.
\tag{GrpADMM}
\end{equation}
where $\tau, \sigma>0$ are positive step-sizes. Note that although $w$ and $y$-updates of \ref{eq:GrpADMM} and \ref{eq:PADMM} are the same, in the $x$-update, the convex combination of the golden ratio $u_k$ is used, which is the combination of all the previous iterates $x_0, x_1$, all the way up to $x_{k-1}$. In the special case when $S=I, T=0$ and $B=-I$ with $b=0$, \ref{eq:GrpADMM} reduces to the GRPDA algorithm proposed by Chang and Yang \cite{chang2021golden}. Under the requirements $S,T\succ0$, the sequence generated by \ref{eq:GrpADMM} converges to a saddle point of $\mathbb{L}$, provided $\tau\sigma\|A\|^2<\psi\lambda_{\min}(S)$ is satisfied, where $\psi\in(1,\varphi]$, see \cite[Theorem 2.1]{Chen2023GRPADMM}. An advantage of \ref{eq:GrpADMM} is that, when $S=I$ and $T=\tfrac{1}{\eta}I-\sigma B^\top B$, the step-sizes need to satisfy $\tau\sigma\|A\|^2<\psi$ and $\eta\sigma\|B\|^2<1,$
with $\psi\in(1,\varphi]$. In contrast, for \ref{eq:PADMM}, the conditions are
$\tau\sigma\|A\|^2<1$ and $\eta\sigma\|B\|^2<1.$ Since $\psi>1$, the constraint on $\tau$ and $\sigma$ in \ref{eq:GrpADMM} is strictly less restrictive than in \ref{eq:PADMM}, permitting a broader range of admissible parameter choices. Nonetheless, both \eqref{eq:PADMM} and \eqref{eq:GrpADMM} require prior knowledge of $\|A\|$ in order to choose suitable step-sizes $\tau$ and $\sigma$. For large-scale convex optimisation problems, however, computing or accurately estimating $\|A\|$ can be expensive and, in some cases, infeasible. Thus, a natural question, also suggested in the conclusion of \cite{Chen2023GRPADMM}, is the following:
\begin{quote}
    Can one develop variants of \eqref{eq:GrpADMM} that do not require the computation of $\|A\|$ for choosing admissible values of $\tau$ and $\sigma$?
\end{quote}
\noindent
In this paper, we answer this question affirmatively. In particular, we design iterative step-size rules that avoid explicit dependence on such parameters. To this end, we propose two step-size strategies for solving \eqref{eq:model} based on \eqref{eq:GrpADMM}. In the first strategy, the primal step-size sequence $(\tau_k)$ is decreasing, and it converges to a positive constant. This crucial fact enables us to prove the global convergence of Algorithm \ref{alg:1}. A similar type of strategy has been proposed in \cite[Algorithm 2]{soe2026golden} to solve three-operator splitting problems in which one of the operators is globally smooth. However, our framework is more general than the one considered in \cite{soe2026golden}, and in this case, the smooth part is zero; see Remark \ref{connec_with_soe et al._paper}. In the second step-size strategy, $(\tau_k)$ is allowed to be non-decreasing, at the cost of modifying the proximal term in the $w$-subproblem compared with \eqref{eq:GrpADMM}, and restricting the choice of the parameter $\psi$ so that the golden ratio $\varphi$ is excluded. The latter strategy can be advantageous when the initial step-size is chosen too conservatively, since the step-size may increase along the iterations. By contrast, in the former strategy, if $\tau_0$ is less than or equal to a certain threshold, the step-size may become fixed, which can lead to very slow convergence; see Remark~\ref{Rmk:tau fixed case} for a detailed discussion.

The main contributions of this paper can be summarised as follows.

\begin{itemize}
    \item In Section~\ref{sec_4}, we propose a decreasing step-size rule for \eqref{eq:GrpADMM} that provides a local estimate of $\|A\|$ ( see \eqref{eq:steps tau_k} in Algorithm~\ref{alg:1}), without using any backtracking procedure. For this algorithm, we establish global convergence of the generated iterates, together with ergodic sublinear convergence rates in terms of the objective residual and the feasibility violation. In the worst-case scenario, when the initial step-size is poorly chosen, we show that our scheme reduces to the existing fixed step-size \ref{eq:GrpADMM} algorithm.

\item  We further prove pointwise convergence of the iterates generated by Algorithm 1. In addition,
we enlarge the admissible range of the parameter $\psi$ from the Golden ratio $\phi$ to
$1+\sqrt{3}$. This wider range may lead to faster convergence in practice, as observed in
\cite{chang2022grpda,soe2026golden} for related special cases of \eqref{eq:model}.

    \item In Section~\ref{sec_5}, we develop a non-decreasing step-size strategy for a modified variant of \eqref{eq:GrpADMM}. Here, the modification refers to the scaled proximal term in the $w$-subproblem of Algorithm~\ref{alg:2} by $\sigma_k^{-1}$, where $\sigma_k$ is determined through \eqref{eq:steps_alg_2_tau_k}. This modification yields a Fej\'er monotonicity property (see Lemma~\ref{lemma_2}). In this setting, the primal step-sizes converge to a positive constant, while still being allowed to increase along the iterations. We prove global convergence of Algorithm~\ref{alg:2}, and also establish its ergodic and pointwise convergence properties.

    \item Finally, in Section~\ref{numerical_section}, we present numerical experiments on several benchmark problems to demonstrate the practical performance of the proposed methods and their advantages over existing approaches.
\end{itemize}

\section{Main results}\label{sec_4}
In this section, we propose a decreasing step-size strategy for \ref{eq:GrpADMM}, where one does not require the explicit computation of $\|A\|$; this is achieved by evaluating the primal steps ($\tau_k$) as the minimum of the previous step-size and an appropriately scaled inverse of a local estimate of $\|A\|$, as presented in Algorithm~\ref{alg:1}. 
For notational convenience, we define the following local approximation of the norm of the operator~$A$:
\[
L_k :=
\dfrac{\|Ax_k-Ax_{k-1}\|}{\|x_k-x_{k-1}\|}, ~~\text{if } x_k \neq x_{k-1}
\]

\begin{algorithm}[H]
\caption{The \ref{eq:GrpADMM} with decreasing step-size to solve \eqref{eq:model}}\label{alg:1}
\KwIn{Let $S\in\mathbb{S}^{q}_{++}$, $T\in\mathbb{S}^{p}_{++}$. Choose $x_{0}\in\mathbb{R}^{q}$, $w_{0}\in\mathbb{R}^{p}$ and $y_{0}\in\mathbb{R}^{m}$ with $u_{0}=x_{0}$. Let $\tau_0>0$, $\beta>0$, $\psi\in(1,\varphi]$ and $0<\mu<\frac{\psi}{2}$.}

\For{$k=1,2,\ldots$}{
  \textbf{Step 1} (Compute)
  \begin{equation}\label{eq:grp_u}
    u_{k}=\frac{\psi-1}{\psi}\,x_{k-1}+\frac{1}{\psi}\,u_{k-1},
  \end{equation}
    \begin{equation}\label{eq:grp_x}
    x_{k}=\argmin_{x}\!\left\{\mathbb{L}(x,w_{k-1},y_{k-1})+\frac{1}{2\tau_{k-1}}\|x-u_{k}\|_{S}^{2}\right\}.
  \end{equation}

  \textbf{Step 2} (Update)
\begin{equation}\label{eq:steps tau_k}
    \tau_k = \min\left\{\tau_{k-1}, \frac{\mu\sqrt{\lambda_{\min}(S)}}{\sqrt{\beta}L_k}\right\}.
\end{equation}

  \textbf{Step 3} (Compute)
 \begin{equation}\label{eq:grp_w}
    w_{k}=\argmin_{w}\!\left\{\mathbb{L}_{\beta\tau
    _k}(x_{k},w,y_{k-1})+\frac{1}{2}\|w-w_{k-1}\|_{T}^{2}\right\},
  \end{equation}
  \begin{equation}\label{eq:grp_y}
    y_{k}=y_{k-1}+\beta\tau
    _k\big(Ax_{k}+Bw_{k}-b\big).
  \end{equation}
}
\end{algorithm}
\begin{remark}\label{Rmk_tau_k_bdd_below}
When $x_k=x_{k-1}$, under the convention $\frac{0}{0}=+\infty$, $\tau_k$ is set to $\tau_{k-1}$. From the update rule \eqref{eq:steps tau_k} and the boundedness of the linear operator $A$, observe that
\begin{equation}\label{tau_k_bdd_below}
    \tau_k \;\geq\min\!\left\{\tau_{k-1},\;
    \frac{\mu\sqrt{\lambda_{\min}(S)}}{\sqrt{\beta}\,\|A\|}\right\}.
\end{equation}
By using induction, we obtain
\[
\tau_k\;\ge\;\underline{\tau}:=\min\!\left\{\tau_0,\;
    \frac{\mu\sqrt{\lambda_{\min}(S)}}{\sqrt{\beta}\,\|A\|}\right\}~~\text{for all}~k.
\]
Moreover, \eqref{eq:steps tau_k} implies that the sequence $(\tau_k)$ is monotonically decreasing. Hence, $(\tau_k)$ is convergent and, by \eqref{tau_k_bdd_below}, its limit satisfies $\lim_{k\to\infty}\tau_k\ge\underline{\tau}>0$.
\end{remark}

\begin{remark}\label{Rmk:tau fixed case}
From \eqref{eq:steps tau_k}, see that if $\tau_0\le \frac{\mu\sqrt{\lambda_{\min}(S)}}{\sqrt{\beta}\|A\|}$, then $\tau_k=\tau_0$ for all $k\ge 1$. Indeed, since $L_k\le \|A\|$ $\forall k$, we have
\[
\frac{\mu\sqrt{\lambda_{\min}(S)}}{\sqrt{\beta}L_k}\ge \frac{\mu\sqrt{\lambda_{\min}(S)}}{\sqrt{\beta}\|A\|}\ge \tau_0,
\]
and hence $\tau_1=\tau_0$. Repeating the same argument inductively, we obtain $\tau_k=\tau_0$  $\forall k\ge 1$. Consequently, $\sigma_k=\beta\tau_k=\beta\tau_0$ $\forall k\ge 1$, and therefore noting $0<\mu<1<\psi$, we have
\[
\tau_k\sigma_k\|A\|^2=\beta\tau_0^2\|A\|^2\le \mu^2\lambda_{\min}(S)<\psi\lambda_{\min}(S),
\]
which is precisely the step-size condition required in \eqref{eq:GrpADMM}.
\end{remark}

\begin{remark}\label{connec_with_soe et al._paper}
In \cite[Algorithm~2]{soe2026golden}, the authors introduced a decreasing step-size algorithm for a special case of \eqref{eq:model}, namely when $B=-I$ and $b=0$. In addition, when $S=I$ and $T=0$, Algorithm~\ref{alg:1} simplifies to the method studied in \cite{soe2026golden}. Thus, Algorithm~\ref{alg:1} can be regarded as a natural extension of the algorithm proposed in \cite{soe2026golden}.
\end{remark}

\begin{lemma}\label{lem:energy}
Under Assumption \ref{assump:basic}, let $\{(u_k,x_k,w_k,y_k,\tau_k)\}_{k\ge1}$ be generated by Algorithm~\ref{alg:1}. Let $(\bar x,\bar w)$ be any solution of \eqref{eq:model}. Then, there exists a natural number $k_1$ such that, for all $k\geq k_1$ and for any $y\in\mathbb{R}^m$, the following holds.
\begin{align}\label{eq:step-ineq-k}
2\tau_k\!\left(\mathbb{L}(x_k,w_k,y)-\Phi^\star\right)
&\le \frac{\psi}{\psi-1}\big(\|u_{k+1}-\bar x\|_{S}^{2}-\|u_{k+2}-\bar x\|_{S}^{2}\big) + \big(\|\bar w-w_{k-1}\|_{T_{k-1}}^{2}\nonumber\\&
\quad -\|\bar w-w_k\|_{T_k}^{2}\big)
       + \frac{1}{\beta}\big(\|y-y_{k-1}\|^{2}
      -\|y-y_k\|^{2}\big) - \|w_k-w_{k-1}\|_{T_k}^{2}\nonumber\\&
     \quad -\frac{1}{\beta}(1-\mu)\|y_k-y_{k-1}\|^{2}
      -\frac{\psi\tau_k}{\tau_{k-1}}\|x_k-u_{k+1}\|_{S}^{2}.
\end{align}
\end{lemma}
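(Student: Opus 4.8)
The plan is to begin from the first-order optimality conditions of the two inner subproblems and convert the resulting subgradient inclusions, via convexity, into an upper bound for the primal--dual gap $\mathbb{L}(x_k,w_k,y)-\Phi^\star$ that is built entirely out of weighted squared distances amenable to telescoping. Because \eqref{eq:grp_x} minimizes the \emph{non-augmented} Lagrangian $\mathbb{L}$, its optimality condition is the clean inclusion $-A^\top y_{k-1}-\tfrac{1}{\tau_{k-1}}S(x_k-u_k)\in\partial g(x_k)$, while the optimality of \eqref{eq:grp_w} together with the dual step \eqref{eq:grp_y} (which absorbs the penalty term into $y_k$) yields $-B^\top y_k-T(w_k-w_{k-1})\in\partial f(w_k)$. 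Writing the subgradient inequalities at the reference solution $(\bar x,\bar w)$, adding $\langle y,\,Ax_k+Bw_k-b\rangle$, and using $A\bar x+B\bar w=b$ to split the residual as $Ax_k+Bw_k-b=A(x_k-\bar x)+B(w_k-\bar w)$, I would arrive at a master estimate of the schematic form
\[
\mathbb{L}(x_k,w_k,y)-\Phi^\star \;\le\; \langle y-y_k,\,r_k\rangle+\langle y_k-y_{k-1},\,A(x_k-\bar x)\rangle-\tfrac{1}{\tau_{k-1}}\langle S(x_k-u_k),\,x_k-\bar x\rangle-\langle T(w_k-w_{k-1}),\,w_k-\bar w\rangle,
\]
where $r_k:=Ax_k+Bw_k-b$. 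Every subsequent manipulation is carried out after multiplying this inequality through by $2\tau_k$.

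The three ``diagonal'' terms are converted into telescoping differences using Lemma~\ref{lemm:cosrule}. For the dual term, $r_k=\tfrac{1}{\beta\tau_k}(y_k-y_{k-1})$ and the identity \eqref{eq:cosrule:fourpoint} with $P=I$ give $2\tau_k\langle y-y_k,\,r_k\rangle=\tfrac1\beta(\|y-y_{k-1}\|^2-\|y-y_k\|^2-\|y_k-y_{k-1}\|^2)$, which already supplies the dual telescoping and a full negative copy of $\tfrac1\beta\|y_k-y_{k-1}\|^2$. For the $x$-term I would first apply \eqref{eq:cosrule:fourpoint} to $\langle S(x_k-u_k),\,x_k-\bar x\rangle$ and then feed the golden-ratio relation \eqref{eq:grp_u}, written as $u_{k+1}=(1-\theta)u_k+\theta x_k$ with $\theta=\tfrac{\psi-1}{\psi}$, into the convex-combination identity \eqref{eq:cosrule:convexcomb}; together with $x_k-u_{k+1}=\tfrac1\psi(x_k-u_k)$ this produces a telescoping in $\|u_\bullet-\bar x\|_S^2$ and an explicit negative multiple of $\|x_k-u_{k+1}\|_S^2$ held in reserve. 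The $w$-term is the simplest: \eqref{eq:cosrule:fourpoint} turns $-\langle T(w_k-w_{k-1}),\,w_k-\bar w\rangle$ into the telescoping $\|\bar w-w_{k-1}\|_{T_k}^2-\|\bar w-w_k\|_{T_k}^2$ minus $\|w_k-w_{k-1}\|_{T_k}^2$ in the effective metric $T_k$ carried by the $w$-subproblem, and the monotonicity $\tau_k\le\tau_{k-1}$ (Remark~\ref{Rmk_tau_k_bdd_below}) permits replacing the incoming weight $T_k$ by $T_{k-1}$.

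The decisive step is the coupling term $2\tau_k\langle y_k-y_{k-1},\,A(x_k-\bar x)\rangle$, and this is where the step-size rule \eqref{eq:steps tau_k} enters. I would rewrite $A(x_k-\bar x)$ so that the increment $A(x_k-x_{k-1})$ --- the only quantity the rule controls --- appears, and invoke \eqref{eq:steps tau_k} in the equivalent form $\sqrt\beta\,\tau_k\|Ax_k-Ax_{k-1}\|\le\mu\sqrt{\lambda_{\min}(S)}\,\|x_k-x_{k-1}\|$, whence $\beta\tau_k^2\|A(x_k-x_{k-1})\|^2\le\mu^2\|x_k-x_{k-1}\|_S^2$ using $\lambda_{\min}(S)\|x_k-x_{k-1}\|^2\le\|x_k-x_{k-1}\|_S^2$. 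A single application of Young's inequality then bounds this term by $\tfrac{\mu}{\beta}\|y_k-y_{k-1}\|^2+\mu\|x_k-x_{k-1}\|_S^2$; the first piece combines with the dual term's $-\tfrac1\beta\|y_k-y_{k-1}\|^2$ to give exactly the stated $-\tfrac1\beta(1-\mu)\|y_k-y_{k-1}\|^2$ (meaningful since $\mu<\tfrac\psi2<1$), while the second is charged against the reserved golden-ratio term, leaving $-\tfrac{\psi\tau_k}{\tau_{k-1}}\|x_k-u_{k+1}\|_S^2$.

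I expect the main obstacle to be precisely this reconciliation: reducing the coupling to the controllable increment $A(x_k-x_{k-1})$ and then matching the surviving constants against the golden-ratio bookkeeping so that the coefficients collapse to the clean values $\tfrac{\psi}{\psi-1}$ and $\tfrac{\psi\tau_k}{\tau_{k-1}}$ displayed in \eqref{eq:step-ineq-k}. This is also where the threshold index $k_1$ is required: since $(\tau_k)$ is non-increasing and bounded below by $\underline\tau>0$ (Remark~\ref{Rmk_tau_k_bdd_below}), for $k\ge k_1$ the step-size ratios have settled enough that the positive telescoping weight can be taken as the constant $\tfrac{\psi}{\psi-1}$ and the displayed index shift to $\|u_{k+1}-\bar x\|_S^2-\|u_{k+2}-\bar x\|_S^2$ is legitimate. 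Verifying this stabilization carefully, rather than the individual algebraic identities, is the technical heart of the lemma.
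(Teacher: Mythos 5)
There is a genuine gap, and it sits exactly where you locate ``the main obstacle'': the coupling term. Your master estimate, obtained by testing the two subgradient inequalities only at the reference solution $(\bar x,\bar w)$, produces the coupling $2\tau_k\langle y_k-y_{k-1},\,A(x_k-\bar x)\rangle$, and this quantity is \emph{not} controllable by the step-size rule \eqref{eq:steps tau_k}, which only relates $\|A(x_k-x_{k-1})\|$ to $\|x_k-x_{k-1}\|$. Splitting $A(x_k-\bar x)=A(x_k-x_{k-1})+A(x_{k-1}-\bar x)$ leaves the remainder $\langle y_k-y_{k-1},\,A(x_{k-1}-\bar x)\rangle$, which neither telescopes nor is dominated by any available negative term; the alternative split $A(x_k-\bar x)=r_k-B(w_k-\bar w)$ produces $+\tfrac{2}{\beta}\|y_k-y_{k-1}\|^2$ (overwhelming the dual term's $-\tfrac1\beta\|y_k-y_{k-1}\|^2$) plus a $B$-coupling the rule says nothing about. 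Your quadratic bookkeeping also fails: after Young's inequality you propose to charge $\mu\|x_k-x_{k-1}\|_S^2$ against ``the reserved golden-ratio term,'' but that reserve is a multiple of $\|x_k-u_{k+1}\|_S^2$, which is not comparable to $\|x_k-x_{k-1}\|_S^2$; there is simply no negative $\|x_k-x_{k-1}\|_S^2$ term anywhere in your decomposition to absorb it.

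The paper's proof resolves precisely this difficulty by a structural trick you do not use: it invokes the $x$-optimality condition at \emph{two} consecutive iterations, testing the iteration-$k$ inequality at $x_{k+1}$ and the iteration-$(k+1)$ inequality at $\bar x$, and sums these with the $w$-inequality tested at $\bar w$. This crossed testing makes the coupling term come out as $\tau_k\langle y_{k-1}-y_k,\,A(x_{k+1}-x_k)\rangle$, which the rule \eqref{eq:steps tau_k} (applied at index $k+1$) bounds by $\tfrac{\mu\tau_k^2}{2\tau_{k+1}^2}\|x_{k+1}-x_k\|_S^2+\tfrac{\mu}{2\beta}\|y_k-y_{k-1}\|^2$, and the four-point identity applied to $\tfrac{\psi}{\tau_{k-1}}\langle S(x_k-u_{k+1}),\,x_{k+1}-x_k\rangle$ supplies a matching negative term $-\tfrac{\psi\tau_k}{\tau_{k-1}}\|x_{k+1}-x_k\|_S^2$ for absorption. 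This also explains two features you misread: the forward-shifted telescoping $\|u_{k+1}-\bar x\|_S^2-\|u_{k+2}-\bar x\|_S^2$ is structural (it comes from applying the golden-ratio identity \eqref{eq:gr-id-k} to $x_{k+1}$), not an asymptotic approximation valid for large $k$; and the threshold $k_1$ arises solely from $\lim_{k\to\infty}\bigl(\tfrac{\psi\tau_k}{\tau_{k-1}}-\tfrac{\mu\tau_k^2}{\tau_{k+1}^2}\bigr)=\psi-\mu>\mu>0$ (using $\tau_k\to\tau\ge\underline\tau>0$ and $\mu<\tfrac{\psi}{2}$), which lets the residual $\|x_{k+1}-x_k\|_S^2$ term be discarded for $k\ge k_1$ — not from any ``stabilization'' of the telescoping weight $\tfrac{\psi}{\psi-1}$, which is exact for every $k$.
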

\begin{proof}
From the $x$– and $w$–subproblems \eqref{eq:grp_x} and \eqref{eq:grp_w}, we have 
\begin{align}
0 &\in \partial g(x_k) + A^{\top}y_{k-1} + \frac{1}{\tau_{k-1}}S(x_k-u_k),                                    \label{eq:opt-x}\\
0 &\in \partial f(w_k) + B^{\top}y_k + T(w_k-w_{k-1}).\label{eq:opt-w}
\end{align}
By the convexity of $g$ and $f$, \eqref{eq:opt-x}–\eqref{eq:opt-w} imply that, for all $x\in\mathbb{R}^q$ and $w\in\mathbb{R}^p$
\begin{align}
g(x_k)-g(x)
&\le \langle A^{\top}y_{k-1},\,x-x_k\rangle
   +\frac{1}{\tau_{k-1}}\langle S(x_k-u_k),\,x-x_k\rangle,                                                       \label{eq:sg-1}\\
f(w_k)-f(w)
&\le \langle B^{\top}y_k,\,w-w_k\rangle
   +\langle T(w_k-w_{k-1}),\,w-w_k\rangle.                                                                       \label{eq:sg-2}
\end{align}
Again, similar to \eqref{eq:sg-1}, we have
\begin{align}\label{eq:sg-1_k+1}
g(x_{k+1})-g(x)
&\le \langle A^{\top}y_k,\,x-x_{k+1}\rangle
   +\frac{1}{\tau_{k}}\langle S(x_{k+1}-u_{k+1}),\,x-x_{k+1}\rangle.                                       
\end{align}
Plugging $x=x_{k+1}$ in \eqref{eq:sg-1}, $w=\bar w$ in \eqref{eq:sg-2}, $x=\bar x$ in \eqref{eq:sg-1_k+1}, and then adding them together by using the fact that $\Phi^\star = \Phi(\bar x, \bar w)$, and an elementary calculation yields 
\begin{align}\label{eq:ineq_1}
\Phi(x_k,w_k)-\Phi^\star
&\le\langle y_{k-1},\,A(x_{k+1}-x_k)\rangle
   + \langle y_k,\,A(\bar x-x_{k+1})+B(\bar w-w_k)\rangle\nonumber\\
&\quad + \frac{1}{\tau_{k-1}}\langle S(x_k-u_k),\,x_{k+1}-x_k\rangle
      + \frac{1}{\tau_k}\langle S(x_{k+1}-u_{k+1}),\,\bar x-x_{k+1}\rangle \nonumber\\
&\quad + \langle T(w_k-w_{k-1}),\,\bar w-w_k\rangle. 
\end{align}
Note that using $A\bar x+ B \bar w= b$, we have
\begin{align}\label{eq:derive_1}
    &\langle y_{k-1},\,A(x_{k+1}-x_k)\rangle
   + \langle y_k,\,A(\bar x-x_{k+1})+B(\bar w-w_k)\rangle\nonumber\\
   & = \langle y_{k-1},\,A(x_{k+1}-x_k)\rangle
   -\langle y_k,\,Ax_{k+1}+Bw_k-b\rangle\nonumber\\
   &=\langle y_{k-1},\,A(x_{k+1}-x_k)\rangle- \langle y_k,\, A(x_{k+1}-x_k)\rangle - \langle y_k,\, Ax_k+Bw_k-b\rangle\nonumber\\
   &=\langle y_{k-1}-y_k, \, A(x_{k+1}-x_k)\rangle- \langle y_k,\, Ax_k+Bw_k-b\rangle.
\end{align}
By substituting \eqref{eq:derive_1} into \eqref{eq:ineq_1} and then adding $\langle y,\,  A x_k + B w_k - b\rangle$ to both sides, by the facts that 
\[
\mathbb{L}(x_k,w_k,y)-\Phi^\star
=\big(\Phi(x_k,w_k)-\Phi^\star\big)+\langle y,\, A x_k + B w_k - b\rangle~~\text{and}~~x_k-u_k=\psi(x_k-u_{k+1}),
\]
we obtain
\begin{align*}
\mathbb{L}(x_k,w_k,y)-\Phi^\star
&\le\langle y_{k-1}-y_k, \, A(x_{k+1}-x_k)\rangle+\langle y- y_k,\, Ax_k+Bw_k-b\rangle\nonumber\\
&\quad + \frac{\psi}{\tau_{k-1}}\langle S(x_k-u_{k+1}),\,x_{k+1}-x_k\rangle
      + \frac{1}{\tau_k}\langle S(x_{k+1}-u_{k+1}),\,\bar x-x_{k+1}\rangle \nonumber\\
&\quad + \langle T(w_k-w_{k-1}),\,\bar w-w_k\rangle. 
\end{align*}
Moreover, from \eqref{eq:grp_y}, noting $Ax_k+Bw_k-b = \dfrac{1}{\beta\tau_k}(y_k-y_{k-1})$, we have
\begin{align}\label{eq: derive_final_ineq_1}
\mathbb{L}(x_k,w_k,y)-\Phi^\star
&\le\langle y_{k-1}-y_k, \, A(x_{k+1}-x_k)\rangle+\dfrac{1}{\beta\tau_k}\langle y-y_k, \,y_k-y_{k-1}\rangle\nonumber\\
&\quad + \frac{\psi}{\tau_{k-1}}\langle S(x_k-u_{k+1}),\,x_{k+1}-x_k\rangle
      + \frac{1}{\tau_k}\langle S(x_{k+1}-u_{k+1}),\,\bar x-x_{k+1}\rangle \nonumber\\
&\quad + \langle T(w_k-w_{k-1}),\,\bar w-w_k\rangle. 
\end{align}
By applying Lemma~\ref{lemm:cosrule}–\eqref{eq:cosrule:fourpoint} to the last four terms in the RHS of \eqref{eq: derive_final_ineq_1}, we obtain
\begin{align}\label{eq:after-fourpoint-y}
\tau_k\left(\mathbb{L}(x_k,w_k,y)-\Phi^\star\right)
&\le \tau_k\langle y_{k-1}-y_k,\,A(x_{k+1}-x_k)\rangle\nonumber\\&
   \quad + \frac{1}{2\beta}\Big(\|y-y_{k-1}\|^{2}-\|y-y_k\|^{2}-\|y_k-y_{k-1}\|^{2}\Big) \nonumber\\
&\quad + \frac{\psi\tau_k}{2\tau_{k-1}}\Big(\|x_{k+1}-u_{k+1}\|_S^2-\|x_k-u_{k+1}\|_S^2-\|x_{k+1}-x_k\|_S^2\Big)\nonumber\\
&\quad + \frac{1}{2}\Big(\|\bar x-u_{k+1}\|_S^2-\|x_{k+1}-u_{k+1}\|_S^2-\|\bar x-x_{k+1}\|_S^2\Big)\nonumber\\
&\quad + \frac{\tau_k}{2}\Big(\|\bar w-w_{k-1}\|_T^2-\|w_k-w_{k-1}\|_T^2-\|\bar w-w_k\|_T^2\Big).
\end{align}
By applying the Cauchy--Schwarz inequality, and from \eqref{eq:steps tau_k} with the fact that $S$ is positive definite (i.e., $\|v\|_{S}^2\geq \lambda_{\min}(S)\|v\|^2$), we have
\begin{align}\label{Cauchy-Schwarz_1}
\tau_k\langle y_{k-1}-y_k,\,A(x_{k+1}-x_k)\rangle
&\le \tau_k\,\|Ax_{k+1}-Ax_k\|\,\|y_{k-1}-y_k\| \nonumber\\
&= \dfrac{\tau_k}{\tau_{k+1}}\tau_{k+1}\|Ax_{k+1}-Ax_k\|\,\|y_{k-1}-y_k\| \nonumber\\
&\le \dfrac{\mu\tau_k}{\tau_{k+1}}\,\sqrt{\frac{\lambda_{\min}(S)}{\beta}}\;\|x_{k+1}-x_k\|\,\|y_{k-1}-y_k\|\nonumber\\
& \leq\dfrac{\mu\tau_k^2}{\tau_{k+1}^2}\frac{\lambda_{\min}(S)}{2}\|x_{k+1}-x_k\|^2 +\frac{\mu}{2\beta}\|y_{k-1}-y_k\|^2\nonumber\\
&\leq\dfrac{\mu\tau_k^2}{2\tau_{k+1}^2}\|x_{k+1}-x_k\|_{S}^2 +\frac{\mu}{2\beta}\|y_{k-1}-y_k\|^2.
\end{align}
Substituing \eqref{Cauchy-Schwarz_1} into \eqref{eq:after-fourpoint-y} yields
\begin{align}\label{eq:after-fourpoint-y_1}
2\tau_k\left(\mathbb{L}(x_k,w_k,y)-\Phi^\star\right)
&\le \dfrac{\mu\tau_k^2}{\tau_{k+1}^2}\|x_{k+1}-x_k\|_{S}^2 +\frac{\mu}{\beta}\|y_{k-1}-y_k\|^2\nonumber\\&
   \quad + \frac{1}{\beta}\Big(\|y-y_{k-1}\|^{2}-\|y-y_k\|^{2}-\|y_k-y_{k-1}\|^{2}\Big) \nonumber\\
&\quad + \frac{\psi\tau_k}{\tau_{k-1}}\Big(\|x_{k+1}-u_{k+1}\|_S^2-\|x_k-u_{k+1}\|_S^2-\|x_{k+1}-x_k\|_S^2\Big)\nonumber\\
&\quad + \Big(\|\bar x-u_{k+1}\|_S^2-\|x_{k+1}-u_{k+1}\|_S^2-\|\bar x-x_{k+1}\|_S^2\Big)\nonumber\\
&\quad + \tau_k\Big(\|\bar w-w_{k-1}\|_T^2-\|w_k-w_{k-1}\|_T^2-\|\bar w-w_k\|_T^2\Big).
\end{align}
By Lemma~\ref{lemm:cosrule}–\eqref{eq:cosrule:convexcomb} and \eqref{eq:grp_u}, we have
\begin{equation}\label{eq:gr-id-k}
\|x_{k+1}-\bar x\|_{S}^{2}
= \frac{\psi}{\psi-1}\,\|u_{k+2}-\bar x\|_{S}^{2}
  - \frac{1}{\psi-1}\,\|u_{k+1}-\bar x\|_{S}^{2}
  + \frac{1}{\psi}\,\|u_{k+1}-x_{k+1}\|_{S}^{2}.
\end{equation}
Using \eqref{eq:gr-id-k} into \eqref{eq:after-fourpoint-y_1} and rearranging, we obtain
\begin{align}\label{eq:step-ineq-k-2}
2\tau_k\!\left(\mathbb{L}(x_k,w_k,y)-\Phi^\star\right)
&\le \frac{\psi}{\psi-1}\big(\|u_{k+1}-\bar x\|_{S}^{2}-\|u_{k+2}-\bar x\|_{S}^{2}\big) + \tau_k\big(\|\bar w-w_{k-1}\|_{T}^{2}\nonumber\\&
-\|\bar w-w_k\|_{T}^{2}\big)
       + \frac{1}{\beta}\big(\|y-y_{k-1}\|^{2}
      -\|y-y_k\|^{2}\big)\nonumber\\&
      - \left(\frac{\psi\tau_k}{\tau_{k-1}}-\frac{\mu\tau_k^{2}}{\tau_{k+1}^{2}}\right)\|x_{k+1}-x_k\|_{S}^{2} + \left(\frac{\psi\tau_k}{\tau_{k-1}}-1-\frac{1}{\psi}\right)\|x_{k+1}-u_{k+1}\|_{S}^{2}\nonumber\\&
      - \tau_k\|w_k-w_{k-1}\|_{T}^{2}
     - \frac{1}{\beta}(1-\mu)\|y_k-y_{k-1}\|^{2}- \frac{\psi\tau_k}{\tau_{k-1}}\|x_k-u_{k+1}\|_{S}^{2}.
\end{align}
From \eqref{eq:steps tau_k}, notice that $\tau_{k}\le \tau_{k-1}$~$\forall k\geq1$.
Let $T_k:=\tau_k T$. Since $T\succ0$, we have $0\prec T_k\preceq T_{k-1}$.
By the monotonicity of weighted norms, it follows that for any $z\in\mathbb{R}^p$, $\|z\|_{T_k}^{2}\le \|z\|_{T_{k-1}}^{2}$. In particular,
\begin{equation}\label{eq:Tk-monotone}
\tau_k\|\bar w-w_{k-1}\|_{T}^{2}
=\|\bar w-w_{k-1}\|_{T_k}^{2}
\le \|\bar w-w_{k-1}\|_{T_{k-1}}^{2}.
\end{equation}
Since $(\tau_k)$ is decreasing and $\psi\in(1,\varphi]$, we have 
\begin{align}\label{eq: non-negativity}
    \frac{\psi\tau_k}{\tau_{k-1}}-1-\frac{1}{\psi} &\leq \psi-1-\frac{1}{\psi}\nonumber\\
    &\leq 0.
\end{align}
Furthermore, by Remark \ref{Rmk_tau_k_bdd_below}, we get $\lim_{k\to\infty}\dfrac{\psi\tau_k}{\tau_{k-1}}-\dfrac{\mu\tau_k^{2}}{\tau_{k+1}^{2}} = \psi-\mu>\mu$. Therefore, there exists a natural number $k_1$ such that 
\begin{equation}\label{eq: limit_1}
    \frac{\psi\tau_k}{\tau_{k-1}}-\frac{\mu\tau_k^{2}}{\tau_{k+1}^{2}}>\mu>0~~\forall k\geq k_1.
\end{equation}
Hence, for all $k\ge k_1$, combining \eqref{eq:Tk-monotone}, \eqref{eq: non-negativity} and \eqref{eq: limit_1} with \eqref{eq:step-ineq-k-2}, we derive Lemma \ref{lem:energy}.
\end{proof}
\begin{theorem}\label{thm:conv}
Let $(\bar x,\bar w,\bar y)$ be a saddle point of~$\mathbb{L}$. Under Assumption~\ref{assump:basic},
let the sequence $\{(x_k,w_k,y_k)\}_{k\ge1}$ be generated by Algorithm~\ref{alg:1}. Then
$\{(x_k,w_k,y_k)\}_{k\ge1}$ converges to a saddle point of $\mathbb{L}$.
\end{theorem}

\begin{proof}
Since $(\bar x, \bar w, \bar y)$ is a saddle point of $\mathbb{L}$, we have $\Phi^\star = \Phi(\bar x, \bar w) = \mathbb{L}(\bar x, \bar w, \bar y)$ as $A\bar x+ B\bar w=b$ and from \eqref{eq:saddle}, we obtain $\mathbb{L}(x_k,w_k, \bar y) - \Phi^\star\geq0~~\forall k$. Now, by applying Lemma~\ref{lem:energy} with $y=\bar y$, we have
\begin{equation*}
     a_{k+1}(\bar y)\leq a_k(\bar y) - b_k,~~\forall k\geq k_1,
\end{equation*}
where
\begin{align}\label{eq:a_k and b_k_kon_incr}
    a_k(\bar y)&:=\frac{\psi}{\psi-1}\|u_{k+1}-\bar x\|_{S}^{2}
     +\|\bar w-w_{k-1}\|_{T_{k-1}}^{2}
     +\frac{1}{\beta}\|\bar y-y_{k-1}\|^{2}\nonumber,\\
b_k&:=\|w_k-w_{k-1}\|_{T_k}^{2}
     +\frac{\psi\tau_k}{\tau_{k-1}}\|x_k-u_{k+1}\|_{S}^{2}+\frac{1}{\beta}(1-\mu)\|y_k-y_{k-1}\|^{2}.
\end{align}
It is easy to observe that both $(a_k)$ and $(b_k)$ are non-negative sequences since $S, T_k\succ0$ for all $k$ and $1-\mu>0$. Hence, by Lemma \ref{lem:fejer-basic},  $\lim_{k\to\infty} a_k(\bar y)$ exists and $\lim_{k\to\infty}b_k=0$. Consequently, we have 
\begin{equation}\label{eq:vanish-incr}
\|w_k-w_{k-1}\|\to0,
\qquad
\|x_k-u_{k+1}\|\to0,
\qquad
\|y_k-y_{k-1}\|\to0.
\end{equation}
Again $x_k-u_k=\psi(x_k-u_{k+1})$ implies $\|x_k-u_k\|\to0$. Since $\{a_k(\bar y)\}$ is bounded and $S,\,T_{k-1}\succ0$, we have that 
$\{u_k\}$, $\{w_k\}$ and $\{y_k\}$ are bounded sequences. Then by \eqref{eq:vanish-incr}, $\{x_k\}$ is bounded. Let 
$\{(x_{k_j},w_{k_j},y_{k_j})\}$ be any subsequence of $\{(x_k, w_k, y_k)\}$ such that $(x_{k_j},w_{k_j},y_{k_j})\to(x^\ast,w^\ast,y^\ast)$ as $j\to\infty$. Then, by running the same arguements as in \eqref{eq:sg-1} and \eqref{eq:sg-2}, we obtain
\begin{align*}
g(x_{k_j})-g(x)
&\le\langle A^\top y_{{k_j}-1},\,x-x_{k_j}\rangle+\tfrac{1}{\tau_{{k_j}-1}}\langle S(x_{k_j}-u_{k_j}),\,x-x_{k_j}\rangle,\\
f(w_{k_j})-f(w)
&\le\langle B^\top y_{k_j},\,w-w_{k_j}\rangle+\langle T(w_{k_j}-w_{k_j-1}),\,w-w_{k_j}\rangle.
\end{align*}
Passing to the limit along $k=k_j$ and using $\|x_k-u_k\|\to0$, $\|w_k-w_{k-1}\|\to0$, 
$\|y_k-y_{k-1}\|\to0$, $\lim_{k\to\infty}\tau_k\ge \underline\tau>0$, and the lower semi-continuity of $f$ and $g$, we have  
\begin{equation}\label{eq:limit-subgrad}
\begin{cases}
g(x^\ast)-g(x)\le\langle A^\top y^\ast,\,x-x^\ast\rangle &\forall x\in\mathbb{R}^q,\\[0.2em]
f(w^\ast)-f(w)\le\langle B^\top y^\ast,\,w-w^\ast\rangle &\forall w\in\mathbb{R}^p,
\end{cases}
\end{equation}
that is,
$0\in\partial g(x^\ast)+A^\top y^\ast$ and $0\in\partial f(w^\ast)+B^\top y^\ast$.
Moreover, by Remark \ref{Rmk_tau_k_bdd_below} and $y_{k_{j}}-y_{k_{j}-1}=\beta\tau_{k_j}(Ax_{k_j}+Bw_{k_j}-b)$, we obtain $A x^\ast+B w^\ast=b$.
Using \eqref{eq:limit-subgrad} and $A x^\ast+B w^\ast=b$, we have
\[
\mathbb{L}(x^\ast,w^\ast,y)\ =\ \Phi(x^\ast,w^\ast)+\langle y,\,Ax^\ast+B w^\ast-b\rangle\ =\ \Phi(x^\ast,w^\ast)
\quad \forall y,
\]
and, for all $(x,w)$
\[
\Phi(x^\ast,w^\ast)-\Phi(x,w)\ \le\ \langle y^\ast,\, A x+B w-b\rangle.
\]
This is equivalent to $\mathbb{L}(x^\ast,w^\ast,y^\ast)\ \le\ \mathbb{L}(x,w,y^\ast)$ for all $(x,w)$. Thus $(x^\ast,w^\ast,y^\ast)$ is a saddle point of $\mathbb{L}$.

Furthermore, from \eqref{eq:a_k and b_k_kon_incr}, notice that $\{a_k(y^\ast)\}$ is non-increasing and convergent. Moreover, along the subsequence $(k_j)$ used above, from \eqref{eq:vanish-incr} and the fact that $(\tau_k)$ is convergent in Remark \ref{Rmk_tau_k_bdd_below}, we have
$u_{k_j+1}\to x^\ast$, $w_{k_j}\to w^\ast$, $y_{k_j}\to y^\ast$ as $j\to\infty$. Thus
\[
\lim_{j\to\infty} a_{k_j}(y^\ast)
=0.
\]
Since $\lim_{k\to\infty} a_k(y^\ast)$ exists and a subsequence
converges to $0$, necessarily $\lim_{k\to\infty} a_k(y^\ast)=0$.
Therefore
\[
\|u_{k+1}-x^\ast\|_S\to0,\qquad
\|w_k-w^\ast\|_{T_k}\to0,\qquad
\|y_k-y^\ast\|\to0.
\]
Using \eqref{eq:vanish-incr}, we also obtain $\|x_k-x^\ast\|\to0$. Furthermore, 
\begin{equation*}
    \|w_k-w^*\|^2_{\bar T} \le \|w_k-w^*\|^2_{T_k} + \|T_k-\bar T\|\|w_k-w^*\|^2.
\end{equation*}
Since $\|w_k-w^\ast\|_{T_k}\to0$,  $\lim_{k\to\infty}T_k=\lim_{k\to\infty}\tau_kT=\bar T\geq \underline{\tau}T\succ0$ and $\{w_k\}$ is bounded, we have $\|w_k-w^\ast\|\to0$ as $k\to\infty$. Therefore, the sequence $\{(x_k,w_k,y_k)\}$ converges to the saddle point $(x^\ast,w^\ast,y^\ast)$ of $\mathbb{L}$. This completes the proof.
\end{proof}
\subsection{Sublinear Rate of Convergence}
In this section, we derive the sublinear rate result for Algorithm \ref{alg:1}.

\begin{theorem}[Ergodic $\mathcal{O}(1/N)$ sublinear rate]\label{thm:ergodic-image}
Let $\{(u_k,x_k,w_k,y_k,\tau_k)\}_{k\ge1}$ be generated by Algorithm~\ref{alg:1}.
Assume that $(x^\star, w^\star, y^\star)$ is a saddle point of $\mathbb{L}$ and $c\geq2\|y^\star\|$ for some $c>0$. Then, there exists $C_1>0$ such that
\[
\big|\Phi(\tilde x_N,\tilde w_N)-\Phi^\star\big|\le \frac{C_1}{N'}~~~\text{and}~~~
\|A\tilde x_N+B\tilde w_N-b\|\le \frac{2C_1}{c\,N'},
\]
where
\begin{equation}\label{eq: ergodic_x_k_w_k}
\tilde x_N:=\frac{1}{N'}\sum_{k=k_1}^{N}x_k,\quad
\tilde w_N:=\frac{1}{N'}\sum_{k=k_1}^{N}w_k,~~~\text{and}~~~
N':=N-k_1+1.    
\end{equation}
\end{theorem}

\begin{proof}
Let $y\in\mathbb{R}^m$ be arbitrary. From \eqref{eq:step-ineq-k}, since the three terms
$-\|w_k-w_{k-1}\|_{T_k}^{2}$, $-\tfrac{1}{\beta}(1-\mu)\|y_k-y_{k-1}\|^{2}$, and
$-\tfrac{\psi\tau_k}{\tau_{k-1}}\|x_k-u_{k+1}\|_{S}^{2}$ are nonpositive, discarding them gives
\begin{align}\label{telescope_eqn}
 2\tau_k\big(\mathbb{L}(x_k,w_k,y)-\Phi^\star\big)&\le
\frac{\psi}{\psi-1}\big(\|u_{k+1}-x^\star\|_{S}^{2}-\|u_{k+2}-x^\star\|_{S}^{2}\big)
+\big(\|w^\star-w_{k-1}\|_{T_{k-1}}^{2}\nonumber\\&
-\|w^\star-w_k\|_{T_k}^{2}\big)
+\frac{1}{\beta}\big(\|y-y_{k-1}\|^{2}-\|y-y_k\|^{2}\big)~~\forall k\ge k_1.
\end{align}
Summing over $k=k_1,\dots,N$, telescopes the right-hand side of \eqref{telescope_eqn} to
\begin{equation}\label{eq:Theta-sum}
\sum_{k=k_1}^{N} 2\tau_k\big(\mathbb{L}(x_k,w_k,y)-\Phi^\star\big)\ \le\ 
\Delta_{k_1}(y),
\end{equation}
where
\[
\Delta_{k_1}(y):=\frac{\psi}{\psi-1}\|u_{k_1+1}-x^\star\|_{S}^{2}
+\|w^\star-w_{k_1-1}\|_{T_{k_1-1}}^{2}
+\frac{1}{\beta}\|y-y_{k_1-1}\|^{2}.
\]
By Remark \ref{Rmk_tau_k_bdd_below}, we have
$\tau_k\ge \underline\tau$. Thus applying this to \eqref{eq:Theta-sum} yields
\[
\sum_{k=k_1}^{N}\big(\mathbb{L}(x_k,w_k,y)-\Phi^\star\big)\ \le\ \frac{\Delta_{k_1}(y)}{2\underline\tau}.
\]
By dividing $N'$ and using the convexity of $\mathbb{L}(\cdot,\cdot,y)$ in $(x,w)$ for any $y$, we obtain
\begin{equation*}
\mathbb{L}(\tilde x_N,\tilde w_N,y)-\Phi^\star \ \le\ \frac{\Delta_{k_1}(y)}{2\underline\tau\,N'}.
\end{equation*}
Since $\mathbb{L}(\tilde x_N,\tilde w_N,y)=\Phi(\tilde x_N,\tilde w_N)+\langle y,\,A\tilde x_N+B\tilde w_N-b\rangle$, we have
\begin{equation}\label{eq:phi-plus-lin}
\Phi(\tilde x_N,\tilde w_N)-\Phi^\star
+\langle y,\,A\tilde x_N+B\tilde w_N-b\rangle
\ \le\ \frac{\Delta_{k_1}(y)}{2\underline\tau\,N'}.
\end{equation}
Now, taking maximum to the both sides of \eqref{eq:phi-plus-lin} over the ball $\{y:\|y\|\le c\}$, and using
$\max_{\|y\|\le c}\langle y,r\rangle=c\|r\|$ and
$\max_{\|y\|\le c}\|y-y_{k_1-1}\|^2\le(\|y_{k_1-1}\|+c)^2$ yields
\begin{equation}\label{eq:24-analogue}
\Phi(\tilde x_N,\tilde w_N)-\Phi^\star + c\|A\tilde x_N+B\tilde w_N-b\|
\ \le\ \frac{C_1}{N'},
\end{equation}
where
\[
C_1:=\frac{1}{2\underline\tau}\left(
\frac{\psi}{\psi-1}\|u_{k_1+1}-x^\star\|_{S}^{2}
+\|w^\star-w_{k_1-1}\|_{T_{k_1-1}}^{2}
+\frac{1}{\beta}\,(\|y_{k_1-1}\|+c)^2
\right).
\]
Therefore, from \eqref{eq:24-analogue}, we have
\begin{equation}\label{eq:25-analogue}
\Phi(\tilde x_N,\tilde w_N)-\Phi^\star \ \le\ \frac{C_1}{N'}.
\end{equation}
Since $(x^\star,w^\star,y^\star)$ is a saddle point
\begin{align*}
\Phi^\star=\mathbb{L}(x^\star,w^\star,y^\star)\ \le\ \mathbb{L}(\tilde x_N,\tilde w_N,y^\star)
&=\Phi(\tilde x_N,\tilde w_N)+\langle y^\star,\,A\tilde x_N+B\tilde w_N-b\rangle\nonumber\\&
\ \le\ \Phi(\tilde x_N,\tilde w_N)+\|y^\star\|\,\|A\tilde x_N+B\tilde w_N-b\|.
\end{align*}
As $c\ge 2\|y^\star\|$, we have $\Phi^\star-\Phi(\tilde x_N,\tilde w_N)\le \tfrac{c}{2}\|A\tilde x_N+B\tilde w_N-b\|$.
Combining this result with \eqref{eq:24-analogue} and~\eqref{eq:25-analogue} gives
\[
\|A\tilde x_N+B\tilde w_N-b\|
\ \le\ \frac{2C_1}{cN'}~~~~\text{and}\quad
\big|\Phi(\tilde x_N,\tilde w_N)-\Phi^\star\big|\ \le\ \frac{C_1}{N'}.
\]
Thus, we obtain the desired result.
\end{proof}

\subsection{Pointwise rate of convergence}
In this subsection, we establish the \emph{pointwise} convergence rate for Algorithm~\ref{alg:1}. In contrast to the ergodic convergence rate proved in Theorem~\ref{thm:ergodic-image}, which is based on the averaged iterates, the result below concerns the \emph{actual iterates} generated by Algorithm \ref{alg:1}. More precisely, we show that among the first $N$ iterates, there exists at least one iterate whose primal feasibility residual and dual optimality residuals are of order $\mathcal{O}(1/\sqrt{N})$. This provides the standard best-iterate pointwise complexity bound for ADMM-type methods in the convex setting; see \cite{adona2019iteration,he2015non,gonccalves2018pointwise} for more details.

To facilitate this, for each $k\ge 1$, we define the primal feasibility residual
\[
r_k^p:=Ax_k+Bw_k-b.
\]
Moreover, using the optimality conditions \eqref{eq:opt-x}--\eqref{eq:opt-w}, we have
\begin{align}
\eta_k^x&:= -\frac{1}{\tau_{k-1}}S(x_k-u_k)+A^\top(y_k-y_{k-1}),
\label{eq:def-etax}\\
\eta_k^w&:= -T(w_k-w_{k-1}).
\label{eq:def-etaw}
\end{align}
This is equivalent to $\eta_k^x\in \partial g(x_k)+A^\top y_k$
and $\eta_k^w\in \partial f(w_k)+B^\top y_k.$ Accordingly, we define the \emph{pointwise residual measure} (prm)
\begin{equation*}
\mathcal{R}_k:=
\|r_k^p\|^2
+\operatorname{dist}^2\!\bigl(0,\partial g(x_k)+A^\top y_k\bigr)
+\operatorname{dist}^2\!\bigl(0,\partial f(w_k)+B^\top y_k\bigr).
\end{equation*}

\begin{theorem}\label{thm:pointwise_alg1_clean}
Let $(x^\star,w^\star,y^\star)$ be a saddle point of $\mathbb{L}$, and let
$\{(u_k,x_k,w_k,y_k,\tau_k)\}_{k\ge1}$ be generated by Algorithm~\ref{alg:1}.
Let $k_1\in\mathbb{N}$ be the natural number given by Lemma~\ref{lem:energy}, and set
\[
A_0:=a_{k_1}(y^\star),\qquad
N':=N-k_1+1.
\]
Then, for every $N\ge k_1$, there exists an index $j_N\in\{k_1,\dots,N\}$ such that
\begin{align}
\mathbb{L}(x_{j_N},w_{j_N},y^\star)-\Phi^\star
&\le \frac{A_0}{2\underline{\tau}\,N'},\label{eq:pointwise-gap-clean}\\
\|r_{j_N}^p\|
&\le
\sqrt{\frac{A_0}{\beta\underline{\tau}^2(1-\mu)\,N'}},\label{eq:pointwise-rp-clean}\\
\operatorname{dist}\bigl(0,\partial f(w_{j_N})+B^\top y_{j_N}\bigr)
&\le
\sqrt{\frac{A_0\|T\|}{\underline{\tau}\,N'}},\label{eq:pointwise-distf-clean}\\
\operatorname{dist}\bigl(0,\partial g(x_{j_N})+A^\top y_{j_N}\bigr)
&\le
\sqrt{
\frac{1}{N'}
\left(
\frac{2A_0\psi\|S\|\tau_0}{\underline{\tau}^3}
+
\frac{2A_0\beta\|A\|^2}{1-\mu}
\right)}.\label{eq:pointwise-distg-clean}
\end{align}

Consequently, there exists a constant $C_{\rm prm}>0$ such that $\mathcal R_{j_N}\le \frac{C_{\rm prm}}{N'},$ where
\begin{equation}\label{eq:def-cprm-pointwise}
C_{\rm prm}:=
A_0\left(
\frac{1}{\beta\underline{\tau}^2(1-\mu)}
+\frac{\|T\|}{\underline{\tau}}
+\frac{2\psi\|S\|\tau_0}{\underline{\tau}^3}
+\frac{2\beta\|A\|^2}{1-\mu}
\right).
\end{equation}
\end{theorem}

\begin{proof}
Since $(x^\star,w^\star,y^\star)$ is a saddle point of $\mathbb{L}$, we have
$\mathbb{L}(x_k,w_k,y^\star)-\Phi^\star\ge0$ for all $k\ge1$. Hence, by Lemma~\ref{lem:energy}
with $y=y^\star$, for all $k\ge k_1$,
\begin{equation}\label{eq:pw-clean-1}
2\tau_k\bigl(\mathbb{L}(x_k,w_k,y^\star)-\Phi^\star\bigr)
+\|w_k-w_{k-1}\|_{T_k}^2
+\frac{1-\mu}{\beta}\|y_k-y_{k-1}\|^2
+\frac{\psi\tau_k}{\tau_{k-1}}\|x_k-u_{k+1}\|_S^2
\le a_k(y^\star)-a_{k+1}(y^\star),
\end{equation}
where 
\begin{equation*}
a_{k}(y^\star)
=
\frac{\psi}{\psi-1}\|u_{k+1}-x^\star\|_S^2
+\|w^\star-w_{k-1}\|_{T_{k-1}}^2
+\frac1\beta\|y^\star-y_{k-1}\|^2.
\end{equation*}
By applying Remark \ref{Rmk_tau_k_bdd_below} and using $(\tau_k)$ is a decreasing sequence, we obtain $\frac{\psi\tau_k}{\tau_{k-1}}\ge \frac{\psi\underline\tau}{\tau_0}~\forall k$. Now, summing \eqref{eq:pw-clean-1} from $k=k_1$ to $N$, and using Remark~\ref{Rmk_tau_k_bdd_below}, we have
\begin{align}\label{eq:pw-clean-sum}
2\underline\tau\sum_{k=k_1}^N\bigl(\mathbb{L}(x_k,w_k,y^\star)-\Phi^\star\bigr)
+\sum_{k=k_1}^N\|w_k-w_{k-1}\|_{T_k}^2
&+\frac{1-\mu}{\beta}\sum_{k=k_1}^N\|y_k-y_{k-1}\|^2\nonumber\\
+\frac{\psi\underline\tau}{\tau_0}\sum_{k=k_1}^N\|x_k-u_{k+1}\|_S^2
&\le A_0.
\end{align}
Since $1-\mu>1-\frac{\psi}{2}>0$ and $\tau_k>0~\forall k$, there exists $j_N\in\{k_1,\dots,N\}$ such that $\mathbb{L}(x_{j_N},w_{j_N},y^\star)-\Phi^\star
\le~\frac{A_0}{2\underline\tau\,N'}$. This proves \eqref{eq:pointwise-gap-clean}. Furthermore, from \eqref{eq:pw-clean-sum}, we have
\begin{align}
\|w_{j_N}-w_{j_N-1}\|_{T_{j_N}}^2
&\le \frac{A_0}{N'}, \label{eq:pw-w-est}\\
\|y_{j_N}-y_{j_N-1}\|^2
&\le \frac{A_0\beta}{(1-\mu)N'}, \label{eq:pw-y-est}\\
\|x_{j_N}-u_{j_N+1}\|_S^2
&\le \frac{A_0\tau_0}{\psi\underline\tau\,N'}. \label{eq:pw-x-est}
\end{align}
Next, from \eqref{eq:grp_y},
$r_k^p=\frac{1}{\beta\tau_k}(y_k-y_{k-1})$, and hence
\[
\|r_{j_N}^p\|^2
=
\frac{1}{\beta^2\tau_{j_N}^2}\|y_{j_N}-y_{j_N-1}\|^2
\le
\frac{1}{\beta^2\underline\tau^2}\|y_{j_N}-y_{j_N-1}\|^2.
\]
Therefore using \eqref{eq:pw-y-est}, we obtain \eqref{eq:pointwise-rp-clean}. Further, since $\eta_{j_N}^w\in\partial f(w_{j_N})+B^\top y_{j_N}$, we have
\[
\operatorname{dist}\bigl(0,\partial f(w_{j_N})+B^\top y_{j_N}\bigr)
\le \|\eta_{j_N}^w\|.
\]
By \eqref{eq:def-etaw} and $T_{j_N}=\tau_{j_N}T$
\begin{align*}
\|\eta_{j_N}^w\|^2
=
\|T(w_{j_N}-w_{j_N-1})\|^2
&\le
\|T\|\,\|w_{j_N}-w_{j_N-1}\|_T^2\nonumber\\
&=
\frac{\|T\|}{\tau_{j_N}}\|w_{j_N}-w_{j_N-1}\|_{T_{j_N}}^2
\le
\frac{\|T\|}{\underline\tau}\|w_{j_N}-w_{j_N-1}\|_{T_{j_N}}^2.
\end{align*}
Combining this with \eqref{eq:pw-w-est} gives \eqref{eq:pointwise-distf-clean}. Finally, since $\eta_{j_N}^x\in\partial g(x_{j_N})+A^\top y_{j_N}$
\[
\operatorname{dist}\bigl(0,\partial g(x_{j_N})+A^\top y_{j_N}\bigr)
\le \|\eta_{j_N}^x\|.
\]
Using \eqref{eq:def-etax}, the identity $x_{j_N}-u_{j_N}=\psi(x_{j_N}-u_{j_N+1})$, and the inequality
$(a+b)^2\le 2a^2+2b^2$, we obtain
\begin{align*}
\|\eta_{j_N}^x\|^2
&=
\left\|-\frac{1}{\tau_{j_N-1}}S(x_{j_N}-u_{j_N})+A^\top(y_{j_N}-y_{j_N-1})\right\|^2\\
&\le
\frac{2\psi^2}{\tau_{j_N-1}^2}\|S(x_{j_N}-u_{j_N+1})\|^2
+
2\|A\|^2\|y_{j_N}-y_{j_N-1}\|^2\\
&\le
\frac{2\psi^2\|S\|}{\underline\tau^2}\|x_{j_N}-u_{j_N+1}\|_S^2
+
2\|A\|^2\|y_{j_N}-y_{j_N-1}\|^2.
\end{align*}
Now applying \eqref{eq:pw-x-est} and \eqref{eq:pw-y-est} yields
\[
\|\eta_{j_N}^x\|^2
\le
\frac{1}{N'}
\left(
\frac{2A_0\psi\|S\|\tau_0}{\underline\tau^3}
+
\frac{2A_0\beta\|A\|^2}{1-\mu}
\right),
\]
which proves \eqref{eq:pointwise-distg-clean}. Moreover, since
\[
\mathcal R_{j_N}
=
\|r_{j_N}^p\|^2
+\operatorname{dist}^2\bigl(0,\partial g(x_{j_N})+A^\top y_{j_N}\bigr)
+\operatorname{dist}^2\bigl(0,\partial f(w_{j_N})+B^\top y_{j_N}\bigr),
\]
adding the three derived estimates \eqref{eq:pointwise-rp-clean}--\eqref{eq:pointwise-distg-clean} yields $\mathcal R_{j_N}\le \frac{C_{\rm prm}}{N'},$ where $C_{\rm prm}$ is given by \eqref{eq:def-cprm-pointwise}. This completes the proof.
\end{proof}

\subsection{A new extended GrpADMM algorithm}

In this section, we extend the admissible range of the parameter $\psi$ from $(1,\varphi]$ to $(1,1+\sqrt{3})$. This extension may be advantageous in practice, since a larger value of $\psi$ allows the iterates $x_k$ and $u_k$ to remain closer during the iterations.

\begin{algorithm}[H]
\caption{Extended \ref{eq:GrpADMM} with decreasing step-size for solving \eqref{eq:model}}\label{alg:extened psi}
\KwIn{Let $S\in\mathbb{S}^{q}_{++}$ and $T\in\mathbb{S}^{p}_{++}$. Choose $x_{0}\in\mathbb{R}^{q}$, $w_{0}\in\mathbb{R}^{p}$, and $y_{0}\in\mathbb{R}^{m}$ with $u_{0}=x_{0}$. Let $\tau_0>0$, $\beta>0$, $\psi\in(1,1+\sqrt 3)$, and $0<\mu<\frac{\psi(2+2\psi-\psi^2)}{2(\psi+1)}$.}

\For{$k=1,2,\ldots$}{
  \textbf{Step 1} (Compute)
  \begin{equation*}
    u_{k}=\frac{\psi-1}{\psi}\,x_{k-1}+\frac{1}{\psi}\,u_{k-1},
  \end{equation*}
  \begin{equation*}
    x_{k}=\argmin_{x}\left\{\mathbb{L}(x,w_{k-1},y_{k-1})+\frac{1}{2\tau_{k-1}}\|x-u_{k}\|_{S}^{2}\right\}.
  \end{equation*}

  \textbf{Step 2} (Update)
  \begin{equation*}
    \tau_k=\min\left\{\tau_{k-1},\,\frac{\mu\sqrt{\lambda_{\min}(S)}}{\sqrt{\beta}\,L_k}\right\}.
  \end{equation*}

  \textbf{Step 3} (Compute)
  \begin{equation*}
    w_{k}=\argmin_{w}\left\{\mathbb{L}_{\beta\tau_k}(x_{k},w,y_{k-1})+\frac{1}{2}\|w-w_{k-1}\|_{T}^{2}\right\},
  \end{equation*}
  \begin{equation*}
    y_{k}=y_{k-1}+\beta\tau_k\big(Ax_{k}+Bw_{k}-b\big).
  \end{equation*}
}
\end{algorithm}

\begin{remark}\label{Remark on extended psi}
Note that the extension of the range of $\psi$ in Algorithm~\ref{alg:extened psi} does not affect the conclusion of Remark~\ref{Rmk_tau_k_bdd_below}. In particular, the sequence $(\tau_k)$ remains convergent and converges to a positive limit.
\end{remark}

\begin{remark}\label{Rmk:tau_fixed_case_extended}
Let $c_\psi:=\frac{\psi(2+2\psi-\psi^2)}{\psi+1}.$ If $\tau_0\le \frac{\mu\sqrt{\lambda_{\min}(S)}}{\sqrt{\beta}\|A\|}$, then, by an argument analogous to that in Remark~\ref{Rmk:tau fixed case}, we obtain $\tau_k=\tau_0$ and $\sigma_k=\beta\tau_k=\beta\tau_0$ for all $k\ge 1$. Therefore, using $0<\mu<\frac{c_{\psi}}{2}<1$, we get
\[
\tau_k\sigma_k\|A\|^2
=
\beta\tau_0^2\|A\|^2
\le
\mu^2\lambda_{\min}(S)<\mu\lambda_{\min}(S)
<c_\psi\,\lambda_{\min}(S).
\]
Hence, Algorithm~\ref{alg:extened psi} reduces to a new extended version corresponding to the \ref{eq:GrpADMM} algorithm with fixed step-size.
\end{remark}

\begin{theorem}\label{thm:conv_1}
Let $(\bar x,\bar w,\bar y)$ be a saddle point of $\mathbb{L}$. Under Assumption~\ref{assump:basic}, let $\{(x_k,w_k,y_k)\}_{k\ge1}$ be generated by Algorithm~\ref{alg:extened psi}. Then $\{(x_k,w_k,y_k)\}_{k\ge1}$ converges to a saddle point of $\mathbb{L}$.
\end{theorem}

\begin{proof}
We follow the proof of Lemma~\ref{lem:energy} up to \eqref{eq:step-ineq-k-2}. For convenience, set
$D_k:=\|x_{k+1}-u_{k+1}\|_S$, $B_k:=\|x_k-u_{k+1}\|_S$, $C_k:=\|x_{k+1}-x_k\|_S$, and $\theta_k:=\frac{\tau_k}{\tau_{k-1}}$. We claim that there exists $\bar k\in\mathbb{N}$ such that $\forall k\ge \bar k$
\begin{equation}\label{eq:key_estimate_extpsi_rewrite}
(1+\tfrac1\psi-\psi\theta_k)D_k^2+\psi\theta_k B_k^2
\ge
\frac{\psi\theta_k(1+\psi-\psi^2\theta_k)}{\psi+1}\,C_k^2.
\end{equation}
From this point, we distinguish two cases.

\smallskip
\noindent\textbf{Case 1:} Let $\psi\in(1,\varphi]$. Since $(\tau_k)$ is decreasing, $\theta_k\le 1$, and 
$1+\frac1\psi-\psi\theta_k \ge 1+\frac1\psi-\psi \ge 0$. Applying Lemma~\ref{usefullemma_3} with $p=1+\frac1\psi-\psi\theta_k$, $q=\psi\theta_k$, and noting $C_k\le D_k+B_k$, we obtain
\[
\frac{\psi\theta_k(1+\psi-\psi^2\theta_k)}{\psi+1}(D_k+B_k)^2
\le
\left(1+\frac1\psi-\psi\theta_k\right)D_k^2+\psi\theta_k B_k^2.
\]
Since $\psi\in(1,\varphi]$ and $\theta_k\le 1$, the coefficient $\frac{\psi\theta_k(1+\psi-\psi^2\theta_k)}{\psi+1}$ is nonnegative. Hence, using $C_k\le D_k+B_k$, we get \eqref{eq:key_estimate_extpsi_rewrite}.

\smallskip
\noindent\textbf{Case 2:} Let $\psi\in(\varphi,1+\sqrt3)$. By Remark~\ref{Remark on extended psi}, the sequence $(\tau_k)$ converges to a positive limit, and therefore $\theta_k\to 1$ as $k\to\infty$. Hence $\lim_{k\to\infty}\psi^2\theta_k-\psi-1 = \psi^2-\psi-1>0$. Thus there exists $\bar k\in\mathbb N$ such that $\psi^2\theta_k-\psi-1>0$ for all $k\ge \bar k$. Moreover, noting $D_k\le C_k+B_k$, and applying Lemma~\ref{usefullemma_3} with
$p=\frac{\psi\theta_k(\psi^2\theta_k-\psi-1)}{\psi+1}$ and $q=\psi\theta_k$, we get, for all $k\ge \bar k$
\[
\frac{pq}{p+q}D_k^2 \le pC_k^2+qB_k^2.
\]
Since
$\frac{pq}{p+q}=\frac{\psi^2\theta_k-\psi-1}{\psi}=\psi\theta_k-1-\frac1\psi$,
this becomes
\[
\left(\psi\theta_k-1-\frac1\psi\right)D_k^2
\le
\frac{\psi\theta_k(\psi^2\theta_k-\psi-1)}{\psi+1}C_k^2+\psi\theta_k B_k^2,
\]
which is again equivalent to \eqref{eq:key_estimate_extpsi_rewrite}. Therefore,  for all $k\ge\bar k$, combining \eqref{eq:key_estimate_extpsi_rewrite} with \eqref{eq:step-ineq-k-2}, we obtain
\begin{align}
2\tau_k\bigl(\mathbb{L}(x_k,w_k,y)-\Phi^\star\bigr)
&\le
\frac{\psi}{\psi-1}\bigl(\|u_{k+1}-\bar x\|_S^2-\|u_{k+2}-\bar x\|_S^2\bigr)
+\tau_k\bigl(\|\bar w-w_{k-1}\|_T^2-\|\bar w-w_k\|_T^2\bigr) \nonumber\\
&\quad
+\frac1\beta\bigl(\|y-y_{k-1}\|^2-\|y-y_k\|^2\bigr) \nonumber\\
&\quad
-\left(\frac{\psi\theta_k(2+2\psi-\psi^2\theta_k)}{\psi+1}
-\mu\frac{\tau_k^2}{\tau_{k+1}^2}\right)\|x_{k+1}-x_k\|_S^2 \nonumber\\
&\quad
-\tau_k\|w_k-w_{k-1}\|_T^2
-\frac{1-\mu}{\beta}\|y_k-y_{k-1}\|^2.
\label{eq:step_ineq_extpsi_final}
\end{align}
Moreover, by Remark~\ref{Remark on extended psi}, we have $\theta_k\to 1$ and $\tau_k^2/\tau_{k+1}^2\to 1$ as $k\to\infty$. Hence
\[
\lim_{k\to\infty}\frac{\psi\theta_k(2+2\psi-\psi^2\theta_k)}{\psi+1}
-\mu\frac{\tau_k^2}{\tau_{k+1}^2}
=
\frac{\psi(2+2\psi-\psi^2)}{\psi+1}-\mu.
\]
Since $0<\mu<\frac{\psi(2+2\psi-\psi^2)}{2(\psi+1)}$, it follows that $\frac{\psi(2+2\psi-\psi^2)}{\psi+1}-\mu>\mu$. Therefore, there exists $k_0\in\mathbb N$ such that
\[
\frac{\psi\theta_k(2+2\psi-\psi^2\theta_k)}{\psi+1}
-\mu\frac{\tau_k^2}{\tau_{k+1}^2}
>\mu
\quad \forall k\ge k_0.
\]
Let $k_*:=\max\{\bar k,k_0\}$. Since $(\bar x,\bar w,\bar y)$ is a saddle point of $\mathbb L$, we have $\mathbb L(x_k,w_k,\bar y)-\Phi^\star\ge 0$ for all $k\ge 1$. Also, $(\tau_k)$ is decreasing, and with $T_k:=\tau_kT$ we have $T_k\preceq T_{k-1}$. Thus, taking $y=\bar y$ in \eqref{eq:step_ineq_extpsi_final}, we obtain for all $k\ge k_*$,
$a_{k+1}(\bar y)\le a_k(\bar y)-\Delta_k$,
where
\begin{align*}
a_k(\bar y)
&:=
\frac{\psi}{\psi-1}\|u_{k+1}-\bar x\|_S^2
+\|\bar w-w_{k-1}\|_{T_{k-1}}^2
+\frac1\beta\|\bar y-y_{k-1}\|^2, \\
\Delta_k
&:=
\mu\|x_{k+1}-x_k\|_S^2
+\|w_k-w_{k-1}\|_{T_k}^2
+\frac{1-\mu}{\beta}\|y_k-y_{k-1}\|^2.
\end{align*}
Hence $\{a_k(\bar y)\}$ is non-increasing, and by Lemma~\ref{lem:fejer-basic}, we get $\sum_{k=k_*}^\infty \|x_{k+1}-x_k\|_S^2<\infty$. It remains to prove that $\|x_k-u_k\|_S\to 0$. Using \eqref{eq:grp_u}, we have
$x_{k+1}-u_{k+1}=\frac1\psi(x_k-u_k)+(x_{k+1}-x_k)$.
Therefore, for any $\varepsilon>0$, Young's inequality yields
\[
\|x_{k+1}-u_{k+1}\|_S^2
\le
\frac{1+\varepsilon}{\psi^2}\|x_k-u_k\|_S^2
+
\left(1+\frac1\varepsilon\right)\|x_{k+1}-x_k\|_S^2.
\]
Now choose $\varepsilon>0$ such that $\frac{1+\varepsilon}{\psi^2}<1$, which is possible since $\psi>1$. As $\sum_{k=k_*}^\infty \|x_{k+1}-x_k\|_S^2<~\infty$, Lemma~\ref{lemm: sum un and vn} gives $\sum_{k=1}^\infty \|x_k-u_k\|_S^2<\infty$, and hence $\|x_k-u_k\|_S\to 0$. The remainder of the convergence argument is the same as in the final part of the proof of Theorem~\ref{thm:conv}.
\end{proof}
\section{A non-decreasing step-size strategy to solve \eqref{eq:model}}\label{sec_5}
This section is devoted to proposing a non-decreasing step-size rule for a simple modification of \ref{eq:GrpADMM} algorithm, which can efficiently solve \eqref{eq:model} without requiring prior knowledge of the norm of the operator $A$ or any complicated hyperparameter tuning. The algorithm is formally written as follows.

\begin{algorithm}[H]
\caption{A non-decreasing step-size rule for \ref{eq:GrpADMM} to solve \eqref{eq:model}}\label{alg:2}
\KwIn{Let $S\in\mathbb{S}^{q}_{++}$, $T\in\mathbb{S}^{p}_{++}$. Choose $x_{0}\in\mathbb{R}^{q}$, $w_{0}\in\mathbb{R}^{p}$, $y_{0}\in\mathbb{R}^{m}$ with $u_0=x_0$.  Let $\tau_0>0$, $\beta>0$, $\psi\in(1,\varphi)$,  $\rho\in[1,1/\psi+1/\psi^2]$, and $0<r_1<r<\frac{\rho}{2}$. Let $(\xi_k)$ be a sequence such that $\rho+\xi_k>1~\forall k$ and $\sum_{k=1}^\infty \log(\rho+\xi_k)<+\infty.$ Set $\bar\lambda = \lambda_{\min}(S)$.}

\For{$k=1,2,\ldots$}{
  \textbf{Step 1} (Compute)
  \begin{equation*}
    u_{k}=\frac{\psi-1}{\psi}\,x_{k-1}+\frac{1}{\psi}\,u_{k-1},
  \end{equation*}
    \begin{equation*}
    x_{k}=\argmin_{x}\!\left\{\mathbb{L}(x,w_{k-1},y_{k-1})+\frac{1}{2\tau_{k-1}}\|x-u_{k}\|_{S}^{2}\right\}.
  \end{equation*}

  \textbf{Step 2} (Update)
\begin{equation}\label{eq:steps_alg_2_tau_k}
      \tau_k = \begin{cases}
       \frac{r_1 \sqrt{\bar\lambda}}{\sqrt{\beta}L_k}, & \text{if}~~\tau_{k-1}L_k>\frac{r\sqrt{\bar\lambda}}{\sqrt{\beta}}\\
        (\rho+\xi_{k-1})\,\tau_{k-1}, & \text{otherwise}
      \end{cases},\qquad\sigma_k =\beta\tau_k
\end{equation}

  \textbf{Step 3} (Compute)
 \begin{equation}\label{eq:alg_2_grp_w}
    w_{k}=\argmin_{w}\!\left\{\mathbb{L}_{\sigma
    _k}(x_{k},w,y_{k-1})+\frac{1}{2\sigma_k}\|w-w_{k-1}\|_{T}^{2}\right\},
  \end{equation}
  \begin{equation*}
    y_{k}=y_{k-1}+\sigma
    _k\big(Ax_{k}+Bw_{k}-b\big).
  \end{equation*}
}
\end{algorithm}

A few comments regarding Algorithm \ref{alg:2} are in order.
\begin{remark}\label{rmk_3}
    Note that for theoretical guarantee of convergence, \eqref{psithetanminus1} should hold and for that, we need $1<\frac{1}{\psi}+\frac{1}{\psi^2}$. Hence, in practice, we can take $\psi$ closer to the golden ratio ($\varphi$) instead of taking $\psi=\varphi$, so that the above assertion holds. Furthermore, note that in the $w$–update (see \eqref{eq:alg_2_grp_w}), the proximal term is scaled by $\sigma_k^{-1}$. This scaling is deliberate as it produces the telescoping term $\|\bar w-w_{k-1}\|_{T}^{2}-\|\bar w-w_k\|_{T}^{2}$ in Lemma~\ref{lemma_2}, and hence the sequence $\{\|w_k-\bar w\|_{T}\}$ become F\'ejer monotone combined with the other energy terms.

\end{remark}
\begin{remark}\label{remark_1}
Let $\tau_{\min}:=\min\left\{\tau_0,\frac{\sqrt{\bar\lambda}\,r_1}{\sqrt{\beta}\,\|A\|}\right\}.$ Then the sequence $(\tau_k)$ generated by Algorithm~\ref{alg:2} satisfies $\tau_k\ge \tau_{\min}~~ \forall k\ge 0.$ Indeed, since $L_k\le \|A\|,$ the first branch of \eqref{eq:steps_alg_2_tau_k} yields
\[
\tau_k=\frac{\sqrt{\bar\lambda}\,r_1}{\sqrt{\beta}\,L_k}
\ge
\frac{\sqrt{\bar\lambda}\,r_1}{\sqrt{\beta}\,\|A\|}.
\]
On the other hand, in the second branch, since $\rho+\xi_{k-1}> 1$, we have
\[
\tau_k\ge \min\left\{\tau_{k-1},\frac{\sqrt{\bar\lambda}\,r_1}{\sqrt{\beta}\,\|A\|}\right\}.
\]
Therefore, the conclusion follows by induction.
\end{remark}

\begin{remark}\label{remark:xi_choice}
In Algorithm~\ref{alg:2}, the sequence $(\xi_k)$ is chosen so that the step-size sequence $(\tau_k)$ converges; see Remark~\ref{remark:tau_convergence}. At the same time, the steps are not decreasing, which allows the method to adapt more flexibly to the local behaviour of the operator and may improve stability when the iterates pass through relatively flat regions. The assumptions imposed on $(\xi_k)$ are nonempty. For instance, one may take
$\xi_k:=1-\rho+\frac{1}{(k+1)^a},~~a>1.$ Then
$\rho+\xi_k=1+\frac{1}{(k+1)^a}>1$ for all $k\ge 0$. Moreover, since
\[
0<\log\!\left(1+\frac{1}{(k+1)^a}\right)\le \frac{1}{(k+1)^a},
\]
and $\sum_{k=0}^\infty \frac{1}{(k+1)^a}<\infty$ for $a>1$, it follows that
\[
\sum_{k=0}^\infty \log(\rho+\xi_k)
=
\sum_{k=0}^\infty \log\!\left(1+\frac{1}{(k+1)^a}\right)
<+\infty.
\]
\end{remark}

\begin{remark}\label{remark:tau_convergence}
Next we show that the sequence $(\tau_k)$ generated by Algorithm~\ref{alg:2} converges to some $\tau^\star\in[\tau_{\min},+\infty)$. By Remark~\ref{remark_1}, we have $\tau_k\ge \tau_{\min}>0$ for all $k\ge 0$. Let
$$
d_k:=\log\tau_k-\log\tau_{k-1}, \quad k\ge 1.
$$
We claim that $d_k\le \log(\rho+\xi_{k-1}) ~~ \forall k\ge 1.$ Indeed, if the second branch in \eqref{eq:steps_alg_2_tau_k} is chosen, then
$$
d_k=\log\!\left(\frac{\tau_k}{\tau_{k-1}}\right)=\log(\rho+\xi_{k-1}).
$$
Furthermore, if the first branch is chosen, then by the condition $\tau_{k-1}L_k>\frac{r\sqrt{\bar\lambda}}{\sqrt{\beta}},$ we obtain
$$
\tau_k=\frac{r_1\sqrt{\bar\lambda}}{\sqrt{\beta}L_k}
<
\frac{r_1}{r}\tau_{k-1}.
$$
Since $0<r_1<r$, we have
$$
d_k=\log\!\left(\frac{\tau_k}{\tau_{k-1}}\right)
<
\log\!\left(\frac{r_1}{r}\right)<0<\log(\rho+\xi_{k-1}).
$$
Therefore, $d_k^+:=\max\{0,d_k\}\le \log(\rho+\xi_{k-1}),$ and thus $\sum_{k=1}^\infty d_k^+<\infty.$ On the other hand, for every $k\ge 1$
$$
\sum_{i=1}^k d_i=\log\tau_k-\log\tau_0
\ge \log\tau_{\min}-\log\tau_0.
$$
Thus the partial sums of $\sum_{k=1}^\infty d_k$ are bounded below.
Writing
$$
d_k=d_k^+-d_k^-,
\qquad
d_k^-:=-\min\{0,d_k\}\ge 0,
$$
we have
$$
\sum_{i=1}^k d_i^-=\sum_{i=1}^k d_i^+ - \sum_{i=1}^k d_i.
$$
Since $\sum_{k=1}^\infty d_k^+<\infty$, the sequence $\left(\sum_{i=1}^k d_i^+\right)$ is bounded above. Further,  $\sum_{i=1}^k d_i$ is bounded below, thus it follows that $\left(\sum_{i=1}^k d_i^-\right)$ is bounded above. Being nondecreasing, $\left(\sum_{i=1}^k d_i^-\right)$ converges, and therefore $\sum_{k=1}^\infty d_k^-<\infty$.
Consequently, $\sum_{k=1}^\infty d_k$ converges, and hence $(\log\tau_k)$ converges.
Therefore, together with Remark \ref{remark_1}$, (\tau_k)$ converges to some $\tau^\star\ge \tau_{\min}>0$.
\end{remark}

\begin{remark}\label{remark:first_case_finite}
One more thing to notice from Algorithm \ref{alg:2} is that the condition
\[
\tau_{k-1}L_k>\frac{r\sqrt{\bar\lambda}}{\sqrt{\beta}}
\]
can hold only for finitely many values of $k$. Suppose, on the contrary, that there exists a subsequence $(k_j)$ such that
\[
\tau_{k_j-1}L_{k_j}>\frac{r\sqrt{\bar\lambda}}{\sqrt{\beta}}
\qquad \forall j\ge 1.
\]
Then, by the first branch of the update rule \eqref{eq:steps_alg_2_tau_k}
\[
\tau_{k_j}=\frac{r_1\sqrt{\bar\lambda}}{\sqrt{\beta}L_{k_j}},~~\text{and hence}\quad
\frac{\tau_{k_j}}{\tau_{k_j-1}}
=\frac{r_1\sqrt{\bar\lambda}}{\sqrt{\beta}L_{k_j}\tau_{k_j-1}}
<\frac{r_1}{r}<1.
\]
Since $(\tau_k)$ converges to some $\tau^\star\ge \tau_{\min}>0$, both $(\tau_{k_j})$ and $(\tau_{k_j-1})$ converge to the same limit $\tau^\star$, and therefore
\[
\lim_{j\to\infty}\frac{\tau_{k_j}}{\tau_{k_j-1}}=1.
\]
Passing to the limit in the above inequality gives $1\le \frac{r_1}{r}<1,$ which is impossible. Hence, the stated condition can occur only finitely many times. Consequently, after some finite index, only the second branch of the update rule~\eqref{eq:steps_alg_2_tau_k} is active. We stress that this is an asymptotic property of the step-size rule. In the numerical experiments, this finite index may be very large, and the residuals may reach a given error bound before this eventual situation becomes visible in the step-size plots.
\end{remark}

\begin{lemma}\label{lemma_2}
    Let $\{(u_k,x_k,w_k,y_k,\tau_k)\}_{k\ge1}$ be generated by Algorithm~\ref{alg:2}. Let $(\bar x,\bar w)$ be any solution of \eqref{eq:model}. Then, there exists a natural number $k_5$ such that, for all $k\geq k_5$, and for any $y\in\mathbb{R}^m$, the following holds.
\begin{align}\label{eq:alg_2_lemma_1_eq_1}
2\tau_k\!\left(\mathbb{L}(x_k,w_k,y)-\Phi^\star\right)
&\le \frac{\psi}{\psi-1}\big(\|u_{k+1}-\bar x\|_{S}^{2}-\|u_{k+2}-\bar x\|_{S}^{2}\big) + \frac{1}{\beta}\big(\|\bar w-w_{k-1}\|_{T}^{2}-\|\bar w-w_k\|_{T}^{2}\big)\nonumber\\&
     \quad  + \frac{1}{\beta}\big(\|y-y_{k-1}\|^{2}
      -\|y-y_k\|^{2}\big) - \frac{1}{\beta}\|w_k-w_{k-1}\|_{T}^{2}\nonumber\\&
     \quad -\frac{1}{\beta}\left(1-\frac{\rho}{2}\right)\|y_k-y_{k-1}\|^{2}
       -\frac{\psi\tau_k}{\tau_{k-1}}\|x_k-u_{k+1}\|_{S}^{2}.
\end{align}
\end{lemma}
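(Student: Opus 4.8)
The plan is to run the derivation of Lemma~\ref{lem:energy} essentially verbatim, tracking only the two places where Algorithm~\ref{alg:2} departs from Algorithm~\ref{alg:1}: the rescaling of the proximal term in the $w$–subproblem by $\sigma_k^{-1}$, and the step-size rule \eqref{eq:steps_alg_2_tau_k} (here $\{(u_k,x_k,w_k,y_k,\tau_k)\}$ is of course the sequence produced by Algorithm~\ref{alg:2}). First I would write the first-order optimality conditions for \eqref{eq:alg_2_grp_x} and \eqref{eq:alg_2_grp_w}. The $x$–update is unchanged, so $0\in\partial g(x_k)+A^\top y_{k-1}+\tfrac{1}{\tau_{k-1}}S(x_k-u_k)$ exactly as in \eqref{eq:opt-x}. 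For the $w$–update the proximal weight is now $\tfrac{1}{2\sigma_k}\|\cdot\|_T^2$, so after substituting the $y$–update \eqref{eq:alg_2_grp_y} the condition reads $0\in\partial f(w_k)+B^\top y_k+\tfrac{1}{\sigma_k}T(w_k-w_{k-1})$; the only change from \eqref{eq:sg-2} is the extra factor $\sigma_k^{-1}=(\beta\tau_k)^{-1}$ in front of the $T$–term. I would then plug $x=x_{k+1}$, $w=\bar w$, $x=\bar x$ into the resulting subgradient inequalities, add them, and use $A\bar x+B\bar w=b$, the Lagrangian identity, the golden-ratio relation $x_k-u_k=\psi(x_k-u_{k+1})$, and $Ax_k+Bw_k-b=\sigma_k^{-1}(y_k-y_{k-1})$, reproducing the analogue of \eqref{eq:after-fourpoint-y}.

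The decisive structural point occurs when I multiply the assembled inequality through by $\tau_k$ and apply the four-point identity \eqref{eq:cosrule:fourpoint} to the quadratic cross terms. Because the $T$–term already carries the factor $(\beta\tau_k)^{-1}$, the $\tau_k$ cancels and the $w$–contribution collapses to a telescoping difference of the \emph{fixed-weight} norms $\|\bar w-w_{k-1}\|_T^2-\|\bar w-w_k\|_T^2$, rather than the $T_k=\tau_k T$–weighted differences that appear in Lemma~\ref{lem:energy}. This is precisely the mechanism advertised in Remark~\ref{rmk_3}, and it is what removes any reliance on monotonicity of $(\tau_k)$: in the increasing regime the inequality $\|z\|_{T_k}^2\le\|z\|_{T_{k-1}}^2$ used in \eqref{eq:Tk-monotone} is no longer available, so this clean telescoping is indispensable for the Fej\'er-type estimate.

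Next I would bound the cross term $\tau_k\langle y_{k-1}-y_k,\,A(x_{k+1}-x_k)\rangle$. Writing $\|Ax_{k+1}-Ax_k\|=L_{k+1}\|x_{k+1}-x_k\|$ and invoking Remark~\ref{remark_2}, for all sufficiently large $k$ the second (``otherwise'') branch of \eqref{eq:steps_alg_2_tau_k} is active, which gives the key bound $\tau_k L_{k+1}\le r\bar\lambda/\sqrt\beta$. Combining Cauchy--Schwarz with this bound, Young's inequality with a weight calibrated to the limiting ratio $\tau_k/\tau_{k-1}\to\rho$ (Remark~\ref{remark_2}), and $\lambda_{\min}(S)\|v\|^2\le\|v\|_S^2$, I obtain a splitting into a constant multiple of $\|x_{k+1}-x_k\|_S^2$ and the term $\tfrac{\rho^2}{2\beta}\|y_k-y_{k-1}\|^2$; the latter combines with the $-\tfrac{1}{\beta}\|y_k-y_{k-1}\|^2$ produced by the four-point identity to yield the stated coefficient $-\tfrac{1}{\beta}\bigl(1-\tfrac{\rho^2}{2}\bigr)$. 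Applying the convex-combination identity \eqref{eq:cosrule:convexcomb} together with \eqref{eq:alg_2_grp_u} to rewrite $\|x_{k+1}-\bar x\|_S^2$ (the analogue of \eqref{eq:gr-id-k}) then generates the telescoping term $\tfrac{\psi}{\psi-1}\bigl(\|u_{k+1}-\bar x\|_S^2-\|u_{k+2}-\bar x\|_S^2\bigr)$.

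Finally I would collect terms and, as in the passage from \eqref{eq:step-ineq-k-2} to Lemma~\ref{lem:energy}, discard the two residual quadratics in $\|x_{k+1}-x_k\|_S^2$ and $\|x_{k+1}-u_{k+1}\|_S^2$ by checking that their coefficients have the correct sign for all large $k$. The coefficient of $\|x_{k+1}-u_{k+1}\|_S^2$ is $\tfrac{\psi\tau_k}{\tau_{k-1}}-1-\tfrac1\psi$, whose limit $\psi\rho-1-\tfrac1\psi$ is nonpositive exactly because of the standing parameter condition $\rho\le\tfrac1\psi+\tfrac1{\psi^2}$ from Remark~\ref{rmk_3}; the coefficient of $\|x_{k+1}-x_k\|_S^2$ is $\tfrac{\psi\tau_k}{\tau_{k-1}}$ minus the fixed constant from Young's inequality, whose limit $\psi\rho-(\text{const})$ stays strictly positive thanks to $r<\rho/2$. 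These two limit statements, together with the threshold index $k_0$ of Remark~\ref{remark_2}, furnish the index $k_7$ beyond which both residual terms may be dropped. The main obstacle is therefore not any individual estimate but the accounting forced by the increasing step-sizes: every sign condition that held unconditionally (or with a uniform margin) in Lemma~\ref{lem:energy} now holds only asymptotically, so one must verify \emph{simultaneously} that $\rho\le\tfrac1\psi+\tfrac1{\psi^2}$ controls the $\|x_{k+1}-u_{k+1}\|_S^2$ term, that $r<\rho/2$ keeps the $\|x_{k+1}-x_k\|_S^2$ coefficient positive, and that $1-\tfrac{\rho^2}{2}>0$ keeps the $\|y_k-y_{k-1}\|^2$ term dissipative, all within the admissible window $\rho\in(1,\tfrac1\psi+\tfrac1{\psi^2}]$ with $\psi\in(1,\varphi)$.
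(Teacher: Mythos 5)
Your proposal is correct and follows essentially the same route as the paper's proof of this lemma: redo the derivation of Lemma~\ref{lem:energy} with the $\sigma_k^{-1}$-scaled proximal term in \eqref{eq:alg_2_grp_w} (so that multiplying by $\tau_k$ leaves fixed-weight, telescoping $T$-norms), bound the cross term $\tau_k\langle y_{k-1}-y_k,\,A(x_{k+1}-x_k)\rangle$ by Cauchy--Schwarz and Young's inequality using the step-size rule \eqref{eq:steps_alg_2_tau_k}, and discard the $\|x_{k+1}-x_k\|_S^2$ and $\|x_{k+1}-u_{k+1}\|_S^2$ terms via the asymptotic sign conditions supplied by Remark~\ref{remark_2}. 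The only deviations are minor: you handle the cross term in a single case (legitimate, since by Remark~\ref{remark_2} the ``otherwise'' branch is eventually always active, giving $\tau_k L_{k+1}\le r\bar\lambda/\sqrt{\beta}$) where the paper distinguishes two cases; the paper actually proves the stronger coefficient $1-\rho/2$, which implies the stated $1-\rho^2/2$ since $\rho>1$; and two of your side remarks need care---discarding the $\|x_{k+1}-u_{k+1}\|_S^2$ term requires the \emph{strict} inequality $\rho<1/\psi+1/\psi^2$ (at the boundary value the coefficient equals $\psi(\rho+\xi_{k-1})-1-1/\psi>0$ for every $k$, so a merely nonpositive limit does not suffice), exactly as the paper's Remark~\ref{rmk_3} stipulates, while the condition $1-\rho^2/2>0$ is neither needed for the lemma's inequality nor guaranteed on the admissible window (for $\psi$ close to $1$, $\rho$ may exceed $\sqrt{2}$).
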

\begin{proof}
Doing  similar calculations as in Lemma \ref{lem:energy}, from \eqref{eq:after-fourpoint-y}, we obtain
    \begin{align}\label{eq:alg_2_eq_1}
\tau_k\left(\mathbb{L}(x_k,w_k,y)-\Phi^\star\right)
&\le \tau_k\langle y_{k-1}-y_k,\,A(x_{k+1}-x_k)\rangle\nonumber\\&
   \quad + \frac{1}{2\beta}\Big(\|y-y_{k-1}\|^{2}-\|y-y_k\|^{2}-\|y_k-y_{k-1}\|^{2}\Big) \nonumber\\
&\quad + \frac{\psi\tau_k}{2\tau_{k-1}}\Big(\|x_{k+1}-u_{k+1}\|_S^2-\|x_k-u_{k+1}\|_S^2-\|x_{k+1}-x_k\|_S^2\Big)\nonumber\\
&\quad + \frac{1}{2}\Big(\|\bar x-u_{k+1}\|_S^2-\|x_{k+1}-u_{k+1}\|_S^2-\|\bar x-x_{k+1}\|_S^2\Big)\nonumber\\
&\quad + \frac{1}{2\beta}\Big(\|\bar w-w_{k-1}\|_T^2-\|w_k-w_{k-1}\|_T^2-\|\bar w-w_k\|_T^2\Big).
\end{align}
We next estimate the mixed term involving the operator $A$. By Remark~\ref{remark:first_case_finite}, the first branch in the update rule
\eqref{eq:steps_alg_2_tau_k} can occur only finitely many times. Hence, there exists
$\hat k\in\mathbb N$ such that, for every $k\ge \hat k$,
\[
    \tau_k L_{k+1}
    \le
    \frac{r\sqrt{\bar\lambda}}{\sqrt{\beta}}.
\]
Therefore, using the definition of $L_{k+1}$, the Cauchy--Schwarz inequality, the fact that $r<\frac{\rho}{2}$, and
$\|v\|_S^2\ge \bar\lambda\|v\|^2$, we obtain
\begin{align}\label{Cauchy_Schwarz_step-size_update}
\tau_k\langle A(x_k-x_{k+1}),\,y_k-y_{k-1}\rangle
&\le
\tau_k\|A x_k-Ax_{k+1}\|\,\|y_k-y_{k-1}\| \nonumber\\
&\le
\frac{\rho\sqrt{\bar\lambda}}{2\sqrt{\beta}}
\|x_k-x_{k+1}\|\,\|y_k-y_{k-1}\| \nonumber\\
&\le
\frac{\rho}{2\sqrt{\beta}}
\|x_k-x_{k+1}\|_S\,\|y_k-y_{k-1}\| \nonumber\\
&\le
\frac{\rho}{4}\|x_k-x_{k+1}\|_S^2
+\frac{\rho}{4\beta}\|y_k-y_{k-1}\|^2 ~~\forall k\ge \hat k .
\end{align}
Thus, combining \eqref{Cauchy_Schwarz_step-size_update} with
\eqref{eq:alg_2_eq_1} and then using \eqref{eq:gr-id-k}, we obtain, for all
$k\ge \hat k$
\begin{align}\label{eq:alg_2_eq_2}
2\tau_k\!\left(\mathbb{L}(x_k,w_k,y)-\Phi^\star\right)
&\le \frac{\psi}{\psi-1}\big(\|u_{k+1}-\bar x\|_{S}^{2}-\|u_{k+2}-\bar x\|_{S}^{2}\big) + \frac{1}{\beta}\big(\|\bar w-w_{k-1}\|_{T}^{2}-\|\bar w-w_k\|_{T}^{2}\big)\nonumber\\&
      \quad  + \frac{1}{\beta}\big(\|y-y_{k-1}\|^{2}
      -\|y-y_k\|^{2}\big) - \left(\frac{\psi\tau_k}{\tau_{k-1}}-\frac{\rho}{2}\right)\|x_{k+1}-x_k\|_{S}^{2}\nonumber\\&
      \quad + \left(\frac{\psi\tau_k}{\tau_{k-1}}-1-\frac{1}{\psi}\right)\|x_{k+1}-u_{k+1}\|_{S}^{2} \nonumber - \frac{1}{\beta}\|w_k-w_{k-1}\|_{T}^{2}\\&
      \quad - \frac{1}{\beta}\left(1-\frac{\rho}{2}\right)\|y_k-y_{k-1}\|^{2}- \frac{\psi\tau_k}{\tau_{k-1}}\|x_k-u_{k+1}\|_{S}^{2}.
\end{align}
From Remark \ref{remark:tau_convergence} and the definition of $\psi$, notice that
\begin{equation}\label{psithetanminus1}
    \lim_{k\to\infty}\left(\frac{\psi\tau_k}{\tau_{k-1}} - 1 - \frac{1}{\psi}\right) = \psi -1 -\frac{1}{\psi}<0.
\end{equation}
Thus, there exists a natural number $k_3$ such that
\begin{equation}\label{psitheta_k-1-1bypsi}
    \frac{\psi\tau_k}{\tau_{k-1}} - 1 - \frac{1}{\psi}<0~~\forall k\geq k_3.
\end{equation}
Furthermore, there exists another natural number $k_4$ such that
\begin{equation}\label{psitheta_k-rhoby2}
    \frac{\psi\tau_k}{\tau_{k-1}}-\frac{\rho}{2}>0~~\forall k\geq k_4.
\end{equation}
Let $k_5= \max\{\hat k, k_3, k_4\}$. Then for all $k\geq k_5$, combining \eqref{eq:alg_2_eq_2}, \eqref{psithetanminus1}, \eqref{psitheta_k-1-1bypsi} and~\eqref{psitheta_k-rhoby2}, we obtain Lemma~\ref{lemma_2}.
\end{proof}

\begin{theorem}
    Let $(\bar x,\bar w,\bar y)$ be a saddle point of $\mathbb{L}$. Under Assumption~\ref{assump:basic},
let the sequence $\{(x_k,w_k,y_k)\}_{k\ge1}$ be generated by Algorithm~\ref{alg:2}. 
\begin{enumerate}[label=(\alph*)]
    \item Then the sequence $\{(x_k,w_k,y_k)\}_{k\ge1}$ converges to a saddle point of $\mathbb{L}$.\label{first_statement}  
    \item The average ergodic sequences $\{\tilde x_N\}$ and $\{\tilde w_N\}$ satisfy
    \[
    \big|\Phi(\tilde x_N,\tilde w_N)-\Phi^\star\big|= \mathcal{O}(1/N')
    \qquad \text{and} \qquad
    \|A\tilde x_N+B\tilde w_N-b\|=\mathcal{O}(1/N'),
    \]
    where\[
\tilde x_N:=\frac{1}{N'}\sum_{k=k_5}^N x_k,\qquad
\tilde w_N:=\frac{1}{N'}\sum_{k=k_5}^N w_k,\qquad N'=N-k_5+1,
\]
and $k_5$ is given in Lemma \ref{lemma_2}.
    \label{second_statement}
\end{enumerate}
\end{theorem}
\begin{proof} of \ref{first_statement}.
    Since $(\bar x, \bar w, \bar y)$ is a saddle point of $\mathbb{L}$, we have $\mathbb{L}(x_k,w_k, \bar y) - \Phi^\star\geq0~\forall k$. Now, by applying Lemma~\ref{lemma_2} with $y=\bar y$, from \eqref{eq:alg_2_lemma_1_eq_1}, we obtain
\begin{equation*}
     a_{k+1}(\bar y)\leq a_k(\bar y) - b_k~~~\text{for all}~~k\geq k_5,
\end{equation*}
where
\begin{align*}
    a_k(\bar y)&:=\frac{\psi}{\psi-1}\|u_{k+1}-\bar x\|_{S}^{2}
     +\frac{1}{\beta}\|\bar w-w_{k-1}\|_{T}^{2}
     +\frac{1}{\beta}\|\bar y-y_{k-1}\|^{2}\nonumber,\\
b_k&:=\frac{1}{\beta}\|w_k-w_{k-1}\|_T^{2}
     +\frac{\psi\tau_k}{\tau_{k-1}}\|x_k-u_{k+1}\|_{S}^{2}+\frac{1}{\beta}\left(1-\frac{\rho}{2}\right)\|y_k-y_{k-1}\|^{2}.
\end{align*}
Since $S, T\succ 0$ and $1-\frac{\rho}{2}>0$, both $\{a_k(\bar y)\}$ and $\{b_k\}$ are non-negative sequences. Thus, by running analogous arguments as in Theorem \ref{thm:conv} and keeping in mind the facts that $(\tau_k)$ is bounded below by $\tau_{\min}>0$ and $(\tau_k)$ is convergent, we obtain that $\{(x_k, w_k,y_k)\}$ converges to a saddle point of $\mathbb{L}$.
\end{proof}
\begin{proof}of \ref{second_statement}.
It follows from Remark~\ref{remark_1} and \eqref{eq:alg_2_lemma_1_eq_1} that
\[
\tau_k \ge\tau_{\min}:= \min\left\{\tau_0,\, \frac{\sqrt{\bar\lambda} r_1}{\sqrt{\beta}\|A\|}\right\}
\quad \forall k.
\]
Therefore, by proceeding analogously to the proof of Theorem~\ref{thm:ergodic-image}, and using \eqref{eq:alg_2_lemma_1_eq_1}, we obtain the claimed sublinear convergence rates for the objective gap and the feasibility residual. This completes the proof.
\end{proof}

\begin{remark}
Define
\[
r_k^p:=Ax_k+Bw_k-b,\quad
\eta_k^x:=-\frac{1}{\tau_{k-1}}S(x_k-u_k)+A^\top(y_k-y_{k-1}),
\]
and
\[
\eta_k^w:=-\frac{1}{\sigma_k}T(w_k-w_{k-1}).
\]
Then $\eta_k^x\in \partial g(x_k)+A^\top y_k$ and
$\eta_k^w\in \partial f(w_k)+B^\top y_k$. Therefore, using Lemma~\ref{lemma_2} and the bound $\sigma_k=\beta\tau_k\ge \beta\tau_{\min}>0$, one may proceed along the same lines as in the proof of Theorem \ref{thm:pointwise_alg1_clean} to obtain the pointwise $O(1/\sqrt{N})$ convergence rate for Algorithm~\ref{alg:2}. We omit the details.
\end{remark}

\begin{remark}
It is worth noting that, in the case where $B=-I$, $b=0$, $S=I$, and $T=0$, Algorithm~\ref{alg:2} is different from the algorithm studied in \cite{soe2026golden}. In particular, Algorithm~\ref{alg:2} may be regarded as a new method for the problem considered in \cite{soe2026golden}, with the additional feature that it allows for a non-decreasing step-size rule.
\end{remark}

\section{Numerical results}\label{numerical_section}
We now present numerical experiments to assess the practical performance of the proposed strategies. We compare four algorithms: Algorithm~\ref{alg:extened psi}, Algorithm~\ref{alg:2}, GrpADMM \cite{Chen2023GRPADMM}, and PADMM \cite{Eckstein1994}. We use Algorithm \ref{alg:extened psi} in numerical experiments as the parameters are more relaxed in comparison to Algorithm~\ref{alg:1}, and may lead to better performance as we will see later.  Before proceeding, note that both Algorithms~\ref{alg:extened psi} and Algorithm~\ref{alg:2} require the $w$–block weight matrix $T$ to be positive definite ($T\succ0$) in order to guarantee iterate convergence of the sequence $\{w_k\}$. However, when the goal is only to plot objective gap and function-value residuals, it is sufficient to take $T$ to be a positive semidefinite ($T\succeq0$) matrix. Unless stated otherwise, in all the experiments for Algorithm \ref{alg:2}, we set $\xi_k = 1-\rho + \frac{1}{k^{1.001}}$, $\psi=1.50$, $\rho=\frac{1}{\psi}+\frac{1}{\psi^2}$, $r=\frac{0.99\rho}{2}$ and $r_1 = 0.99r$. Given an iterate $(x_k,w_k,y_k)$, we report Relative objective gap and the Feasibility gap
defined as
\[
\mathrm{Rel\_gap}_{k}
:=
\frac{|\Phi(x_k,w_k)-\Phi^{\star}|}{|\Phi^{\star}|},
\qquad
\mathrm{Fes\_gap}_{k}
:=
\|Ax_k+Bw_k-b\|_{2},
\]
where $\Phi^{\star}:=\Phi(x^{\star},w^{\star})$ is computed as the best objective value obtained across all methods after sufficiently long runs. Furthermore, the KKT system associated with \eqref{eq:model} is
\[
0 \in \partial g(x) + A^\top y, 
\qquad
0 \in \partial f(w) + B^\top y, 
\qquad
Ax + Bw - b = 0.
\]
The first two conditions are equivalent to $x = \operatorname{prox}_g(x - A^\top y),~~
w = \operatorname{prox}_f(w - B^\top y),$ respectively. Therefore, for each iterate $(x_k,w_k,y_k)$, we can measure the combined KKT residual
\begin{equation}\label{eq:kkt-residual}
\rm{KKT\_res}_k
:=
\sqrt{(r_k^x)^2 + (r_k^w)^2 + (r_k^p)^2},
\end{equation}
where
\[
r_k^x := \bigl\|x_k - \operatorname{prox}_g(x_k - A^\top y_k)\bigr\|,
\quad
r_k^w := \bigl\|w_k - \operatorname{prox}_f(w_k - B^\top y_k)\bigr\|,
\quad
r_k^p := \|Ax_k + Bw_k - b\|.
\]

All methods were implemented in Python~3.11 and executed in a Google Colab environment with 12.7~GB RAM.
\subsection{Sparse signal recovery via LASSO}
To assess the practical performance of the proposed algorithms on a sparse signal recovery task, we consider the LASSO model \cite{tibshirani1996regression}
\begin{equation}\label{eq:lasso_signal_recovery}
    \min_{x\in\mathbb{R}^{n}} \lambda \|x\|_{1}
        + \frac12 \|Ax-b\|^{2},
\end{equation}
where $A\in\mathbb{R}^{m\times n}$ is the sensing matrix, $b\in\mathbb{R}^{m}$ is the observation vector, and $\lambda>0$ is the regularization parameter. 

\begin{figure}[htbp]
    \centering
    \subfloat[Algorithm \ref{alg:2}]{%
        \includegraphics[width=0.47\textwidth]{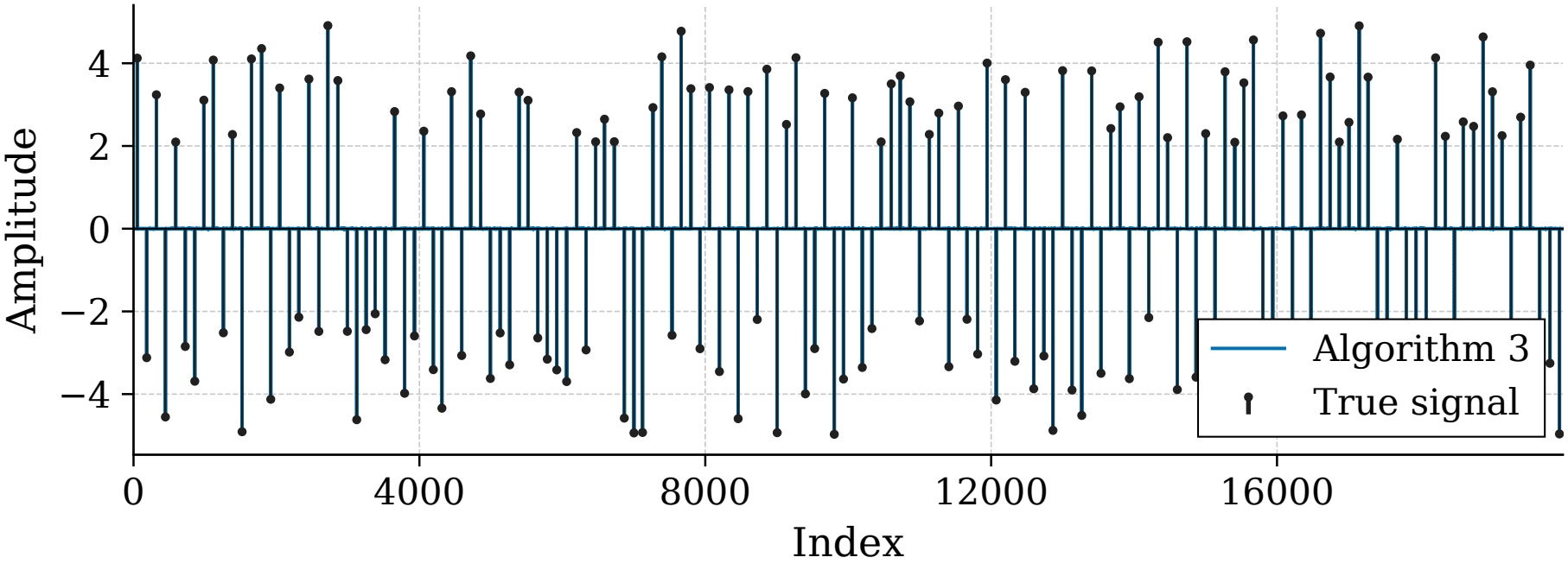}    
    }
    \hfill
    \subfloat[Algorithm \ref{alg:extened psi}]{%
        \includegraphics[width=0.47\textwidth]{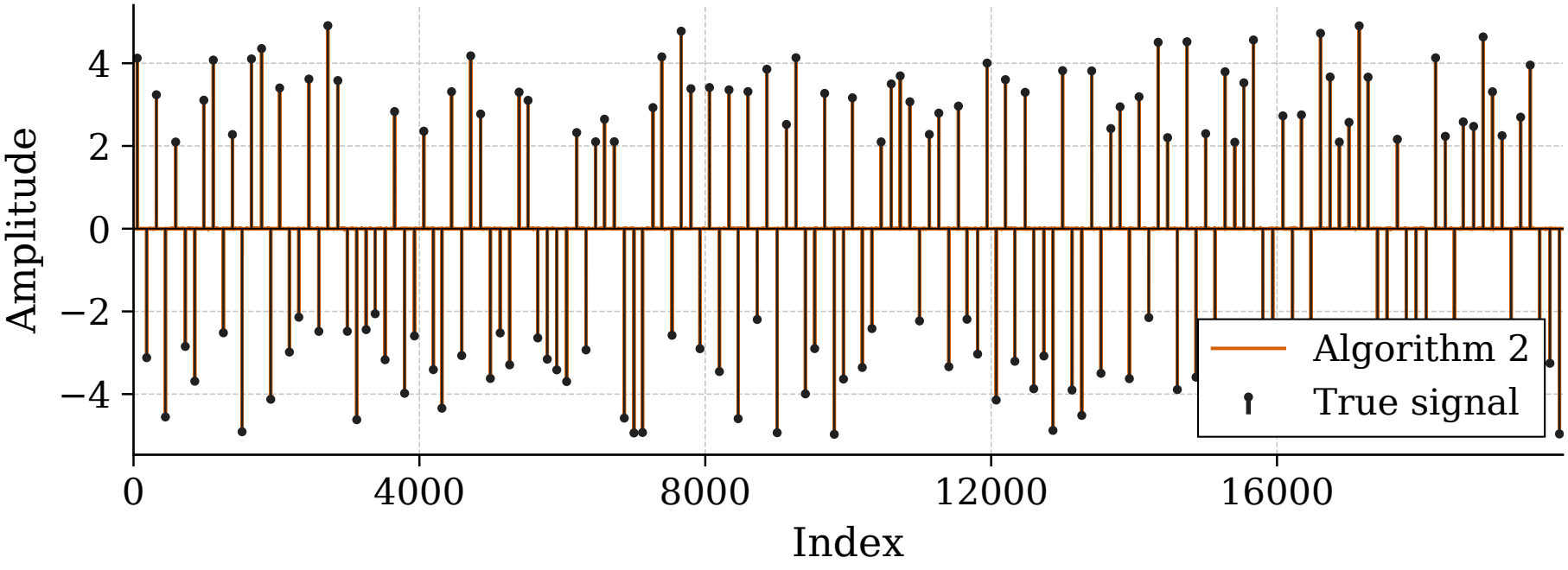}
    }

    \vspace{0.4cm}

    \subfloat[GrpADMM]{%
        \includegraphics[width=0.47\textwidth]{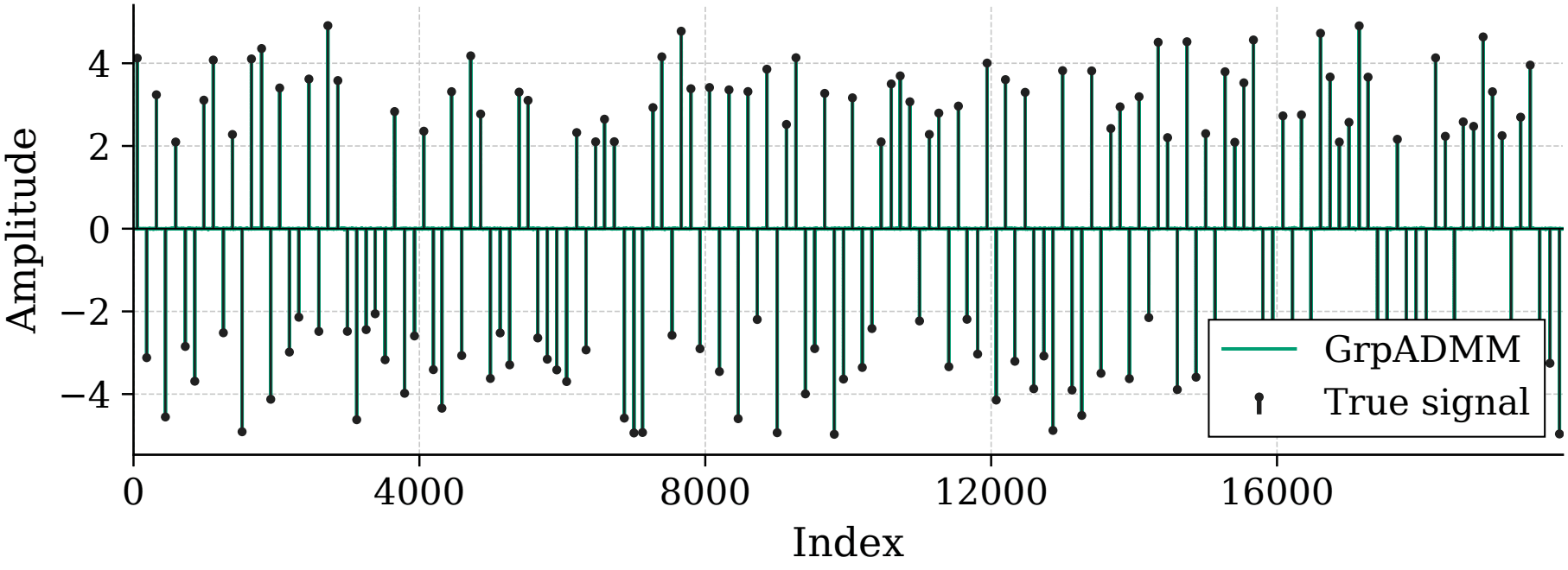}
    }
    \hfill
    \subfloat[PADMM]{%
        \includegraphics[width=0.47\textwidth]{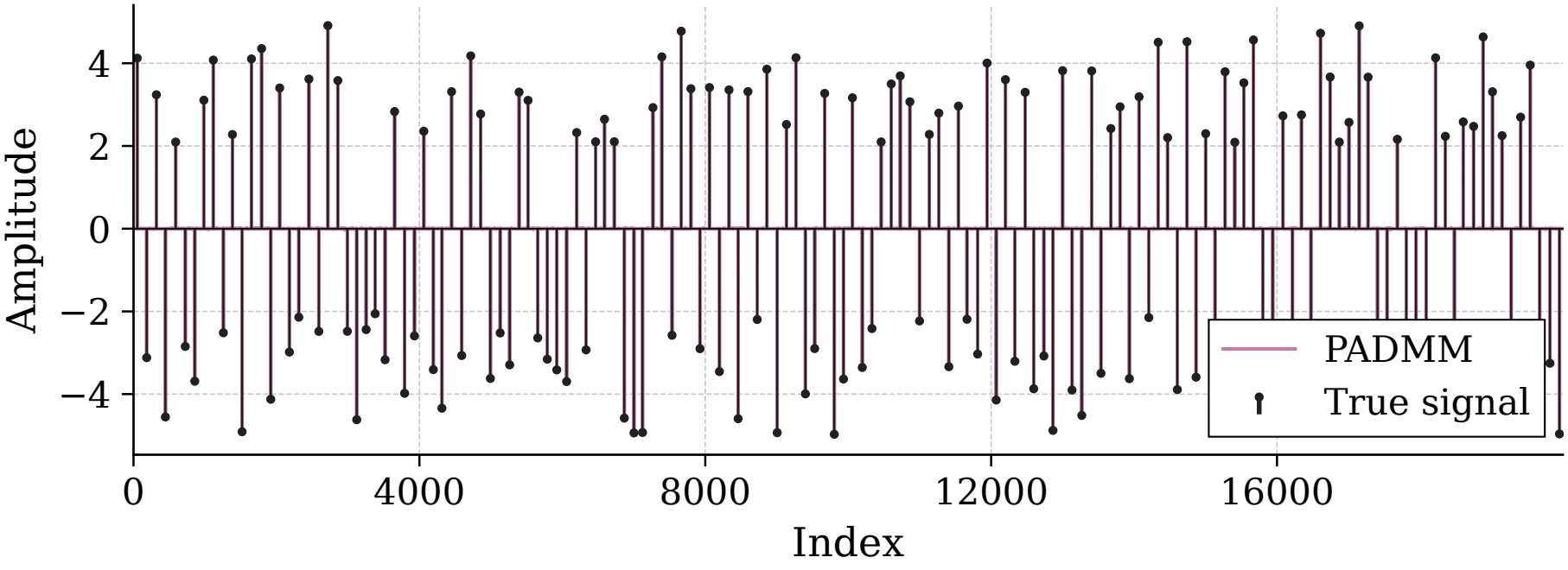}
    }

\caption{Sparse-signal recovery for the LASSO problem with $(n,m,s)=(20000,10000,150)$. The true signal is compared with the reconstructions obtained by (a) Algorithm~\ref{alg:2}, (b) Algorithm~\ref{alg:extened psi}, (c) \eqref{eq:GrpADMM}, and (d) \eqref{eq:PADMM}.}
\label{fig:lasso_signal_support_recovery_n_20000}
\end{figure}

In this experiment, our goal is to recover the sparse signal $x$. In order to place \eqref{eq:lasso_signal_recovery} into the linearly constrained separable framework studied in this paper, we introduce an auxiliary variable $w\in\mathbb{R}^{m}$ and rewrite \eqref{eq:lasso_signal_recovery} as
\begin{equation}\label{eq:split_lasso_signal_recovery}
    \min_{x\in\mathbb{R}^{n},\,w\in\mathbb{R}^{m}}
    \ \Phi(x,w)
    := \lambda \|x\|_{1} + \frac12 \|w\|^{2}
    \quad\text{subject to}\quad
    Ax-w=b.
\end{equation}

\begin{figure}[htbp]
\centering

\subfloat[Relative objective gap]{
  \includegraphics[width=0.30\linewidth]{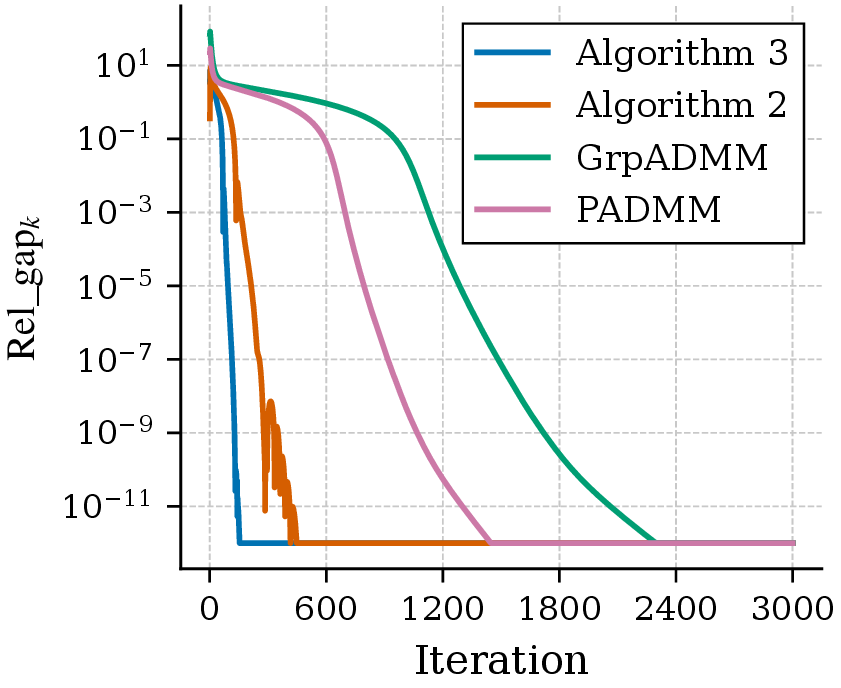}
}\hfill
\subfloat[Feasibility residual]{
  \includegraphics[width=0.30\linewidth]{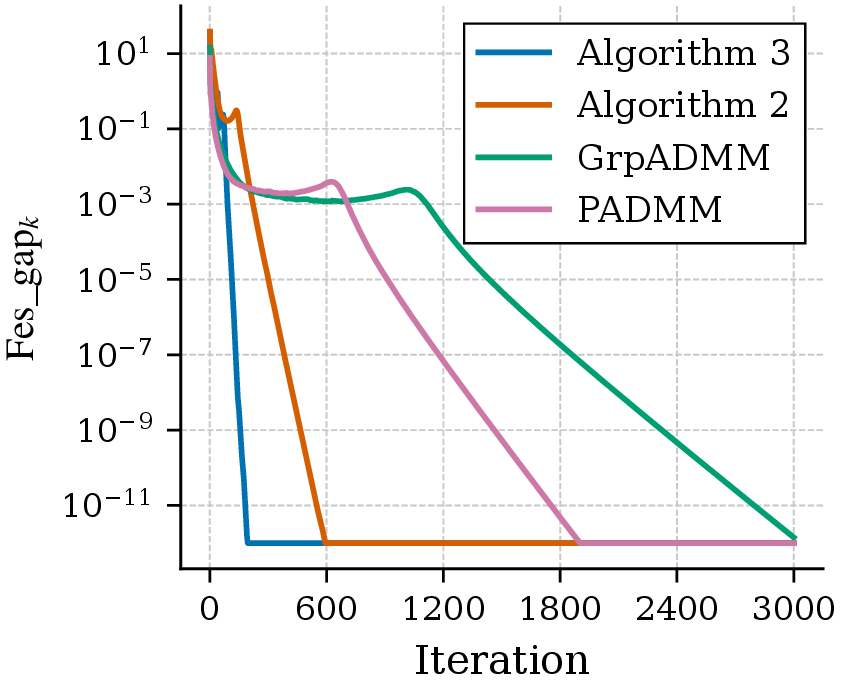}
}

\vspace{0.6em}

\subfloat[KKT residual \eqref{combined_KKT_residual}]{
  \includegraphics[width=0.30\linewidth]{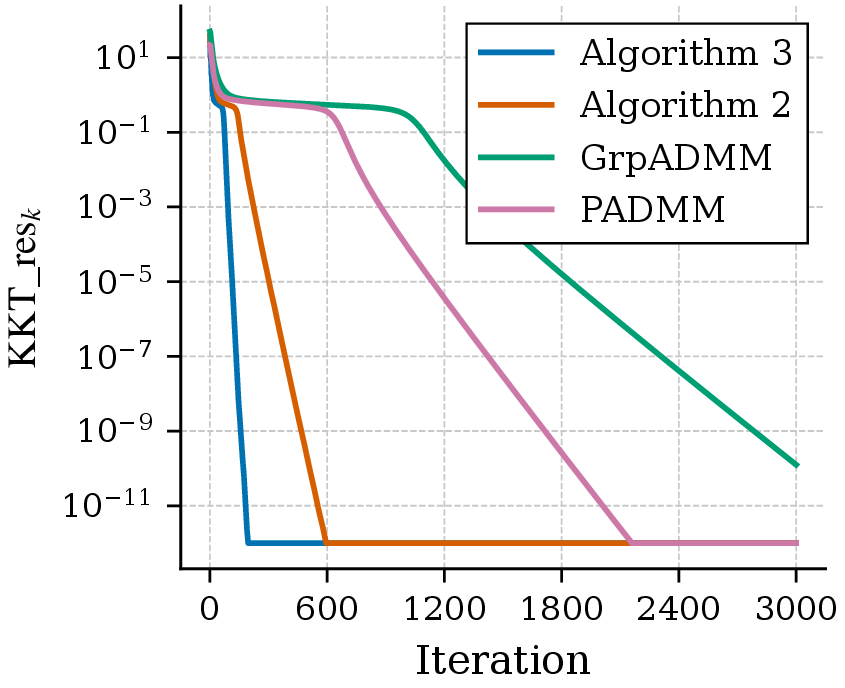}
}\hfill
\subfloat[Primal step-size ($\tau_k$)]{
  \includegraphics[width=0.30\linewidth]{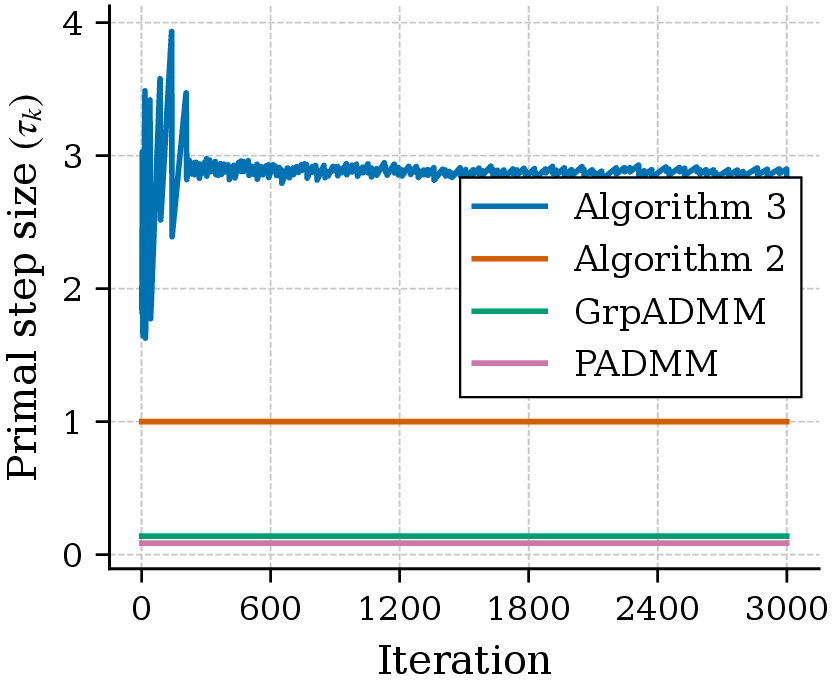}
}

\caption{Convergence plots of the LASSO problem for four different algorithms on the problem size $(n,m,s)=(20000, 10000, 150)$.}
\label{fig:feas_gap_both_sizes_lasso_n_20000}
\end{figure}
Thus, comparing \eqref{eq:split_lasso_signal_recovery} with \eqref{eq:model}, we have $g(x)=\lambda\|x\|_{1},~
f(w)=\frac12\|w\|^{2},$ and $
B=-I.$ In this experiment, we set $(n,m)=(20000,10000),$ and $(n,m)=(10000,3000),$ and $\lambda=10^{-2}.$ The entries of the matrix $A\in\mathbb{R}^{m\times n}$ are sampled independently from the Gaussian distribution $\mathcal{N}(0,1/m)$. The ground-truth signal $x_{\mathrm{true}}\in\mathbb{R}^{n}$ is chosen to be sparse with exactly $s$ nonzero entries, and its nonzero elements are distributed uniformly over $\{1,\dots,n\}$, with a small random perturbation to avoid an artificially regular pattern. The measurement vector is then defined by
\[
b = A x_{\mathrm{true}} + \eta,
\]
where $\eta\sim\mathcal{N}(0,\sigma_{\mathrm{noise}}^{2}I_{m})$, and $\sigma_{\mathrm{noise}}=10^{-2}$. Hence, the data are mildly contaminated by Gaussian noise, which makes the recovery task nontrivial. In addition to the relative objective residual and feasibility gap, we monitor the combined KKT residual \eqref{eq:kkt-residual}, which in this case is
\begin{equation}\label{combined_KKT_residual}
    \mathrm{KKT}\_{\rm res_k}
:=
\Bigl(
    \|Ax_k-w_k-b\|^{2}
    +\frac{1}{4}\|w_k-y_k\|^{2}+ \bigl\|
        x_k-\operatorname{prox}_{\lambda\|\cdot\|_{1}}(x_k-A^{\top}y_k)
    \bigr\|^{2}
\Bigr)^{1/2}.
\end{equation}

In Algorithm~\ref{alg:extened psi}, Algorithm~\ref{alg:2} and \ref{eq:GrpADMM}, we choose $S=I$ and $T=0$ for both practical and structural reasons. By taking $S=I$, the proximal regularisation, in the $x$-subproblem becomes a standard Euclidean quadratic term, which makes the $x$-update explicit and inexpensive. Indeed, the $x$-subproblem of Algorithms~\ref{alg:extened psi} and \ref{alg:2}  admits a closed-form soft-thresholding formula. On the other hand, choosing $T=0$ avoids adding an unnecessary proximal correction in the $w$-block, since the term $\frac12\|w\|_2^2$ together with the augmented Lagrangian contribution already makes the $w$-subproblem strongly convex and explicitly solvable. In fact, since $B=-I$, the $w$-update becomes
\[
w_k
=
\arg\min_w
\left\{
\frac12\|w\|^2
-\langle y_{k-1},w\rangle
+\frac{\sigma_k}{2}\|Ax_k-w-b\|^2
\right\},
\]
which yields the closed-form expression
\[
w_k=\frac{y_{k-1}+\sigma_k(Ax_k-b)}{1+\sigma_k}.
\]
This keeps each iteration computationally light and makes the comparison focus on the step-size strategies rather than on the cost of solving inner subproblems. We compare four methods, and the following parameters are selected for each one.

\begin{itemize}
    \item \textbf{Algorithm~\ref{alg:2}:} $ S=I,~T=0,~\beta=0.1,~\tau_{0}=1$, and other parameters are selected as mentioned above.

    \item \textbf{Algorithm~\ref{alg:extened psi}:} $S=I,~T=0,~ \psi=1.7,~ \beta=0.1,~\mu=0.79,~\tau_{0}=1$.

    \item \textbf{\ref{eq:GrpADMM}:} $S=I,~ T=0,~ \psi=\varphi,~ \sigma_{\rm grp}=2,$ and 
    $\tau_{\rm grp} = \frac{\psi}{\sigma_{\rm grp}\|A\|^{2}}.$

    \item \textbf{\ref{eq:PADMM}:} $S=\frac{1}{\tau_{\rm pad}}I-\sigma_{\rm pad}A^{\top}A, ~T=0,~\sigma_{\rm pad}=2
    $, and $\tau_{\rm pad}=\frac{0.99}{\sigma_{\rm pad}\|A\|^{2}}.$

\end{itemize}

This experiment is designed to examine two complementary aspects of the methods. The first is optimisation performance, which is evaluated through the relative objective residual, the feasibility residual, and the combined KKT residual. The second is recovery quality, which is assessed by comparing the reconstructed signal with the true sparse spike signal. We consider two problem instances corresponding to different dimensions of the underlying sparse signal. Figures~\ref{fig:feas_gap_both_sizes_lasso_n_20000} and~\ref{fig:feas_gap_both_sizes_lasso_n_10000} show that Algorithm~\ref{alg:2} consistently outperforms the other three methods with respect to the residual measures, while Algorithm~\ref{alg:extened psi} provides the second-best performance. The full signal recovery plots in Figures~\ref{fig:lasso_signal_support_recovery_n_20000} and~\ref{fig:lasso_signal_support_recovery_n_10000} further illustrate how accurately each method recovers the spike amplitudes, and these observations are consistent with the residual curves. Overall, this example provides a representative large-scale benchmark for assessing the effectiveness of the proposed step-size strategies.

\begin{figure}[htbp]
\centering

\subfloat[Relative objective gap]{
  \includegraphics[width=0.30\linewidth]{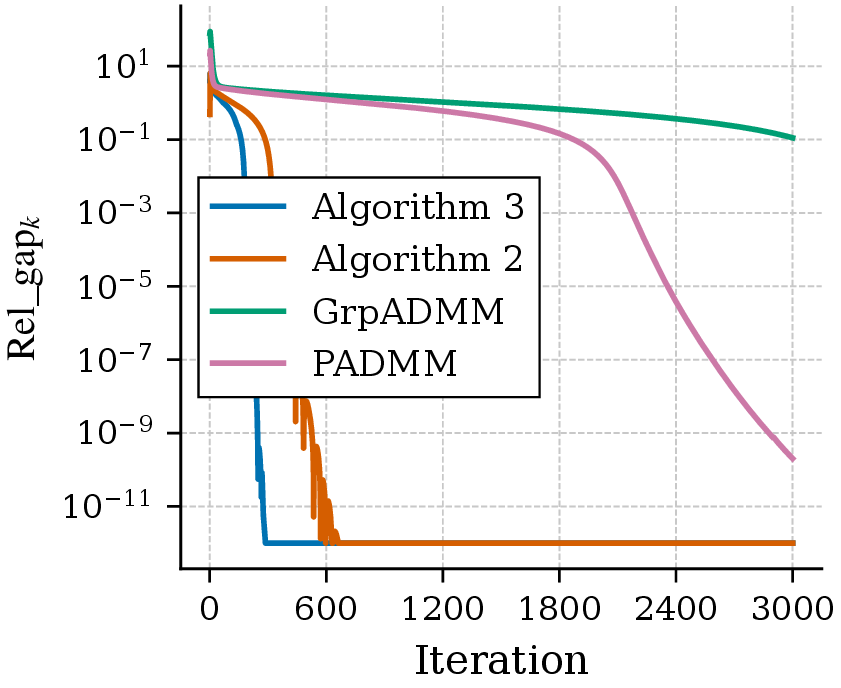}
}\hfill
\subfloat[Feasibility residual]{
  \includegraphics[width=0.30\linewidth]{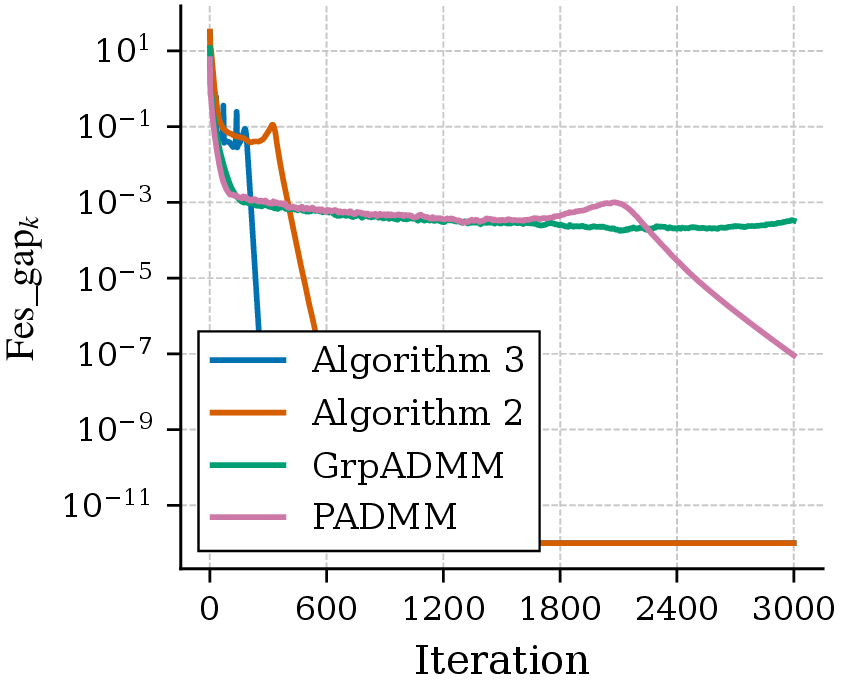}
}

\vspace{0.6em}

\subfloat[KKT residual \eqref{combined_KKT_residual}]{
  \includegraphics[width=0.30\linewidth]{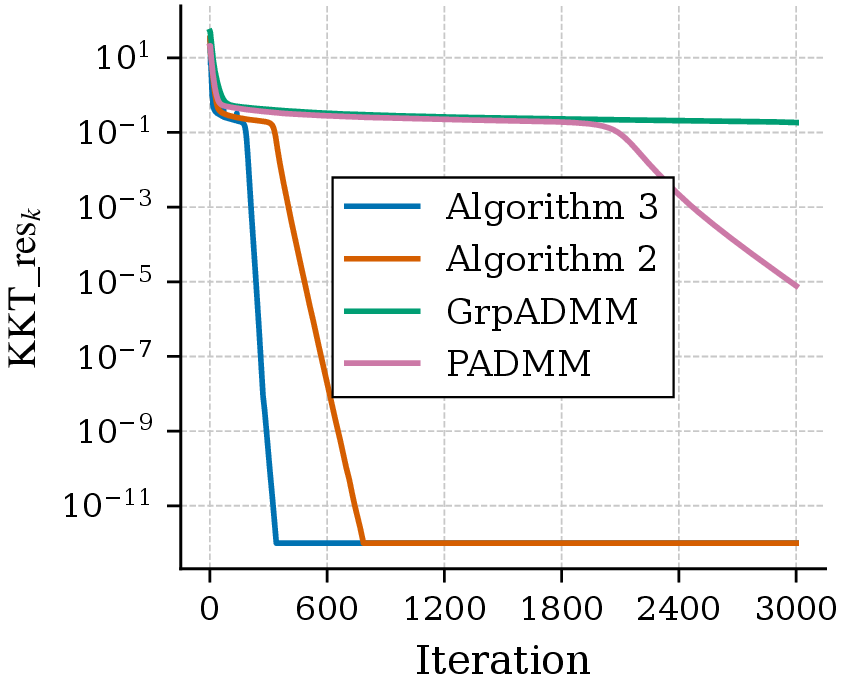}
}\hfill
\subfloat[Primal step-size ($\tau_k$)]{
  \includegraphics[width=0.30\linewidth]{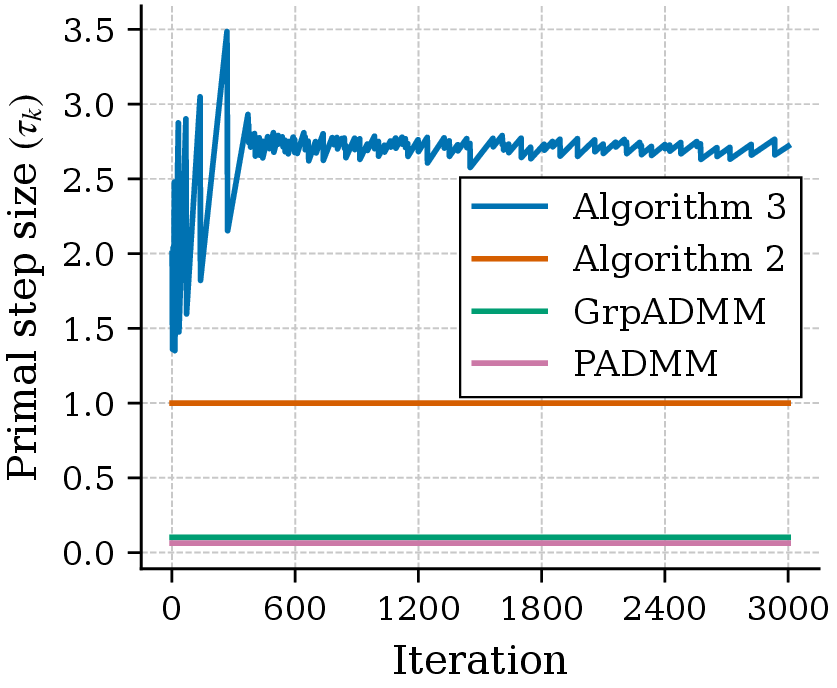}
}

\caption{Convergence plots of the LASSO problem for four different algorithms on the problem size $(n,m,s)=(10000, 3000, 100)$.}
\label{fig:feas_gap_both_sizes_lasso_n_10000}
\end{figure}

\begin{figure}[htbp]
    \centering
    \subfloat[Algorithm 2]{%
        \includegraphics[width=0.48\textwidth]{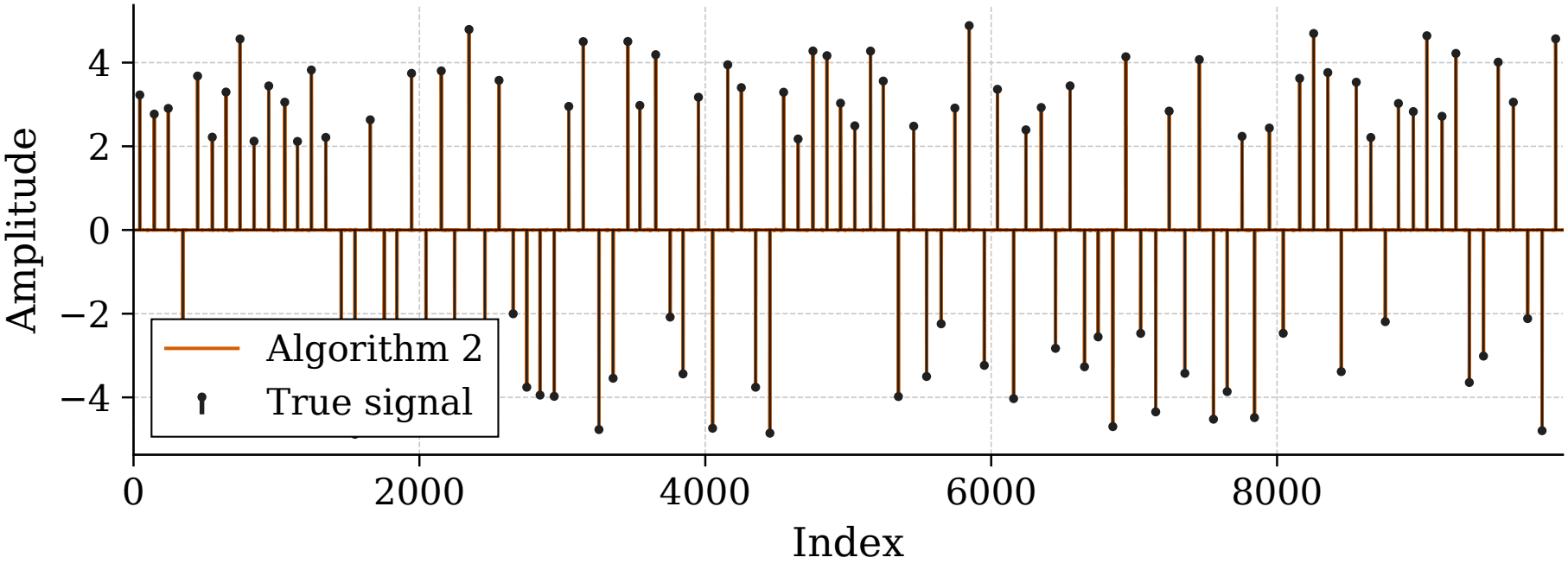}
        
    }
    \hfill
    \subfloat[Algorithm 3]{%
        \includegraphics[width=0.48\textwidth]{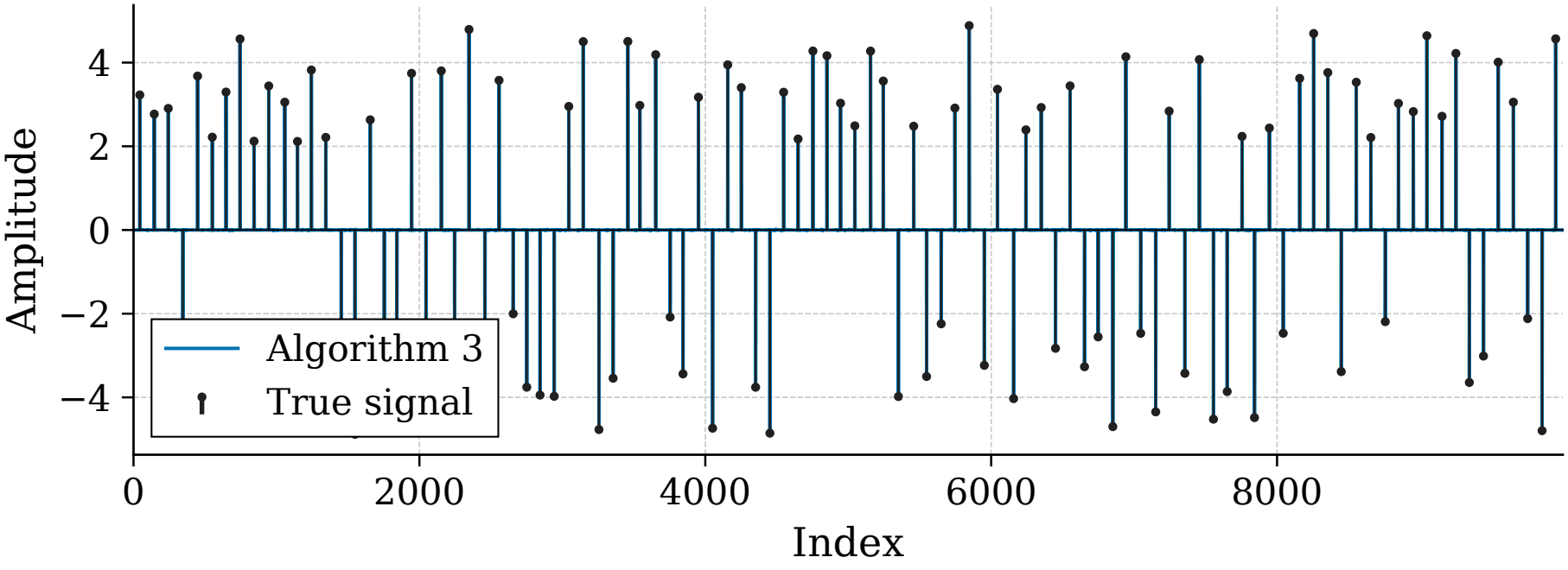}

    }

    \vspace{0.4cm}

    \subfloat[GrpADMM]{%
        \includegraphics[width=0.48\textwidth]{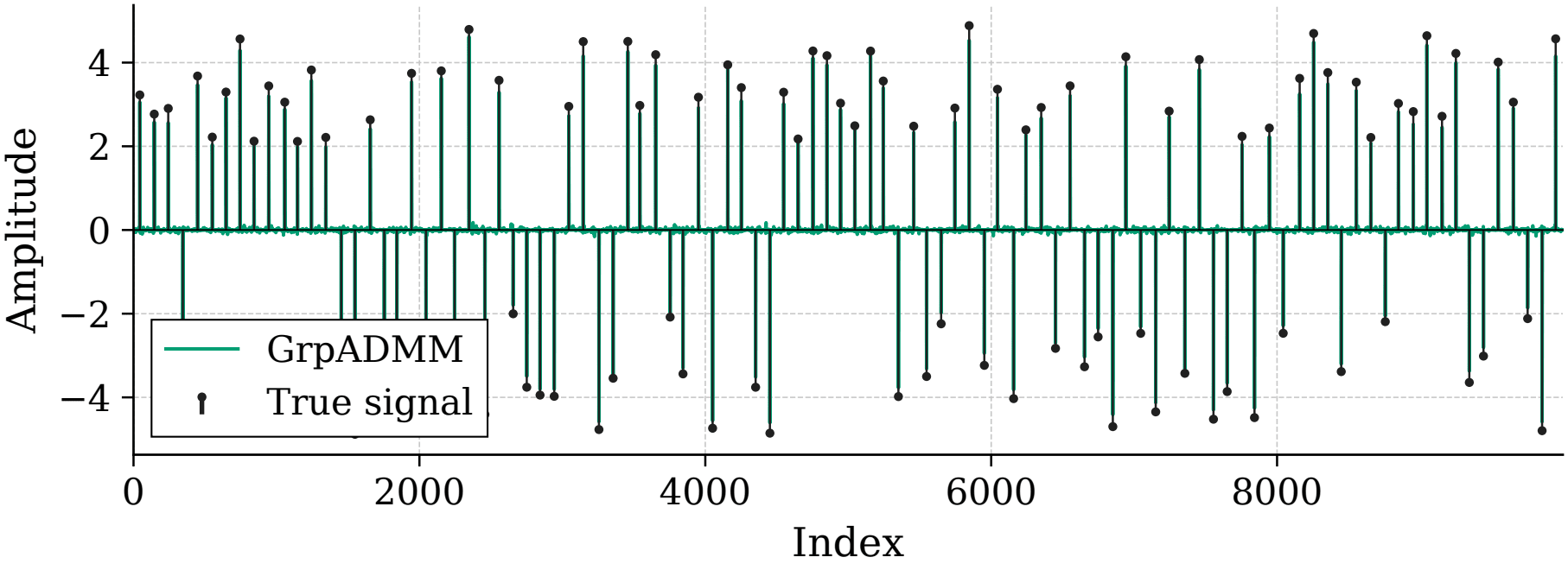}
       
    }
    \hfill
    \subfloat[PADMM]{%
        \includegraphics[width=0.48\textwidth]{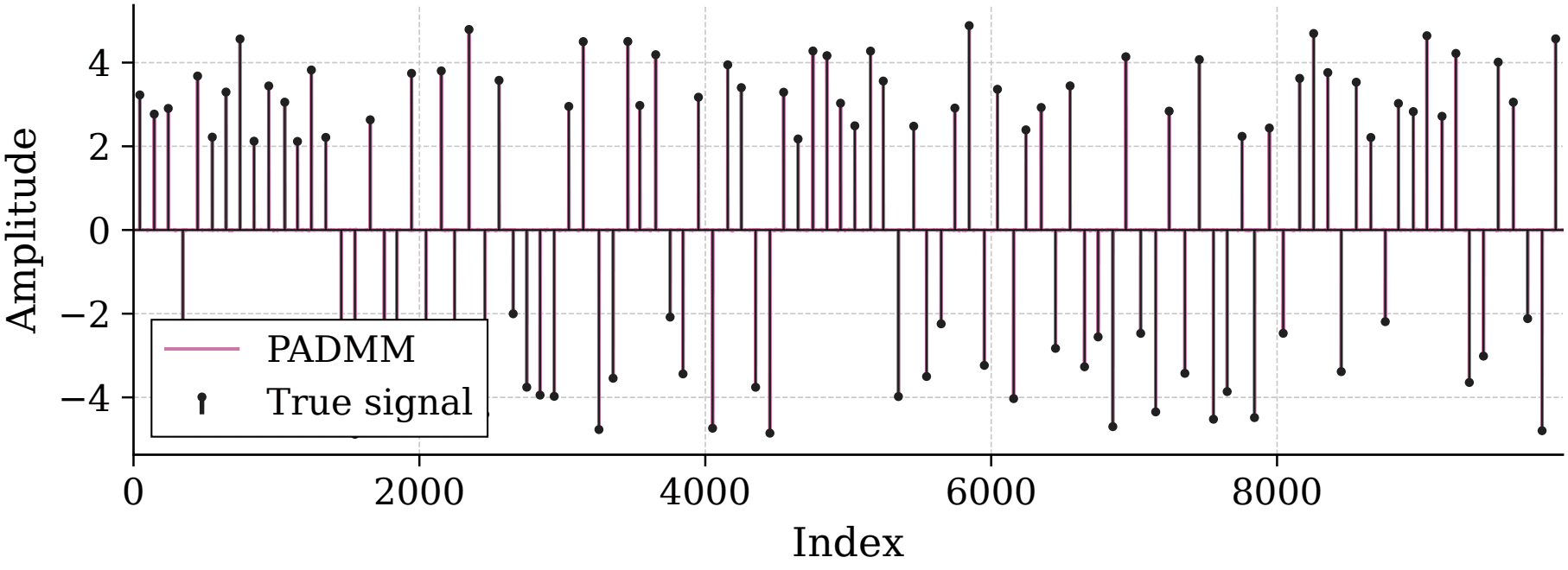}
        
    }

\caption{Recovery of a sparse signal for the LASSO problem with $(n,m,s)=(10000,3000,100)$. The ground-truth signal is compared with the reconstructions generated by (a) Algorithm~\ref{alg:extened psi}, (b) Algorithm~\ref{alg:2}, (c) \eqref{eq:GrpADMM}, and (d) \eqref{eq:PADMM}.}
\label{fig:lasso_signal_support_recovery_n_10000}
\end{figure}

\begin{figure}[t]
\centering

\subfloat[Relative objective gap]{
  \includegraphics[width=0.30\linewidth]{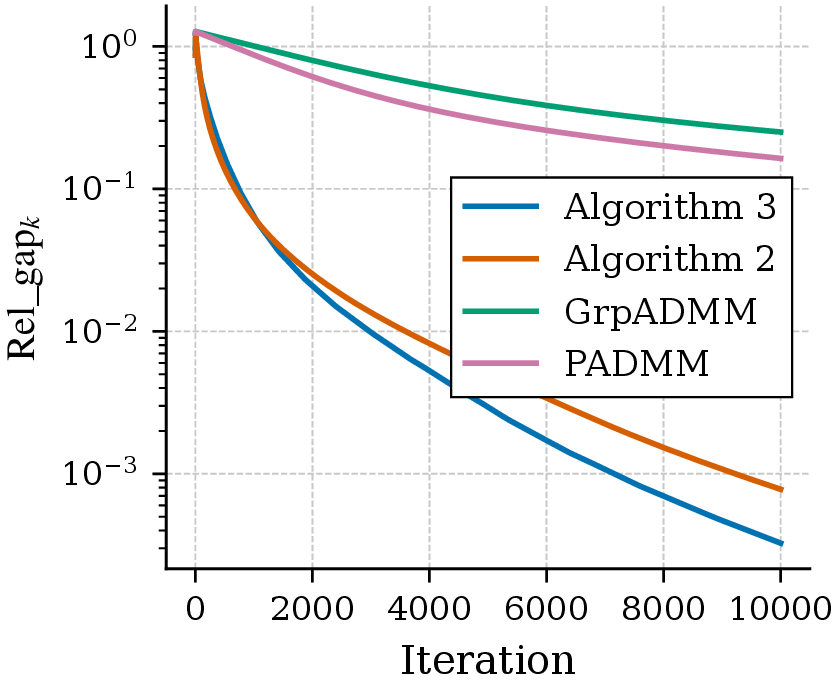}
}\hfill
\subfloat[Feasibility residual]{
  \includegraphics[width=0.30\linewidth]{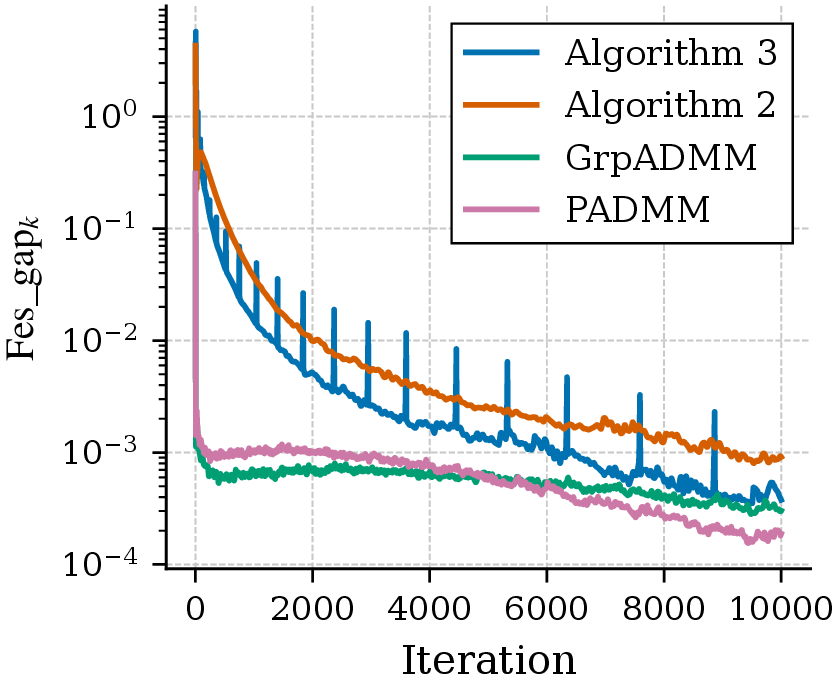}
}

\vspace{0.6em}

\subfloat[KKT residual \eqref{combined_KKT_residual}]{
  \includegraphics[width=0.30\linewidth]{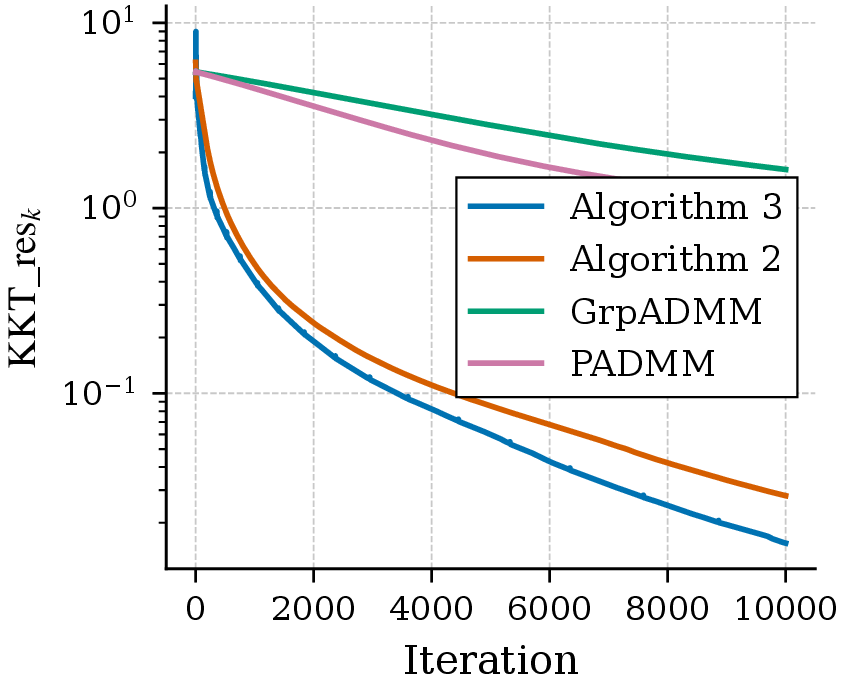}
}\hfill
\subfloat[Primal step-size ($\tau_k$)]{
  \includegraphics[width=0.30\linewidth]{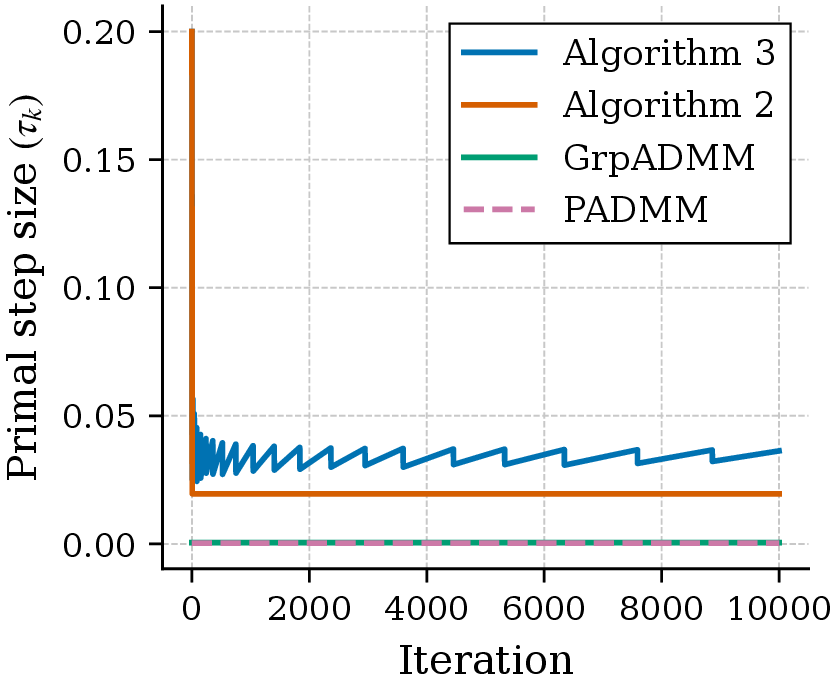}
}

\caption{Convergence plots of the image deblurring problem for four different algorithms.}
\label{fig:image_deblur}
\end{figure}

\subsection{TV-based image deblurring}\label{subsec:tv_deblur_largeA}

Our next experiment is to test on the TV-regularized image deblurring problem \cite{Goldstein}
\begin{equation}\label{eq:img-prim}
    \min_{x\in\mathbb{R}^{N\times N}}
    \;
    \lambda \|\nabla x\|_{2,1}
    + \frac{\mu}{2}\|Hx-c\|_2^2
    + \iota_{[0,1]^{N\times N}}(x),
\end{equation}
where $x$ is the unknown image, $c$ is the blurred and noisy observation, $H$ is a linear blur operator, $\nabla$ is the discrete gradient with periodic boundary conditions, and $\iota_{[0,1]^{N\times N}}$ is the indicator function of the box constraint $[0,1]^{N\times N}$. The isotropic TV seminorm is given by
\[
\|\nabla x\|_{2,1}
=
\sum_{i,j}\sqrt{(\nabla_1x)_{ij}^2+(\nabla_2x)_{ij}^2}.
\]
\begin{figure}[htbp]
\centering

\subfloat[True image]{
  \includegraphics[width=0.25\linewidth]{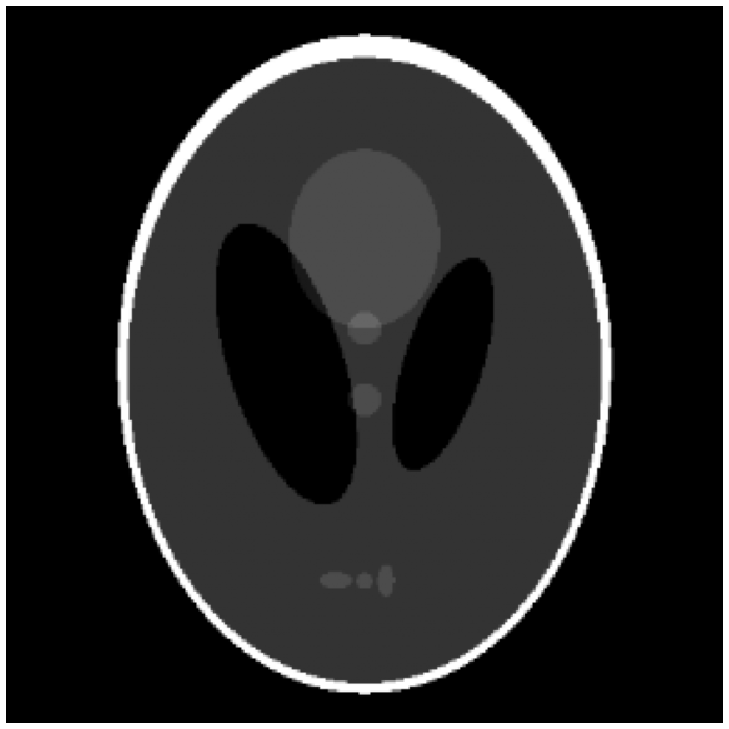}
}\hfill
\subfloat[Blurry and noisy]{
  \includegraphics[width=0.25\linewidth]{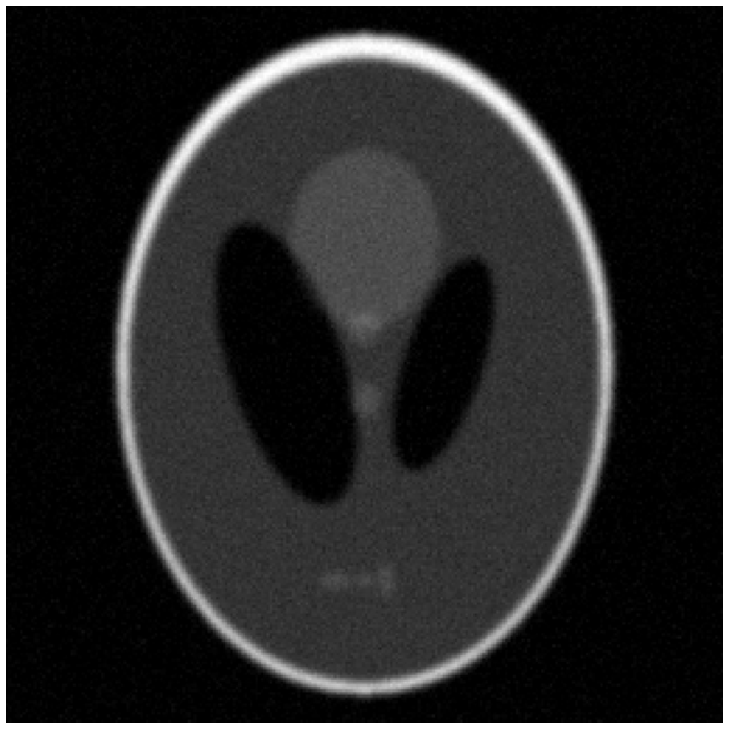}
}\hfill
\subfloat[PADMM (PSNR $=24.97$)]{
  \includegraphics[width=0.25\linewidth]{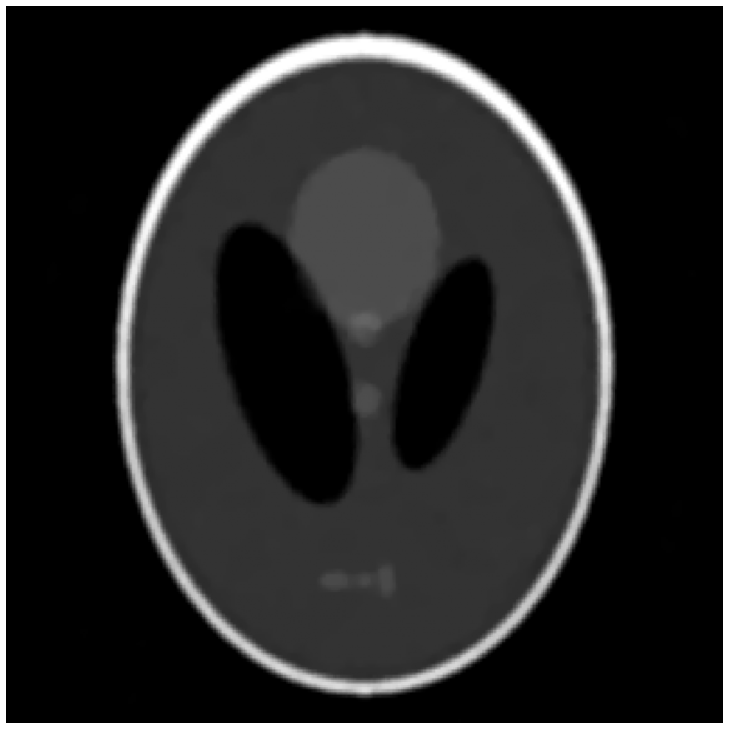}
}

\vspace{0.6em}

\subfloat[Algorithm~\ref{alg:2} (PSNR $=29.65$)]{
  \includegraphics[width=0.25\linewidth]{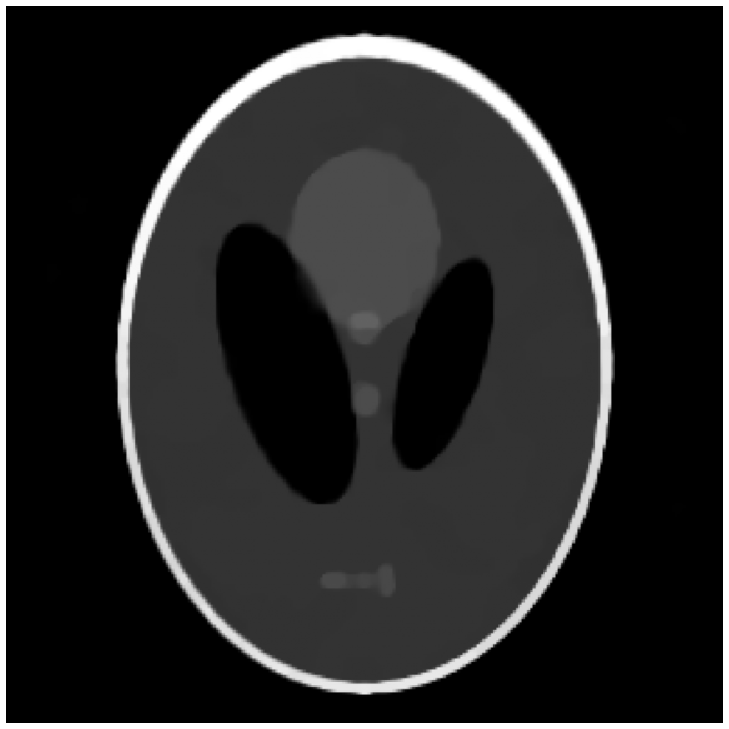}
}\hfill
\subfloat[Algorithm~\ref{alg:extened psi} (PSNR $=29.62$)]{
  \includegraphics[width=0.25\linewidth]{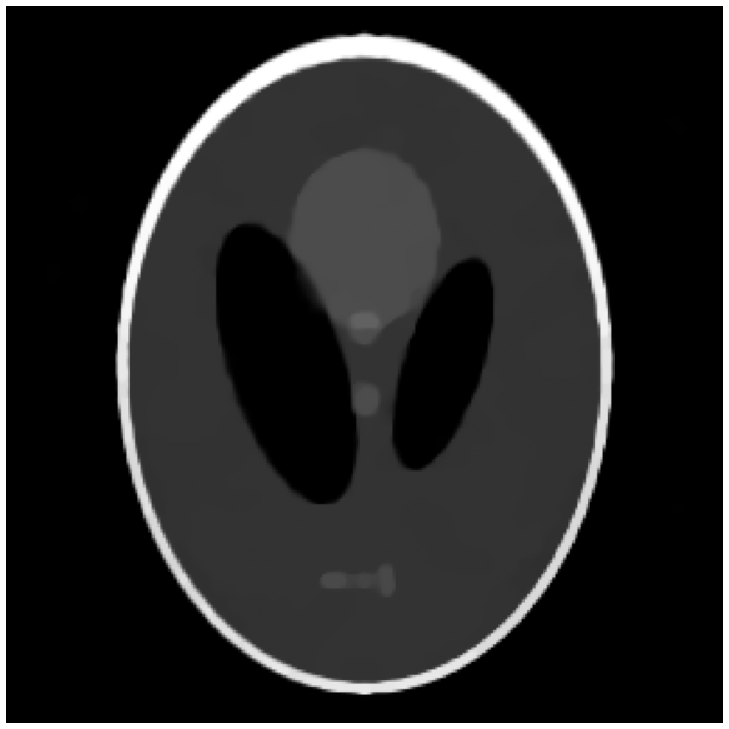}
}\hfill
\subfloat[GrpADMM (PSNR $=24.44$)\label{subfig:deblur-grpadmm}]{
  \includegraphics[width=0.25\linewidth]{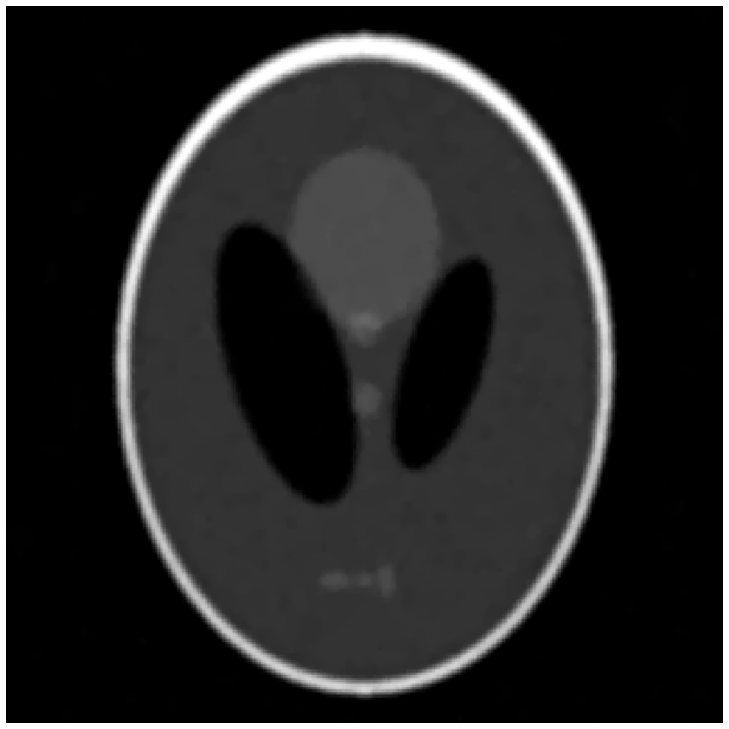}
}

\caption{TV-regularized image deblurring. Top row: (a) ground truth, (b) blurry and noisy observation, and (c) reconstructed by PADMM. Bottom row: (d) reconstructed by Algorithm~\ref{alg:2}, (e) reconstructed by Algorithm~\ref{alg:extened psi}, and (f) reconstructed by GrpADMM.}
\label{fig:deblur-recon}
\end{figure}
\begin{figure}[htbp]
\centering

\subfloat[$\beta=0.1$]{
  \includegraphics[width=0.25\linewidth]{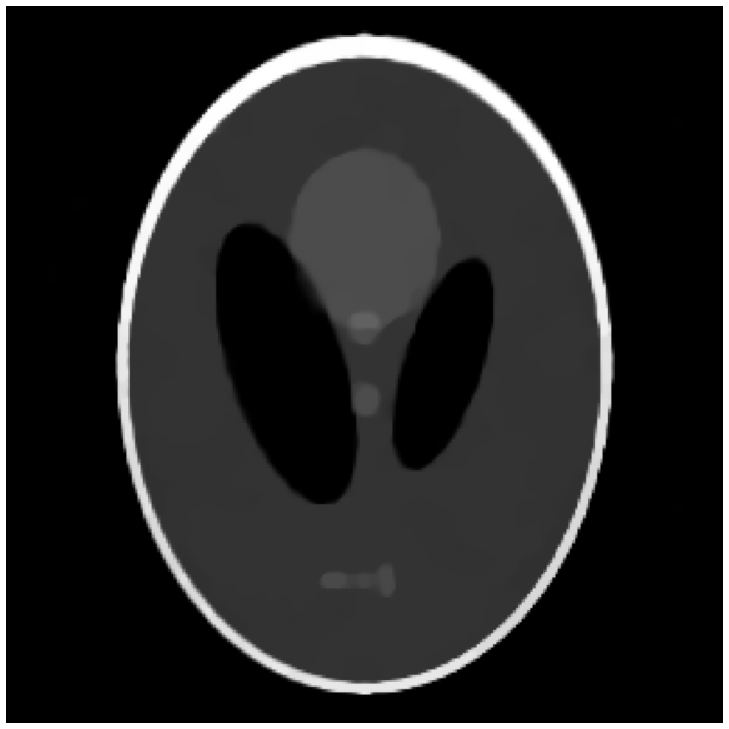}
}\hfill
\subfloat[$\beta=0.5$]{
  \includegraphics[width=0.25\linewidth]{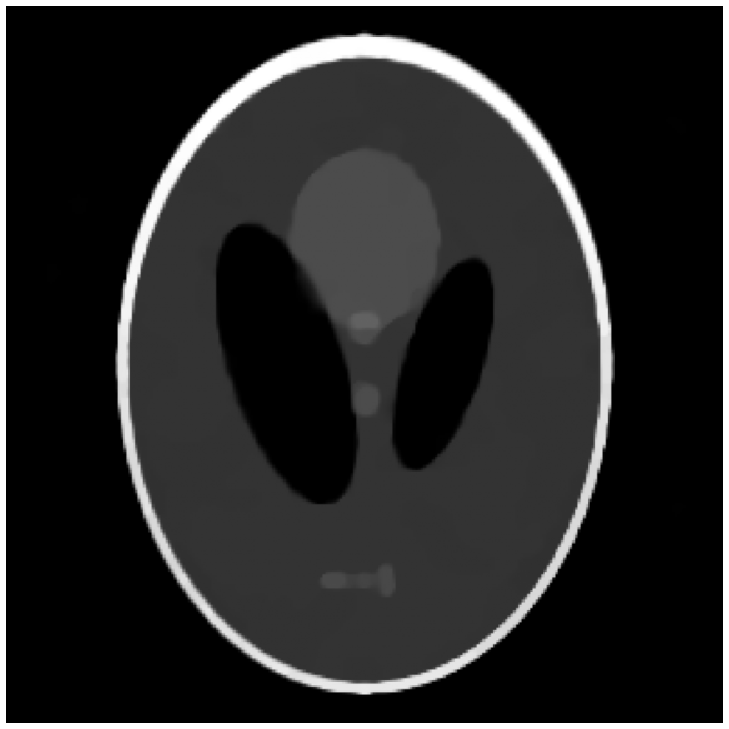}
}\hfill
\subfloat[$\beta=2$]{
  \includegraphics[width=0.25\linewidth]{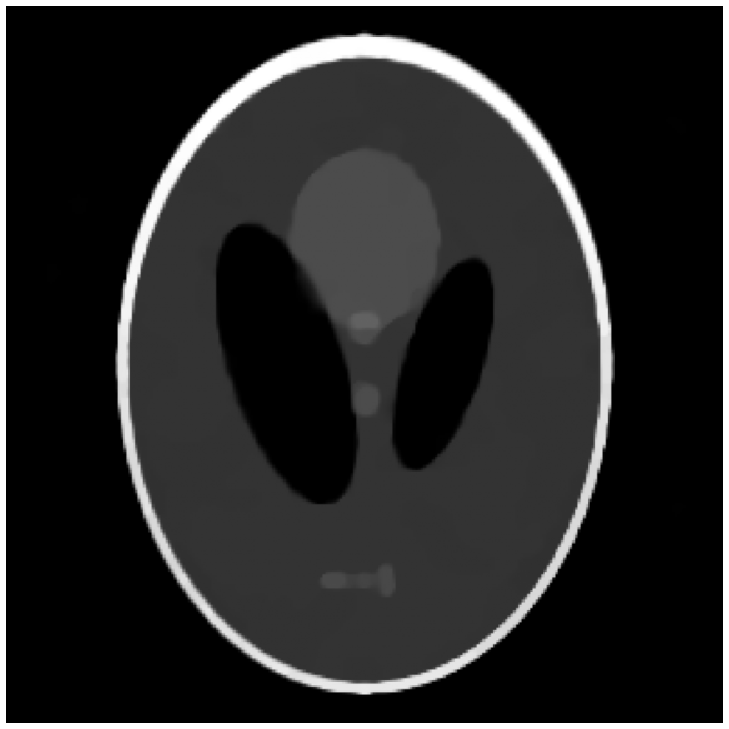}
}

\vspace{0.6em}

\subfloat[$\beta=3$]{
  \includegraphics[width=0.25\linewidth]{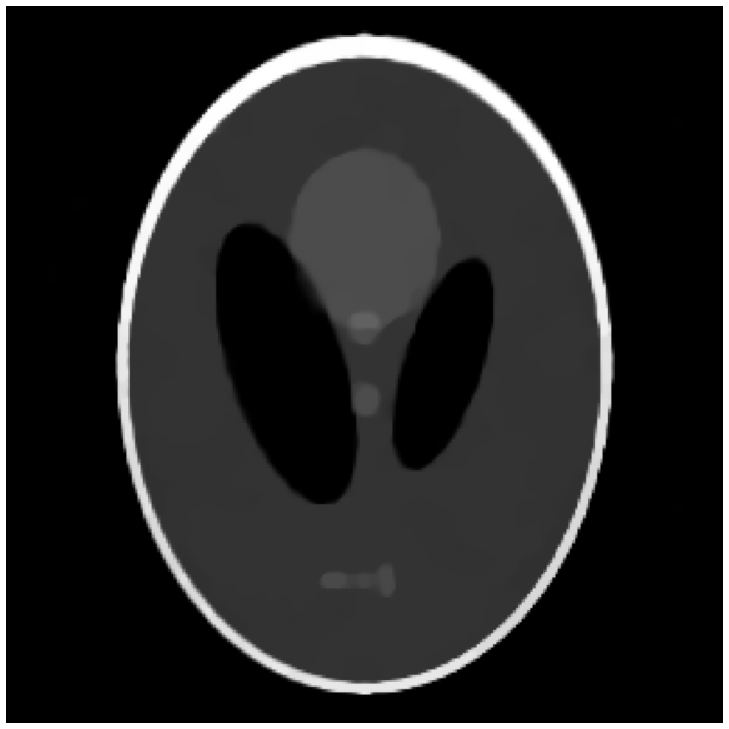}
}\hfill
\subfloat[$\beta=7$]{
  \includegraphics[width=0.25\linewidth]{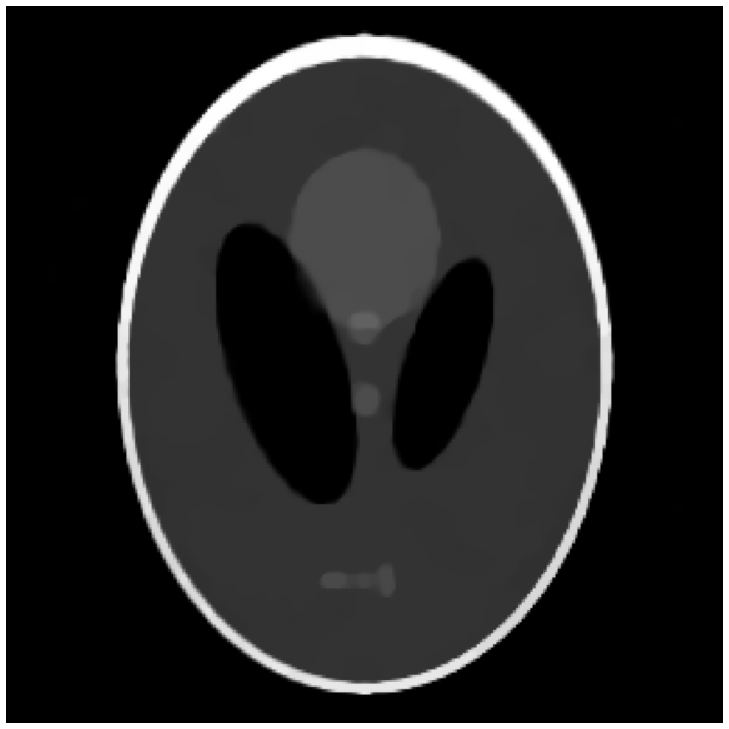}
}\hfill
\subfloat[$\beta=10$\label{subfig:deblur-grpadmm}]{
  \includegraphics[width=0.25\linewidth]{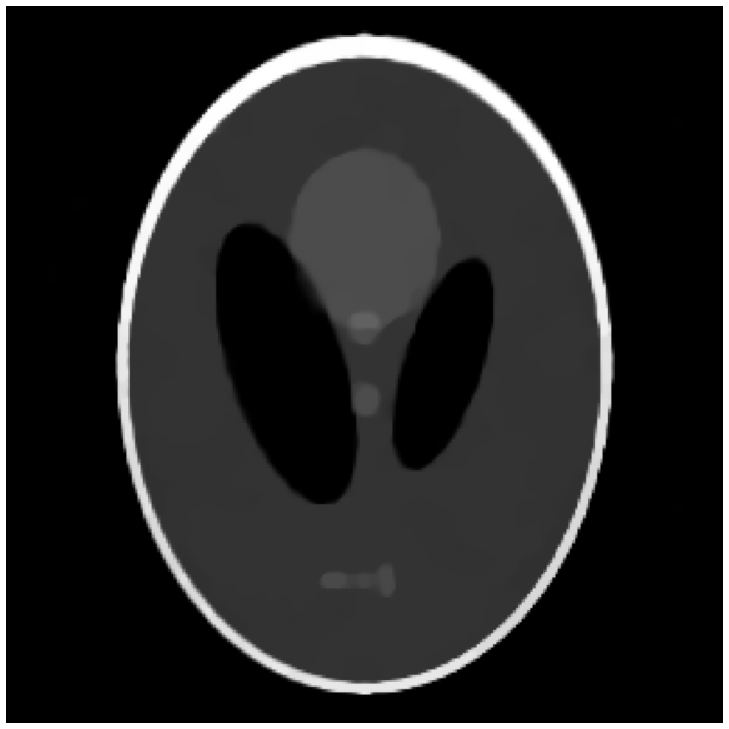}
}

\caption{Influence of $\beta$ on recovered images obtained by Algorithm~\ref{alg:2}. The associated PSNR values are: (a) 29.52 dB, (b) 29.69 dB, (c) 29.68 dB, (d) 29.67 dB, (e) 29.62 dB, and (f) 29.68 dB.}
\label{fig:diferent beta}
\end{figure}
To fit the model \eqref{eq:model}, we introduce two auxiliary variables, namely $w_1=\nabla x$ and $w_2=Hx-c$. Then~\eqref{eq:img-prim} can be rewritten as
\begin{equation*}
    \min_{x,w_1,w_2}
    \;
    \iota_{[0,1]^{N\times N}}(x)
    + \lambda \|w_1\|_{2,1}
    + \frac{\mu}{2}\|w_2\|_2^2
    \quad
    \text{s.t.}
    \quad
    \nabla x-w_1=0,
    \qquad
    Hx-w_2=c.
\end{equation*}
Equivalently, by writing $w=(w_1,w_2)$, we obtain
\[
\min_{x,w}\; g(x)+f(w)
\quad \text{s.t.} \quad
Ax+Bw=b,
\]
where $g(x)=\iota_{[0,1]^{N\times N}}(x),~ f(w_1,w_2)=\lambda\|w_1\|_{2,1}+\frac{\mu}{2}\|w_2\|_2^2,$ and
\[
A=
\begin{bmatrix}
\nabla\\ H
\end{bmatrix},
\quad
B=-I,
\quad
b=
\begin{bmatrix}
0\\ c
\end{bmatrix}.
\]
In all experiments, we take $N=256$ and use a resized Shepp--Logan phantom $x^\star\in[0,1]^{256\times 256}$ as the ground truth. The blur operator is a periodic Gaussian convolution. To create an instance with a relatively large operator norm, we scale the blur operator as $H=\alpha_H\widetilde H$ with $\alpha_H=20$, where $\widetilde H$ denotes the normalised Gaussian blur. Then the observation is generated by taking
\[
c=\alpha_H(\widetilde Hx^\star+\varepsilon),
\quad
\varepsilon\sim\mathcal N(0,\bar\sigma^2I),
\quad
\bar\sigma=0.01.
\]
In this way, the operator $A$ becomes significantly larger than in the unscaled case. After a moderate tuning, we set $\lambda=10^{-2}$ and $\mu=2.5\times10^{-3}$. For Algorithm~\ref{alg:1}, Algorithm~\ref{alg:2}, and \eqref{eq:GrpADMM}, we choose
\[
S=s_xI,
\quad
T=
\begin{bmatrix}
t_{\rm tv}I & 0\\
0 & t_{\rm data}I
\end{bmatrix},
\quad
t_{\rm tv}=0.10,\quad t_{\rm data}=0.15.
\]
This choice is convenient both theoretically and computationally. Since $g(x)$ is just the box constraint, the $x$-subproblem for Algorithm~\ref{alg:1}, Algorithm~\ref{alg:2}, and \eqref{eq:GrpADMM} becomes an explicit projection
\[
x_k
=
P_{[0,1]^{N\times N}}
\!\left(
u_k-\frac{\tau}{s_x}A^\top y
\right),
\]
with the obvious iteration-dependent values of $\tau$ and $y$. Moreover, here
\[
A^\top y=\nabla^\top y^{(1)}+H^\top y^{(2)}
       =-\mathrm{div}(y^{(1)})+H^\top y^{(2)}.
\]
The block-diagonal choice of $T$ also preserves separability in the $w$-update. Here, $w_1$ corresponds to the TV term, while $w_2$ corresponds to the quadratic data-fidelity term, and these two blocks have rather different numerical behaviour. In practice, taking $t_{\rm data} \ge t_{\rm tv}$ yields slightly better damping in the data block and leads to more stable behaviour on this scaled instance. For the above algorithms with fixed steps, the $w$-subproblem has a closed-form solution. Writing $y=(y^{(1)},y^{(2)})$ and setting
$a_1=\nabla x+\frac{y^{(1)}}{\sigma}$ and
$a_2=Hx-c+\frac{y^{(2)}}{\sigma}$,
we obtain
\[
w_1
=
\operatorname{prox}_{(\lambda/d_1)\|\cdot\|_{2,1}}
\!\left(
\frac{\sigma a_1+t_{\rm tv}w_1^{\rm old}}{d_1}
\right),
\]
where
\[
d_1=\sigma+t_{\rm tv}~~\text{and}~~
w_2
=
\frac{\sigma a_2+t_{\rm data}w_2^{\rm old}}
{\mu+\sigma+t_{\rm data}}.
\]
Thus, the $w_1$-update is a pointwise isotropic soft-thresholding step. For Algorithm~\ref{alg:2}, the proximal term in the $w$-subproblem is scaled by $\frac{1}{\sigma_k}$, so the formulas change slightly. With
$a_{1,k}=\nabla x_k+\frac{y^{(1)}_{k-1}}{\sigma_k}$ and
$a_{2,k}=Hx_k-c+\frac{y^{(2)}_{k-1}}{\sigma_k}$,
the updates become
\[
w_{1,k}
=
\operatorname{prox}_{(\lambda/d_{1,k})\|\cdot\|_{2,1}}
\!\left(
\frac{\sigma_ka_{1,k}+\frac{t_{\rm tv}}{\sigma_k}w_{1,k-1}}{d_{1,k}}
\right),
\quad
d_{1,k}=\sigma_k+\frac{t_{\rm tv}}{\sigma_k},
\]
and
\[
w_{2,k}
=
\frac{
\sigma_ka_{2,k}+\frac{t_{\rm data}}{\sigma_k}w_{2,k-1}
}{
\mu+\sigma_k+\frac{t_{\rm data}}{\sigma_k}
}.
\]
These are exactly the formulas used in the implementation. For PADMM, we take the classical linearised choice
\[
S_{\rm pad}=\frac{1}{\tau_{\rm pad}}I-\sigma_{\rm pad}A^\top A.
\]
With this choice, the $x$-subproblem reduces to the projected gradient-type step
\[
x_{k+1}
=
P_{[0,1]^{N\times N}}
\Big(
x_k-\tau_{\rm pad}A^\top\big(y_k+\sigma_{\rm pad}(Ax_k-w_k-b)\big)
\Big),
\]
which is straightforward to implement and requires only the application of $A$ and $A^\top$. We compare four methods with the following parameters.
\begin{itemize}
    \item \textbf{Algorithm~\ref{alg:2}:}
     $\beta=5$, $\tau_0=0.2$, $S=s_xI$ with $s_x=1$, and $T=\mathrm{blkdiag}(0.10I,0.15I)$.

    \item \textbf{Algorithm~\ref{alg:extened psi}:}
    $\psi=1.7$, $\beta=5$, $\mu_{\rm step}=0.79$, $\tau_0=0.2$, $S=s_xI$ with $s_x=1$, and $T=\mathrm{blkdiag}(0.10I,0.15I)$.

    \item \textbf{\ref{eq:GrpADMM}:}
    $\psi=\varphi$, $\sigma_{\rm grp}=8$, $S=s_xI$ with $s_x=1$, $T=\mathrm{blkdiag}(0.10I,0.15I)$, and
    $\tau_{\rm grp}=\frac{\varphi}{\sigma_{\rm grp}\|A\|^2}$.

    \item \textbf{\ref{eq:PADMM}:}
    $\sigma_{\rm pad}=15$, $\tau_{\rm pad}=\frac{0.99}{\sigma_{\rm pad}\|A\|^2}$,
    $S_{\rm pad}=\frac{1}{\tau_{\rm pad}}I-\sigma_{\rm pad}A^\top A$, and
    $T=\mathrm{blkdiag}(0.10I,0.15I)$.
\end{itemize}
We initialize all methods with $x_0=P_{[0,1]^{N\times N}}(c/\alpha_H),~u_0=x_0,~w_{1,0}=\nabla x_0,~w_{2,0}=Hx_0-c,~y_0=0.$ For each iterate $(x_k,w_{1,k},w_{2,k},y_k^{(1)},y_k^{(2)})$, the combined KKT residual \eqref{eq:kkt-residual} is given by
\[
\operatorname{KKTres}_k
:=
\sqrt{r_{x,k}^2+r_{w,k}^2+r_{p,k}^2},
\]
where
\[
r_{x,k}
:=
\left\|
x_k-
P_{[0,1]^{N\times N}}
\bigl(
x_k-\nabla^\top y_k^{(1)}-H^\top y_k^{(2)}
\bigr)
\right\|,
\]
\[
r_{w,k}^2
:=
\left\|
w_{1,k}-
\operatorname{prox}_{\lambda\|\cdot\|_{2,1}}
\bigl(
w_{1,k}+y_k^{(1)}
\bigr)
\right\|^2
+
\left\|
\mu w_{2,k}-y_k^{(2)}
\right\|^2,
\]
and
\[
r_{p,k}^2
:=
\|\nabla x_k-w_{1,k}\|^2
+
\|Hx_k-w_{2,k}-c\|^2.
\]
The numerical results in Figures~\ref{fig:image_deblur} and~\ref{fig:deblur-recon} show a clear advantage of the two proposed algorithms over the fixed-step cases on this image deblurring instance. From Figure~\ref{fig:image_deblur}, both Algorithms~\ref{alg:extened psi} and \ref{alg:2} decrease the relative objective gap substantially faster than GrpADMM and PADMM, with Algorithm~\ref{alg:2} giving the best overall performance and Algorithm~\ref{alg:extened psi} following very closely. A similar trend is observed in the combined KKT residual graph. Although Figure~\ref{fig:image_deblur} shows that the fixed-step methods can produce slightly smaller raw feasibility residuals in part of the run, this advantage is not reflected in either the objective decrease or the overall KKT residual. From Figure~\ref{fig:deblur-recon}, we can observe that both Algorithms~\ref{alg:extened psi} and \ref{alg:2} recover sharper boundaries and finer structures than the other two algorithms. In particular, the images produced by GrpADMM and PADMM remain visibly more blurred, whereas the proposed algorithms recover the phantom's main anatomical features much more accurately. Furthermore, one can observe from Figure~\ref{fig:diferent beta} that, when $\beta\in[1,5]$, all reconstructions are of high quality. Hence, for this deblurring problem, Algorithm~\ref{alg:2} appears quite robust to $\beta$, and a broad range of values yields good reconstruction performance.
\subsection{Unbalanced optimal transport problem}
\label{sec:num-ot-split}

We consider the Kantorovich optimal transport (OT) problem \cite{Villani2003,PeyreCuturi2019,Chizat2018Unbalanced}
\begin{equation*}
  \min_{X\in\mathbb{R}_+^{n_s\times n_t}} \ \langle C,X\rangle
  \quad \text{subject to}\quad
  X\mathbf{1}=a,\qquad X^\top\mathbf{1}=b,
\end{equation*}
where $C\in\mathbb{R}^{n_s\times n_t}$ is the transport cost matrix, and $a\in\Delta^{n_s}$, $b\in\Delta^{n_t}$ are prescribed source and target histograms on the probability simplex $\Delta^n:=\{u\in\mathbb{R}^n_+:\ \mathbf{1}^\top u=1\}.$ To relax the marginal equalities, we adopt the
\emph{unbalanced} OT model with a squared-$\ell_2$ penalty on marginal violations (see \cite[Page 13]{Chapel2021UOTregpath} for more details):
\begin{equation*}
  \min_{X\ge 0}\ \langle C,X\rangle
  + \frac{\gamma}{2}\big(\|X\mathbf{1}-a\|_2^2 + \|X^\top\mathbf{1}-b\|_2^2\big),
  \qquad \gamma>0,
\end{equation*}
which is a classical quadratic-penalty relaxation of the equality constraints; see, e.g., \cite{PeyreCuturi2019} for quadratic penalties in constrained convex optimisation and for linear OT constraints. This model penalises deviations from the marginal constraints and is particularly convenient for this type of separable convex optimisation problem. We take uniform grids $(s_i)_{i=1}^{n_s}\subset[0,1]$ and $(t_j)_{j=1}^{n_t}\subset[0,1]$, with
\[
s_i=\frac{i-1}{n_s-1},\qquad t_j=\frac{j-1}{n_t-1},
\]
and define the quadratic ground cost
\[
C_{ij}=(s_i-t_j)^2,\qquad i=1,\dots,n_s,\ \ j=1,\dots,n_t.
\]

\begin{figure}[htbp]
\centering

\subfloat[Relative objective gap \label{subfig:ot-feas-100}]{
  \includegraphics[width=0.30\linewidth]{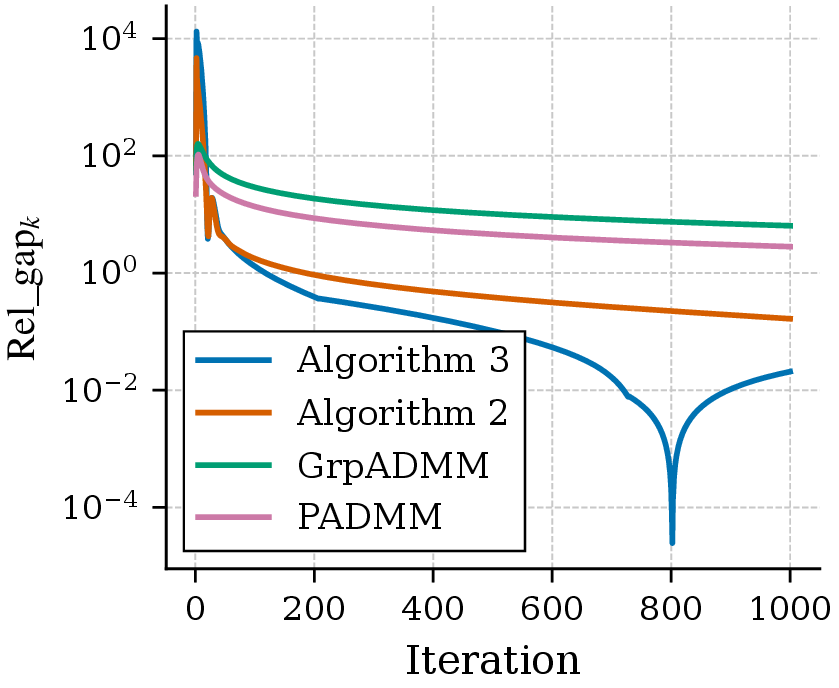}
}\hfill
\subfloat[Feasibility residual\label{subfig:ot-gap-100}]{
  \includegraphics[width=0.30\linewidth]{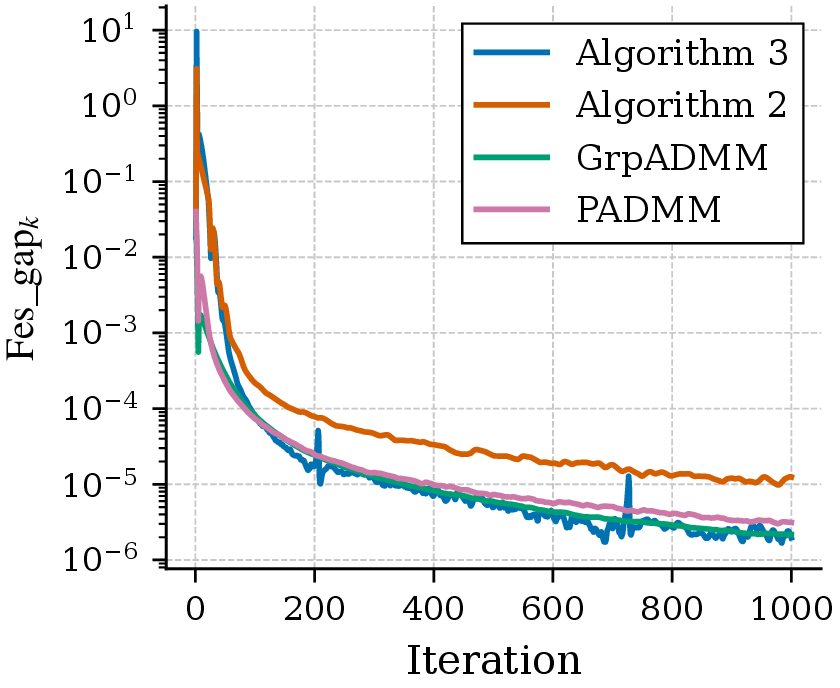}
}

\vspace{0.6em}

\subfloat[KKT residual \eqref{eq:uot-kkt-residual}\label{subfig:ot-feas-30}]{
  \includegraphics[width=0.30\linewidth]{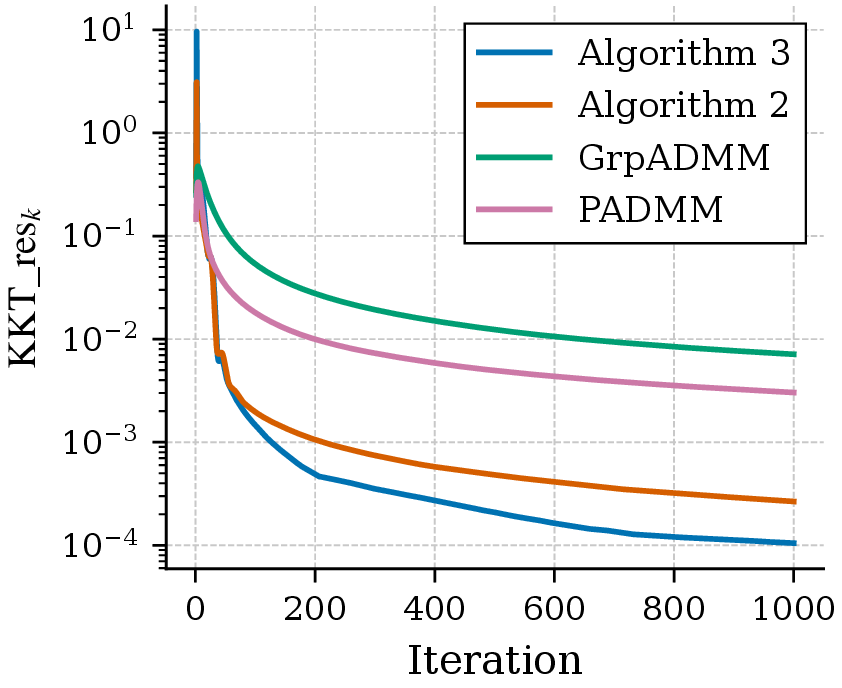}
}\hfill
\subfloat[Primal step-size ($\tau_k$)\label{subfig:ot-gap-30}]{
  \includegraphics[width=0.30\linewidth]{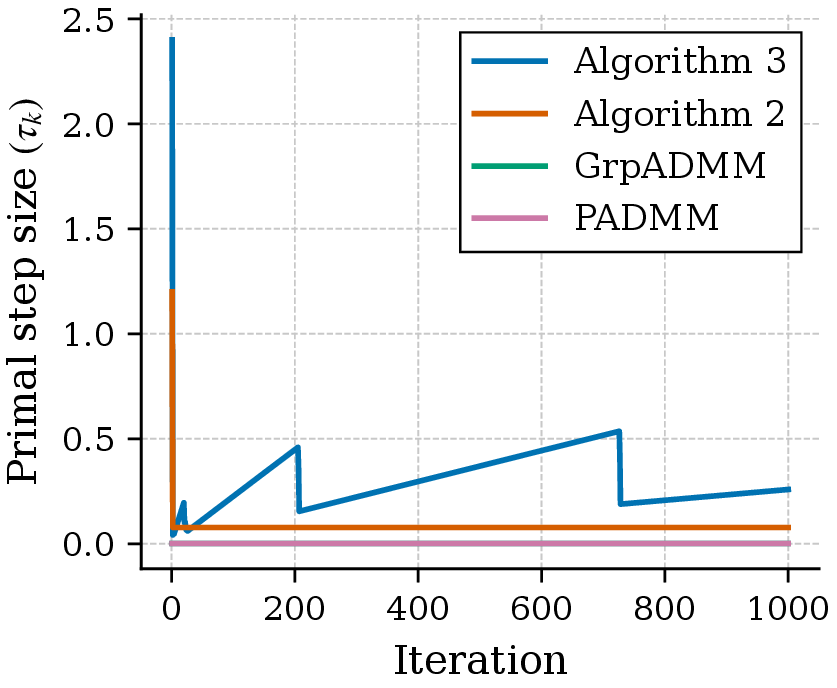}
}

\caption{Convergence plot summaries for the unbalanced optimal transport experiment with dimension $(n_s, n_t)=(1000,500)$.}
\label{fig:transport_plan_residuals_ns_1000}
\end{figure}

The entries of $a\in\mathbb{R}^{n_s}$ and $b\in\mathbb{R}^{n_t}$ are sampled independently from the uniform distribution on $(0,1)$, and then normalized so that $a\in\Delta^{n_s}$ and $b\in\Delta^{n_t}$. Let $x=\mathrm{vec}(X)\in\mathbb{R}^{n_sn_t}$ denote the row-major vectorization of $X$, namely
\[
\mathrm{vec}(X)
=
[X_{1,1},\ldots,X_{1,n_t},\ X_{2,1},\ldots,X_{2,n_t},\ \ldots,\ X_{n_s,1},\ldots,X_{n_s,n_t}]^\top,
\]
and set $c=\mathrm{vec}(C)$. Define the linear operator $A:\mathbb{R}^{n_sn_t}\to\mathbb{R}^{n_s+n_t}$ by
\[
(Ax)_i=\sum_{j=1}^{n_t}X_{ij},\qquad i=1,\ldots,n_s,
\]
and
\[
(Ax)_{n_s+j}=\sum_{i=1}^{n_s}X_{ij},\qquad j=1,\ldots,n_t.
\]
Thus, $Ax$ stacks the row sums and column sums of the transport plan. If we define $\hat b:=\begin{bmatrix} a \\ b \end{bmatrix}\in\mathbb{R}^{n_s+n_t},$ and introduce an auxiliary variable $w\in\mathbb{R}^{n_s+n_t}$, then the problem can be written in the split form
\begin{equation*}
  \min_{x,w}\ \Phi(x,w)
  :=
  \underbrace{\langle c,x\rangle+\iota_{\mathbb{R}_+^{n_sn_t}}(x)}_{g(x)}
  +
  \underbrace{\frac{\gamma}{2}\|w\|_2^2}_{f(w)}
  \quad\text{subject to}\quad
  Ax+w=\hat b.
\end{equation*}
Here $\iota_{\mathbb{R}_+^{n_sn_t}}$ denotes the indicator function of the nonnegative orthant. In the implementation, the operator $A$ is applied implicitly through row and column summations rather than formed as a dense matrix, which is especially important for the larger instances reported below. In our experiments, we consider two problem sizes
\[
(n_s,n_t)=(1000,500)
\qquad\text{and}\qquad
(n_s,n_t)=(2000,1000).
\]
Hence $A\in\mathbb{R}^{1500 \times 500000}$ and $A\in\mathbb{R}^{3000 \times 2000000}$, respectively.
We set $\gamma=1$, and initialize all methods with $x^0=0\in\mathbb{R}^{n_sn_t},~
w^0=0\in\mathbb{R}^{n_s+n_t},~
y^0=0\in\mathbb{R}^{n_s+n_t},$ and run each method for $1000$ iterations. The algorithmic parameters are chosen as follows:
\begin{itemize}
  \item \textbf{Algorithm \ref{alg:2}}: $S=I,~T=\eta_w I, \tau_0=1.2,~\eta_w=10^{-2}$.

  \item \textbf{Algorithm \ref{alg:extened psi}}: $S=I,~T=0,~\psi=1.75,~\beta=1,~\mu=0.68,~\tau_0=1.2.$
  \item \textbf{GrpADMM}: $S=I,~T=0,~\sigma=1,~\tau=\frac{\varphi}{\sigma\|A\|^2},~ \varphi=\frac{1+\sqrt5}{2}.$

  \item \textbf{PADMM}: $S=I,~T=0,~\sigma=1,~ \tau=\frac{0.99}{\sigma\|A\|^2}.$

\end{itemize}
\begin{figure}[htbp]
\centering

\subfloat[Algorithm~\ref{alg:2}\label{fig:plan-alg2}]{
  \includegraphics[width=0.44\textwidth]{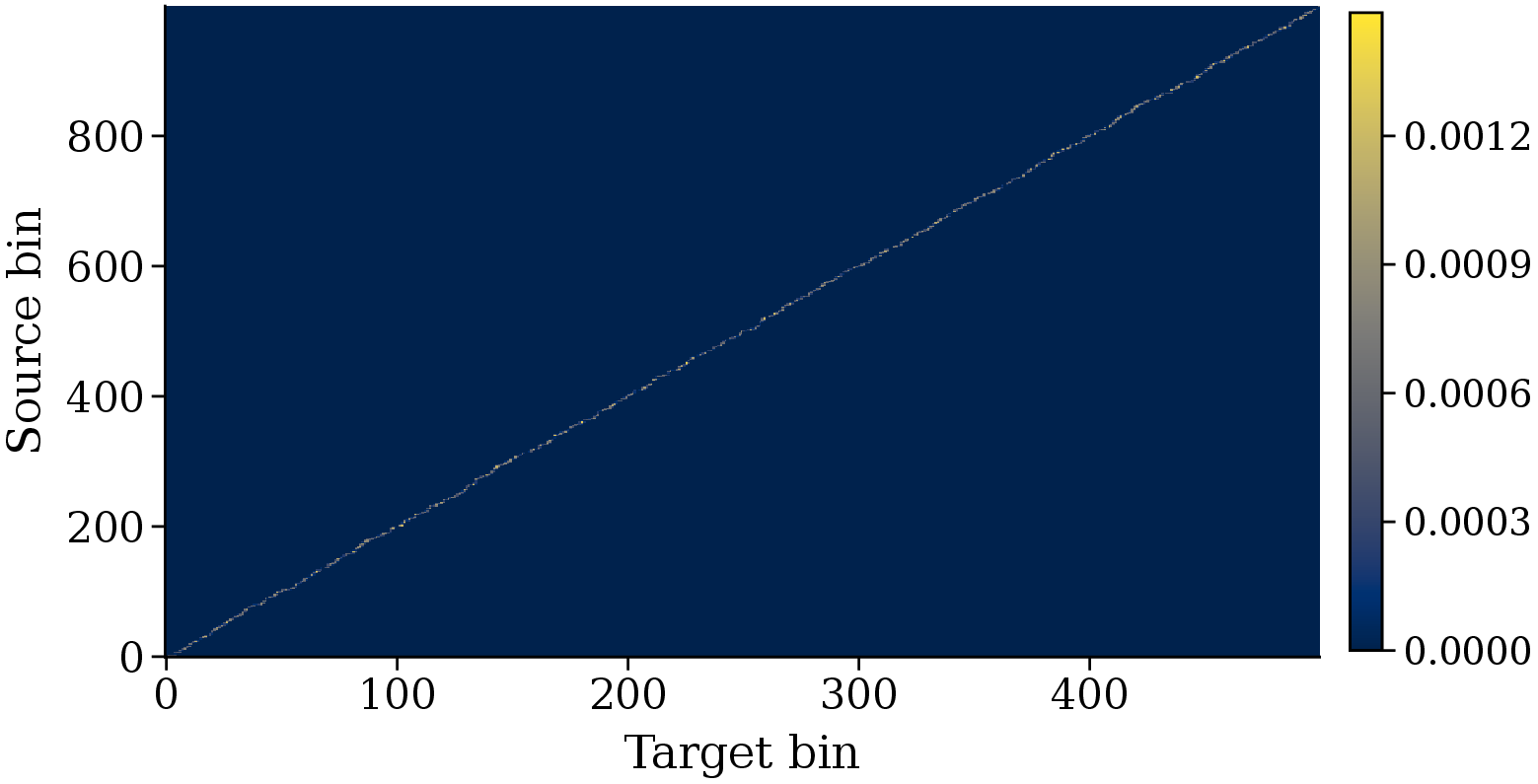}
}\hfill
\subfloat[Algorithm~\ref{alg:extened psi}\label{fig:plan-alg1}]{
  \includegraphics[width=0.44\textwidth]{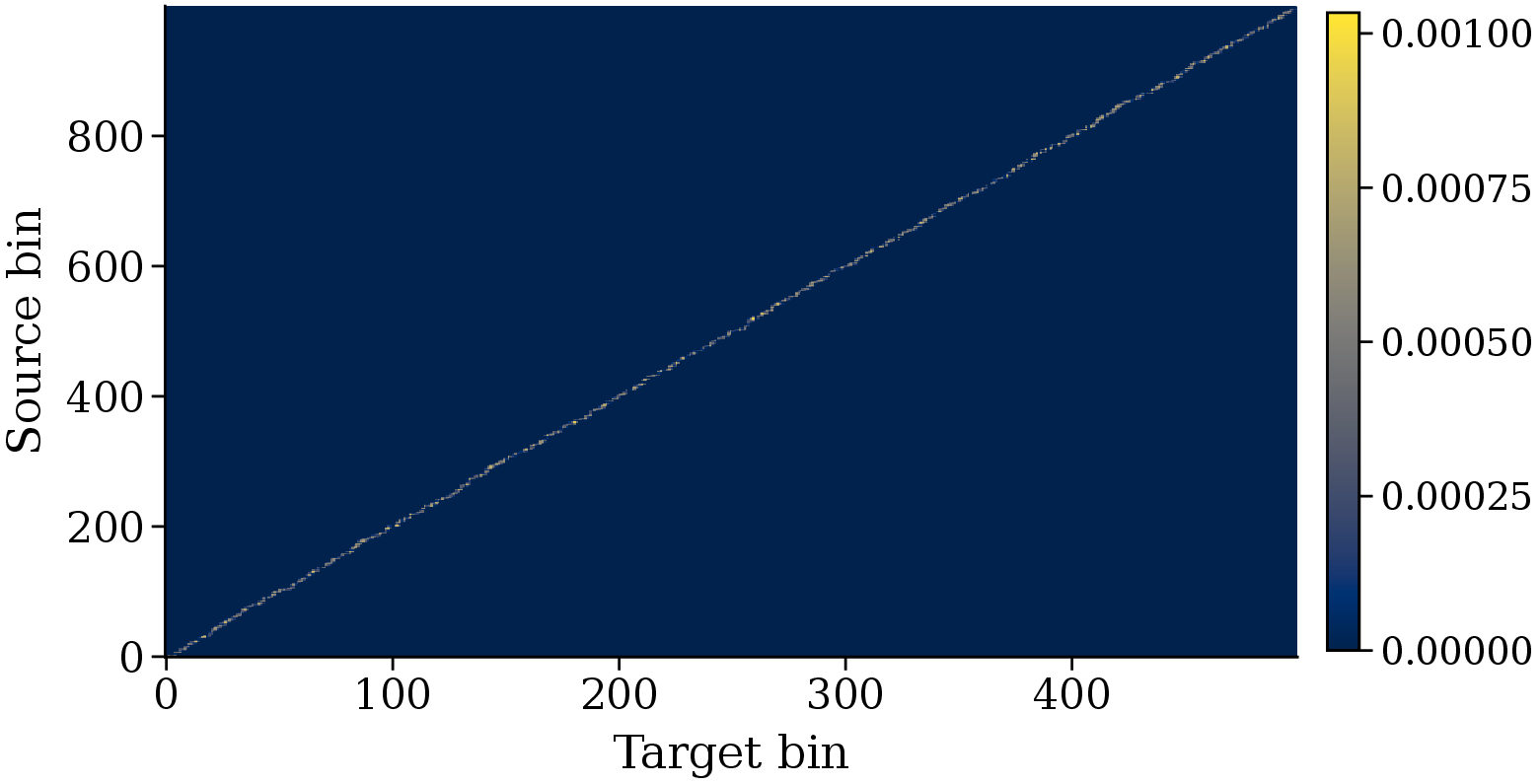}
}

\vspace{0.75em}

\subfloat[GrpADMM\label{fig:plan-grpadmm}]{
  \includegraphics[width=0.44\textwidth]{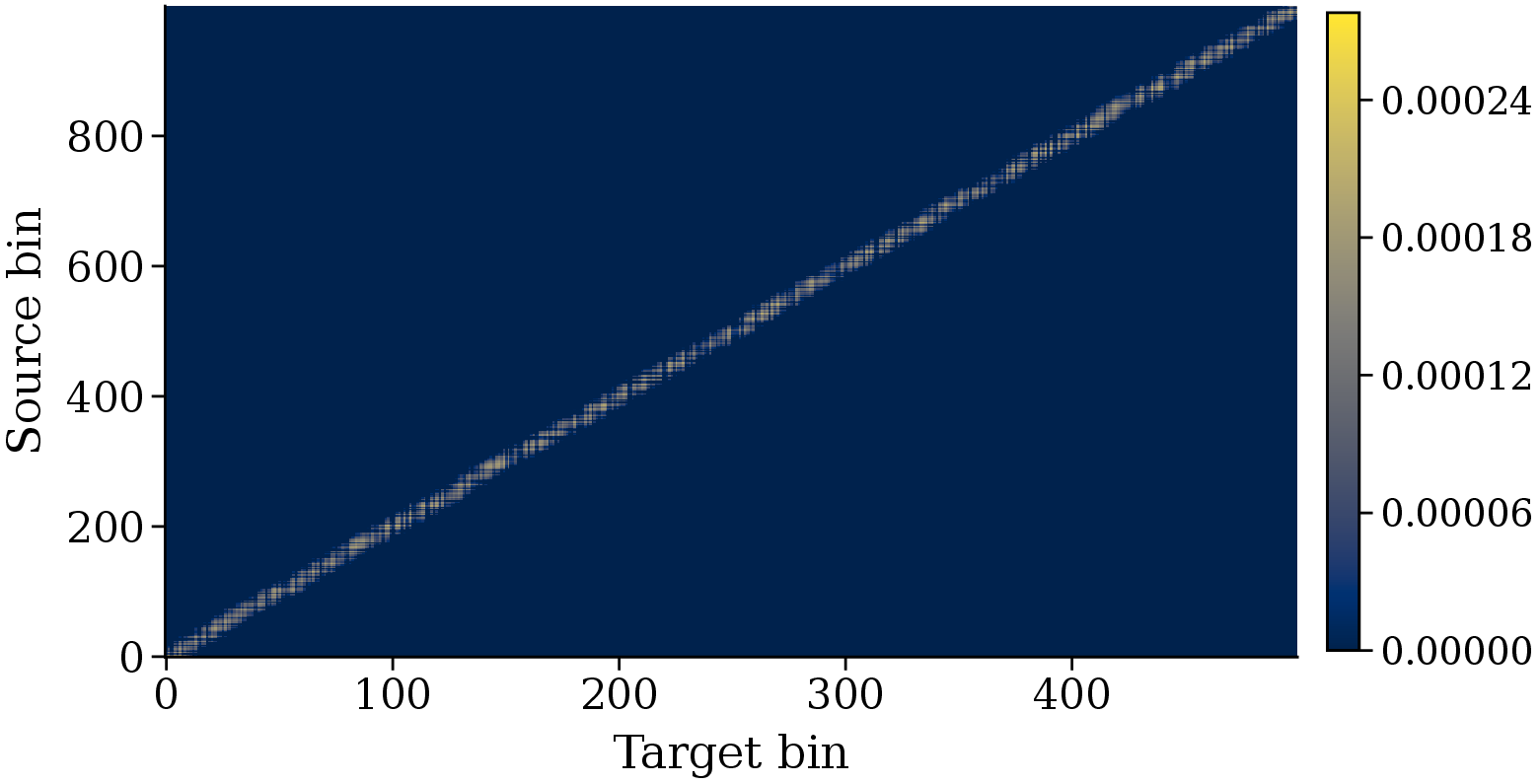}
}\hfill
\subfloat[PADMM\label{fig:plan-padmm}]{
  \includegraphics[width=0.44\textwidth]{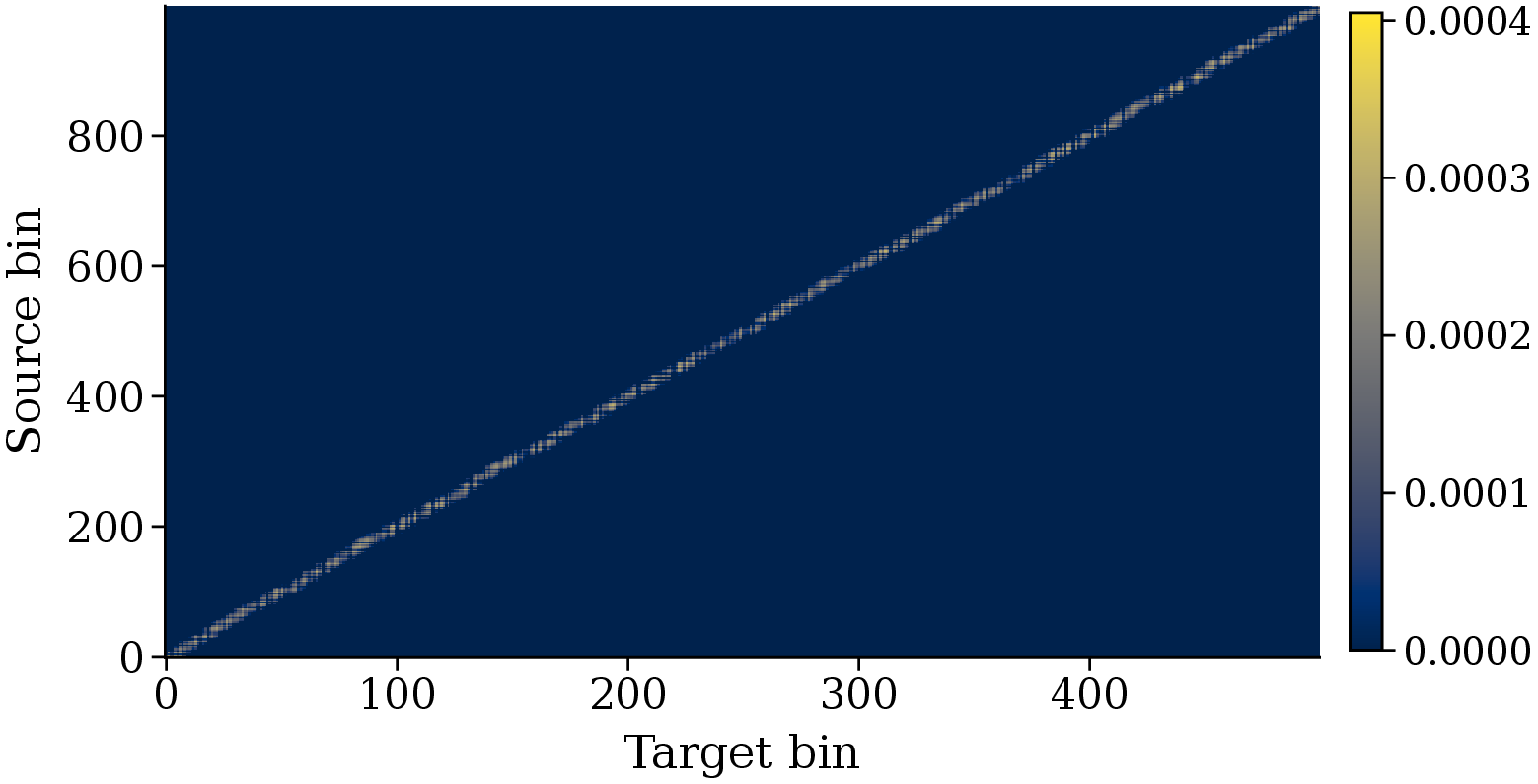}
}

\caption{Transport plans as heatmaps (rows: sources, columns: targets). 
Brighter cells mean more mass sent from $i$ to $j$.
All panels share a common colour scale; the near-diagonal structure is essentially identical across methods, indicating comparable solution quality. Here the problem size is $(n_s, n_t)=(1000,500)$.}
\label{fig:transport-plans_bin_ns_1000}
\end{figure}
The $x$-updates in  Algorithm \ref{alg:extened psi} reduce to shifted Euclidean projections onto the nonnegative orthant
\[
x^{k+1}
=
\Pi_{\mathbb{R}_+^{\,n_sn_t}}
\bigl(u^k-\tau_k(A^\top y^k+c)\bigr).
\]
For PADMM, the $x$-update is the usual linearised step
\[
x^{k+1}
=
\Pi_{\mathbb{R}_+^{\,n_sn_t}}
\!\left(
x^k-\tau\Big[c+A^\top\!\big(y^k+\sigma(Ax^k+w^k-\hat b)\big)\Big]
\right).
\]
The $w$-updates also admit closed forms. Let $z^k:=\hat b-Ax^{k+1}-\frac{1}{\sigma_k}y^k.$ Then with the nonzero proximal weight $T=\eta_w I$, the  $w$-subproblem of Algorithm \ref{alg:2} becomes
\begin{equation}\label{eq:uot-w-update-alg3}
w^{k+1}
=
\frac{\sigma_k z^k + (\eta_w/\sigma_k) w^k}
{\gamma+\sigma_k+\eta_w/\sigma_k}.
\end{equation}
Thus, compared with the standard shrinkage step, Algorithm \ref{alg:2} incorporates a mild memory term toward the previous iterate $w^k$. This slight damping effect stabilises the marginal-correction variable and leads to visibly improved practical behaviour, especially in the feasibility and KKT residuals. For Algorithm \ref{alg:extened psi}, GrpADMM, and PADMM, we set $\eta_w=0$, so the $w$-update reduces to the simpler shrinkage formula
\begin{equation*}
w^{k+1}
=
\frac{\sigma_k}{\sigma_k+\gamma}
\left(
\hat b-Ax^{k+1}-\frac{1}{\sigma_k}y^k
\right),
\end{equation*}
with $\sigma_k\equiv \sigma~\forall k$ in the fixed step-size counterparts. For this experiment, the combined KKT residual \eqref{eq:kkt-residual} simplifies to
\begin{equation}\label{eq:uot-kkt-residual}
\mathrm{KKT\_res}_k
:=
\sqrt{(r_k^x)^2 + (r_k^w)^2 + (r_k^p)^2},
\end{equation}
where
\[
r_k^x
=
\left\|
x^k-\Pi_{\mathbb{R}_+^{n_sn_t}}(x^k-A^\top y^k-c)
\right\|,~~
r_k^w
=
\left\|
w^k-\frac{1}{1+\gamma}(w^k-y^k)
\right\|,~~r_k^p=\|Ax^k+w^k-\hat b\|.
\]

\begin{figure}[htbp]
\centering

\subfloat[Relative objective gap \label{subfig:ot-feas-100}]{
  \includegraphics[width=0.30\linewidth]{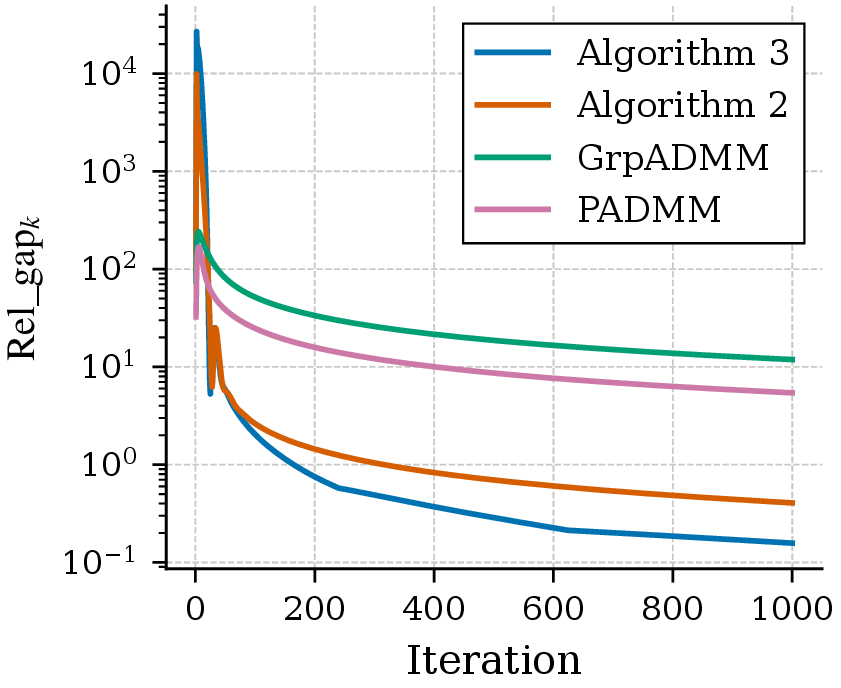}
}\hfill
\subfloat[Feasibility residual\label{subfig:ot-gap-100}]{
  \includegraphics[width=0.30\linewidth]{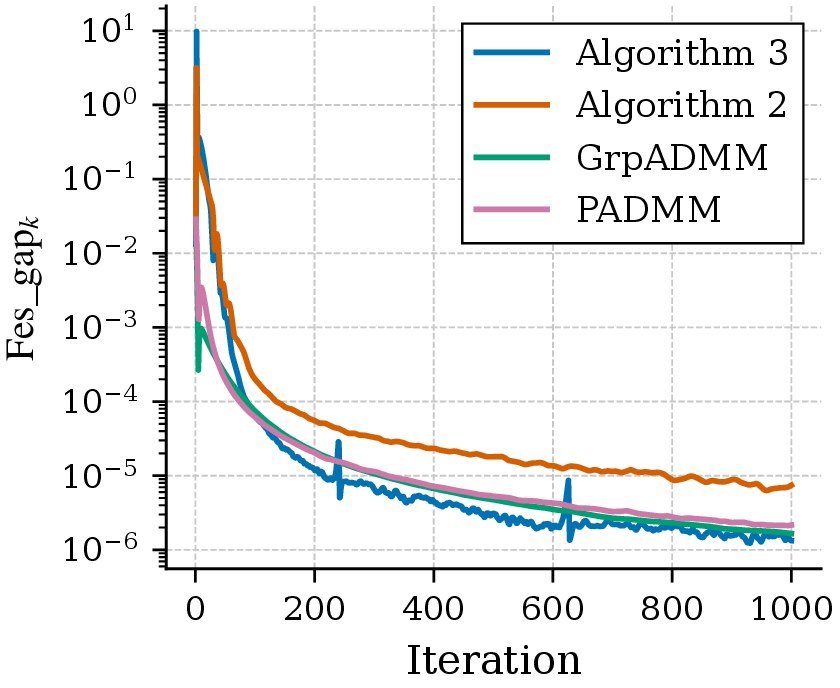}
}

\vspace{0.6em}

\subfloat[KKT residual \eqref{eq:uot-kkt-residual}\label{subfig:ot-feas-30}]{
  \includegraphics[width=0.30\linewidth]{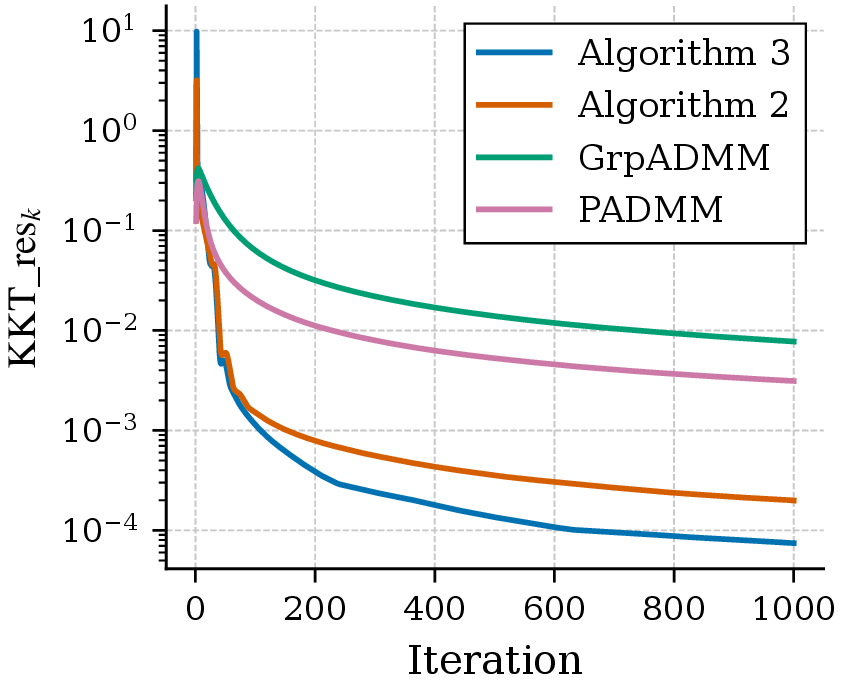}
}\hfill
\subfloat[Primal step-size ($\tau_k$)\label{subfig:ot-gap-30}]{
  \includegraphics[width=0.30\linewidth]{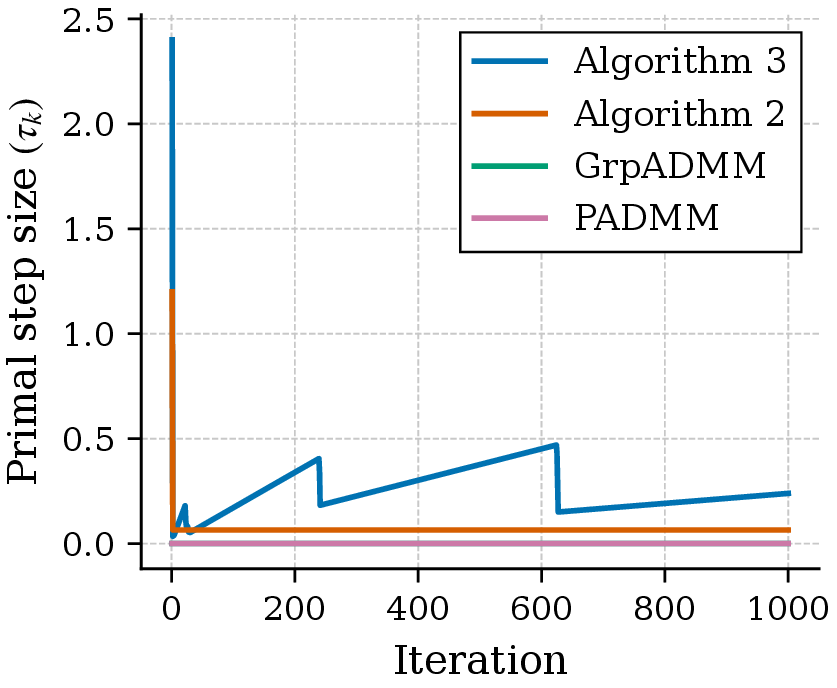}
}

\caption{Convergence plot summaries for the unbalanced optimal transport experiment with dimension $(n_s, n_t)=(2000,1000)$.}
\label{fig:transport_plan_residual_ns_2000}
\end{figure}

\begin{figure}[htbp]
\centering

\subfloat[Algorithm~\ref{alg:2}\label{fig:plan-alg2}]{
  \includegraphics[width=0.44\textwidth]{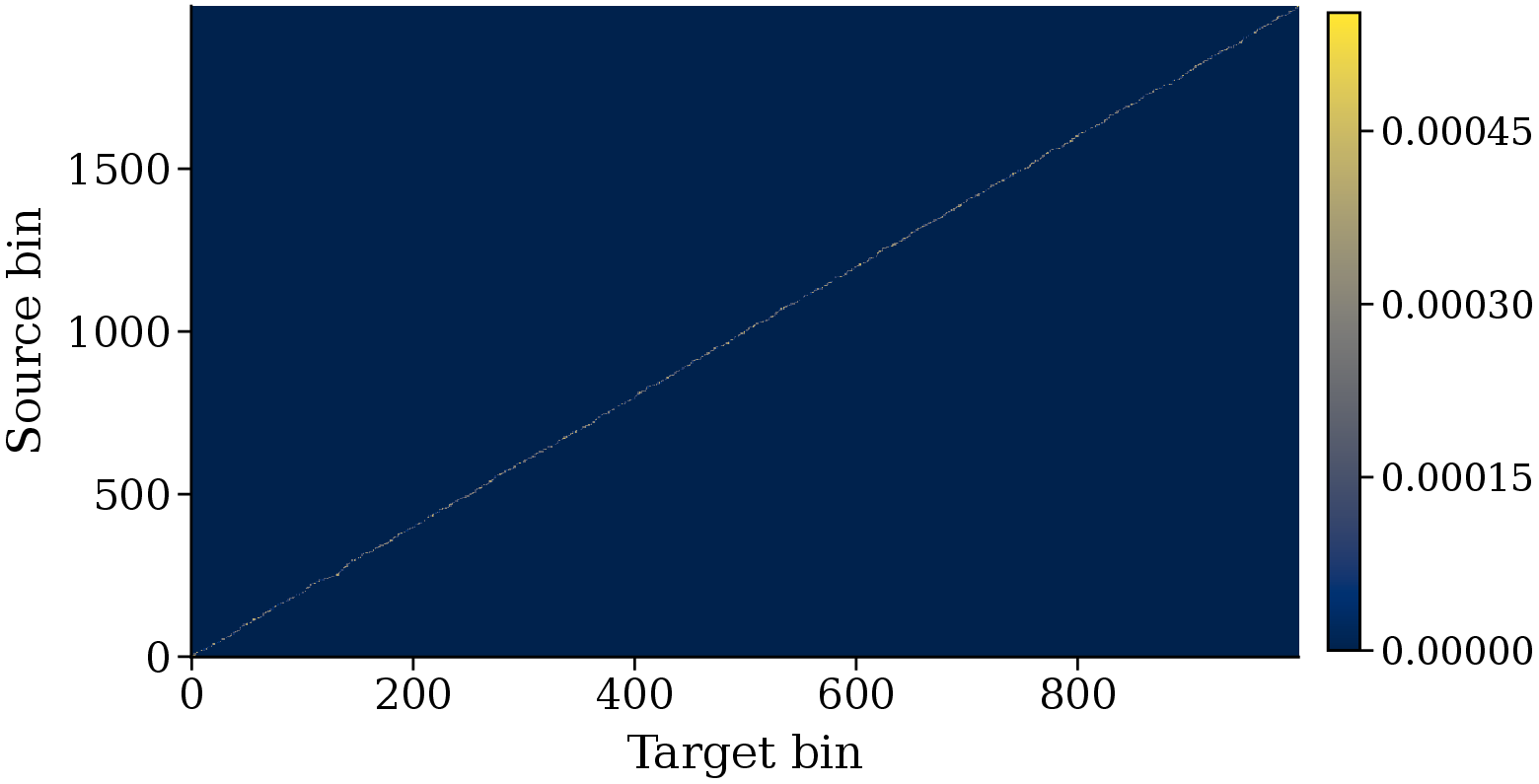}
}\hfill
\subfloat[Algorithm~\ref{alg:extened psi}\label{fig:plan-alg1}]{
  \includegraphics[width=0.44\textwidth]{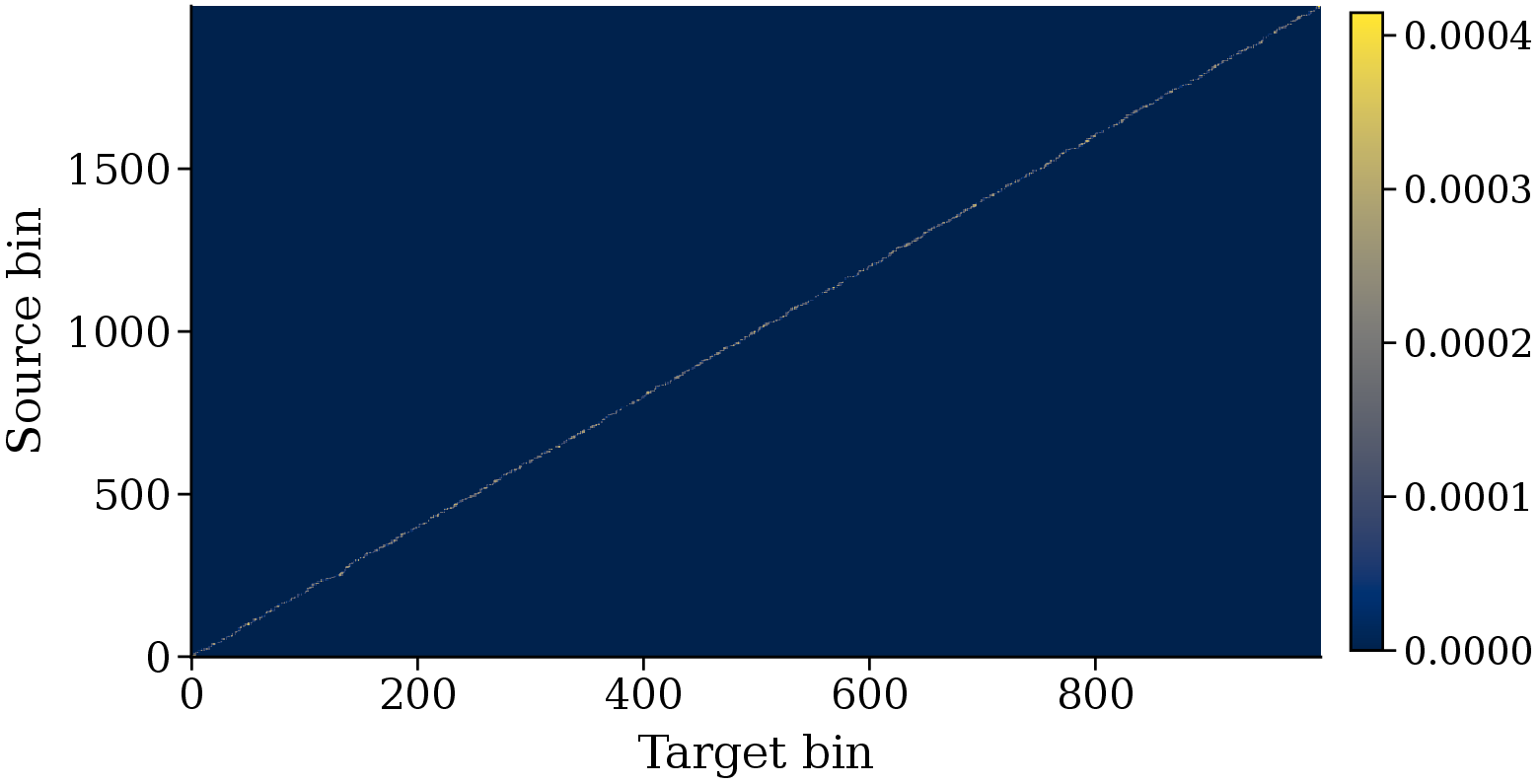}
}

\vspace{0.75em}

\subfloat[GrpADMM\label{fig:plan-grpadmm}]{
  \includegraphics[width=0.44\textwidth]{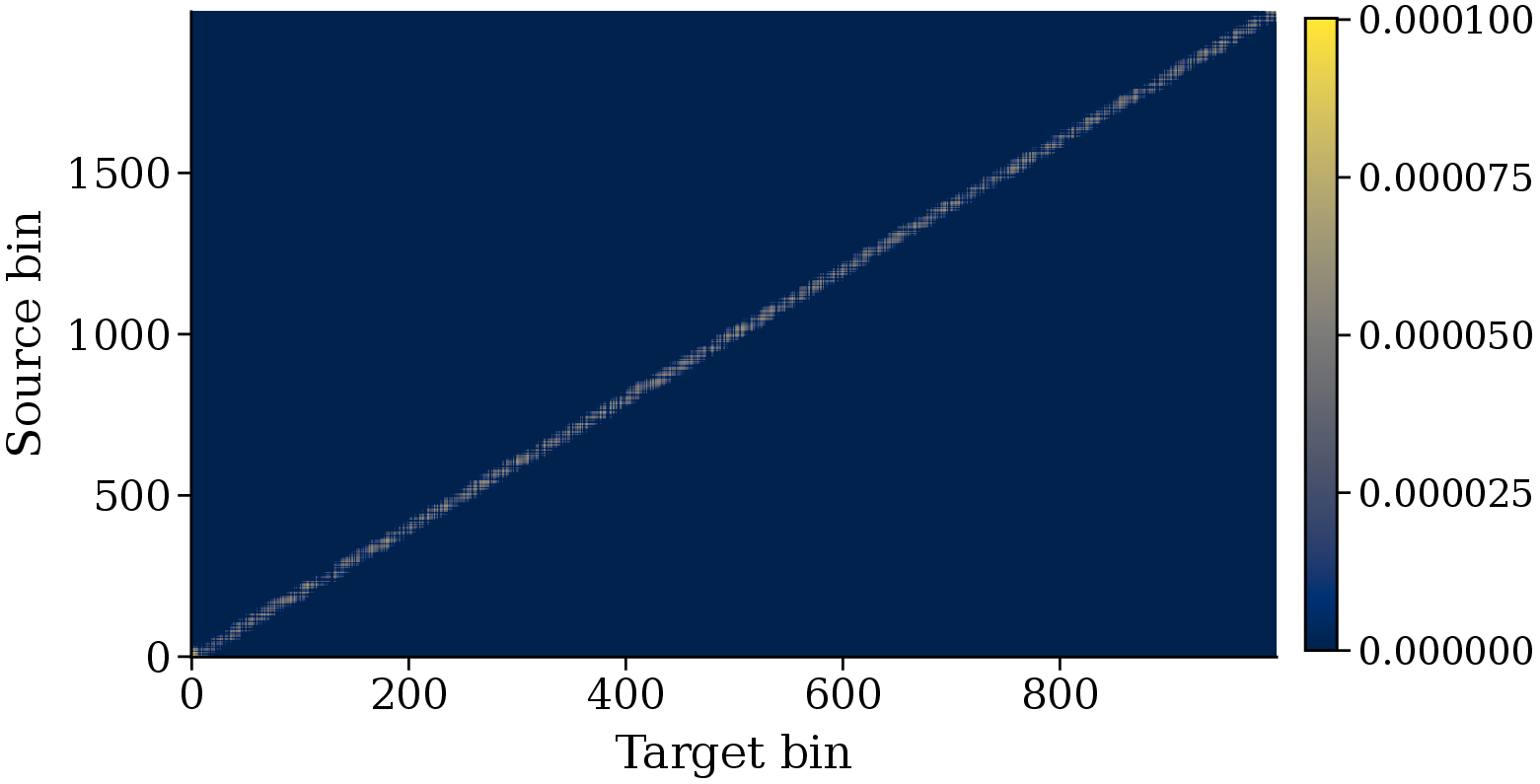}
}\hfill
\subfloat[PADMM\label{fig:plan-padmm}]{
  \includegraphics[width=0.44\textwidth]{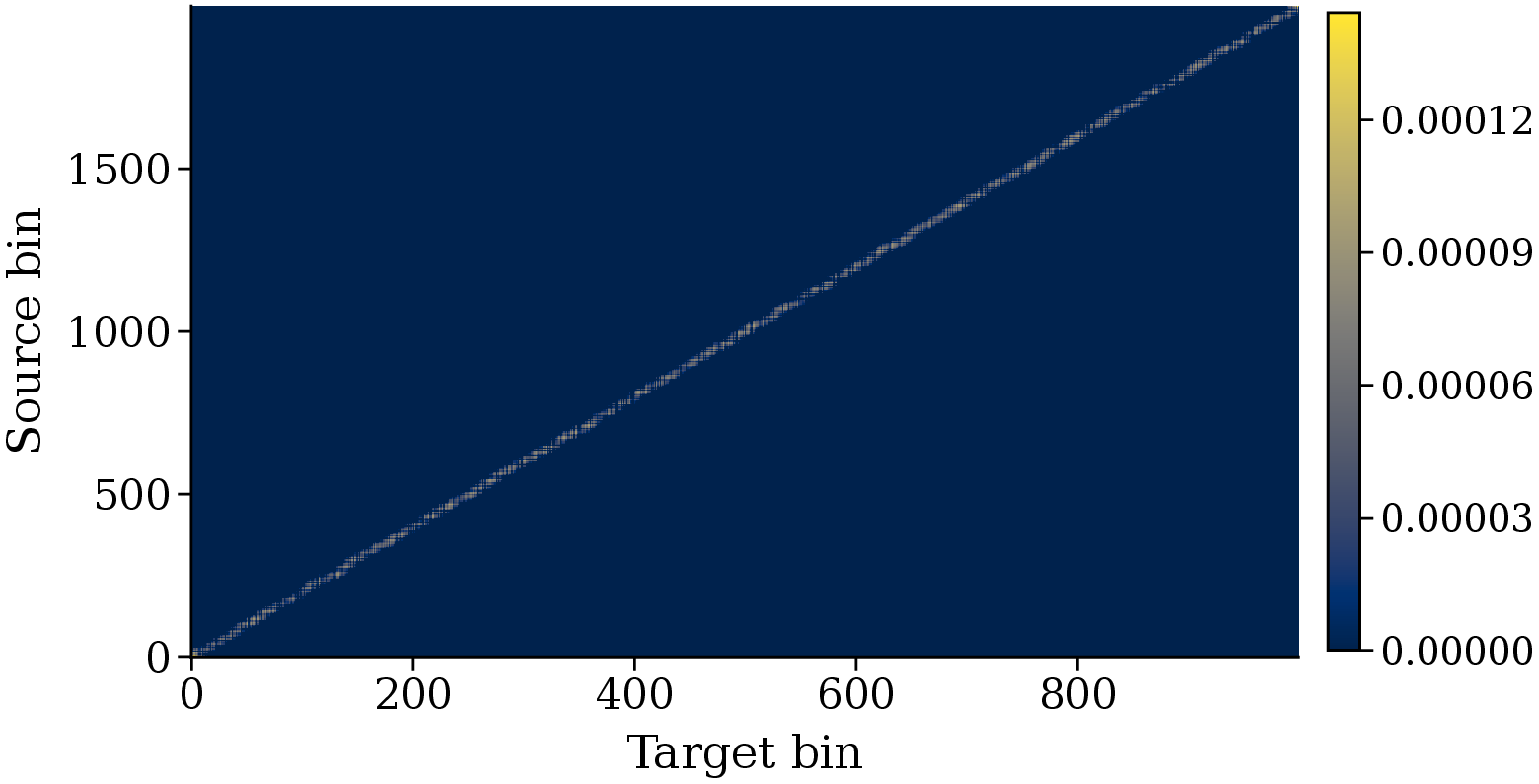}
}

\caption{Transport plans as heatmaps (rows: sources, columns: targets). 
Brighter cells mean more mass sent from $i$ to $j$.
All panels share a common colour scale; the near-diagonal structure is essentially identical across methods, indicating comparable solution quality. Here the problem size is $(n_s, n_t)=(2000,1000)$.}
\label{fig:transport-plans_bin_ns_2000}
\end{figure}
We tested two instances of different problem dimensions. Figure~\ref{fig:transport_plan_residuals_ns_1000} and \ref{fig:transport_plan_residual_ns_2000}  display the computed transport plans $X\in\mathbb{R}_+^{n_s\times n_t}$ as heatmaps, where rows correspond to source bins, columns to target bins, and colour intensity represents transported mass. Because the ground cost is quadratic, mass transport over short distances is favoured, and therefore, the dominant mass is concentrated near the diagonal. From Figure \ref{fig:transport-plans_bin_ns_1000} and \ref{fig:transport-plans_bin_ns_2000}, it can be seen that all four methods recover transport plans with the same near-diagonal structure, which is consistent with the geometry of the problem. The essential difference lies not in the final pattern of the plan, but in the speed at which that plan is reached. In particular, Figures~\ref{fig:transport_plan_residuals_ns_1000} and \ref{fig:transport_plan_residual_ns_2000}  show that Algorithm \ref{alg:2} exhibits the most favourable practical performance among the tested methods, which we attribute in part to its non-decreasing step-size rule and in part to the additional damping induced by \eqref{eq:uot-w-update-alg3}. This damping appears to suppress oscillations in the auxiliary variable $w$, thereby improving the decay of the feasibility and combined KKT residuals while preserving the same final transport structure as the baseline methods.

\subsection{Graph-fused regression problems}
\label{sec:real_data_graph_regression}

To further assess the practical performance of the proposed methods, we consider graph-fused regression models on two real datasets. The objective is to estimate a collection of local linear models while simultaneously encouraging neighbouring samples in a data-dependent graph to share similar regression coefficients. This leads to a network-lasso type formulation \cite{hallac2015network}, which fits naturally into the linearly constrained separable convex optimisation framework studied in this paper.

Let $N$ denote the number of samples, and let $a_i\in\mathbb{R}^d$ and $c_i\in\mathbb{R}$ be the feature vector and response associated with sample $i$, respectively. We construct an undirected graph $G=(V,E)$, where $V=\{1,\dots,N\}$ and $E$ is obtained from a $k$-nearest neighbour graph built from suitable contextual variables of the dataset. For each node $i$, we associate a local coefficient vector $x_i\in\mathbb{R}^d$, and for each edge $e=(i,j)\in E$, we introduce an auxiliary variable $w_e\in\mathbb{R}^d$. We then consider the following problem
\begin{align}\label{eq:real_data_graph_fused_model_rewrite}
\min_{\{x_i\}_{i=1}^N,\{w_e\}_{e\in E}}&
\left\{
\sum_{i=1}^N
\left(
\frac12(a_i^\top x_i-c_i)^2+\frac{\mu_x}{2}\|x_i\|_2^2
\right)
+\lambda\sum_{e=(i,j)\in E}\gamma_e\|w_e\|_2
\right\}\nonumber\\
&\text{subject to}\quad
x_i-x_j-w_e=0,\qquad \forall e=(i,j)\in E.
\end{align}
Here $\mu_x>0$ is a ridge parameter, $\lambda>0$ is the fusion regularisation parameter, and $\gamma_e>0$ is an edge weight. By defining
\[
x=(x_1,\dots,x_N)\in\mathbb{R}^{Nd}
~~\text{and}~~
w=(w_e)_{e\in E}\in\mathbb{R}^{|E|d},
\]
problem \eqref{eq:real_data_graph_fused_model_rewrite} can be written in the form \eqref{eq:model} as
\[
\min_{x,w}\; g(x)+f(w)
\quad\text{subject to}\quad
Ax-w=0,
\]
where
\[
g(x)
=
\sum_{i=1}^N
\left(
\frac12(a_i^\top x_i-c_i)^2+\frac{\mu_x}{2}\|x_i\|_2^2
\right),
\quad
f(w)=\lambda\sum_{e\in E}\gamma_e\|w_e\|_2,
\quad
B=-I.
\]
\begin{figure}[htbp]
\centering

\subfloat[Relative objective gap\label{fig:plan-alg2}]{
  \includegraphics[width=0.30\textwidth]{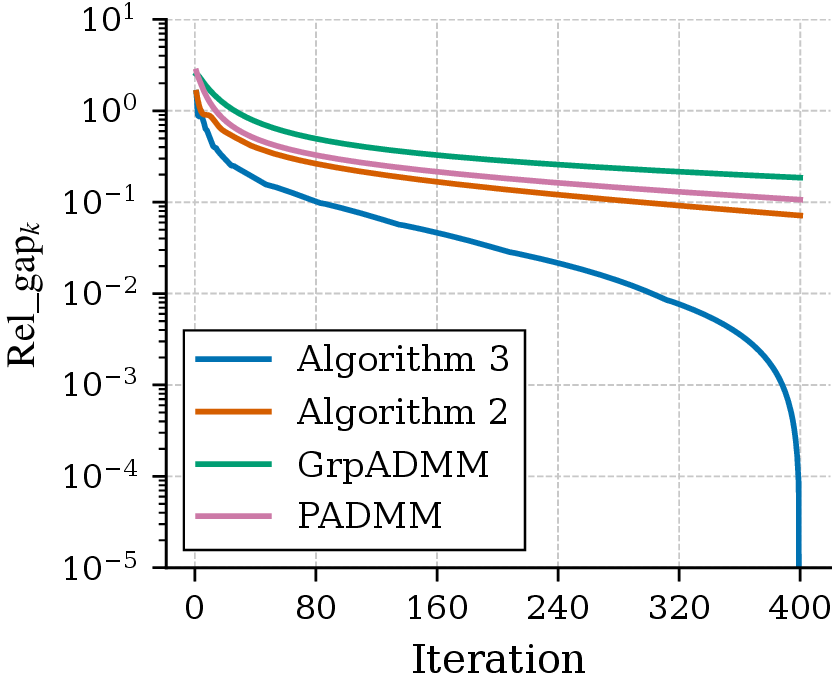}
}\hfill
\subfloat[Feasibility residual\label{fig:plan-alg1}]{
  \includegraphics[width=0.30\textwidth]{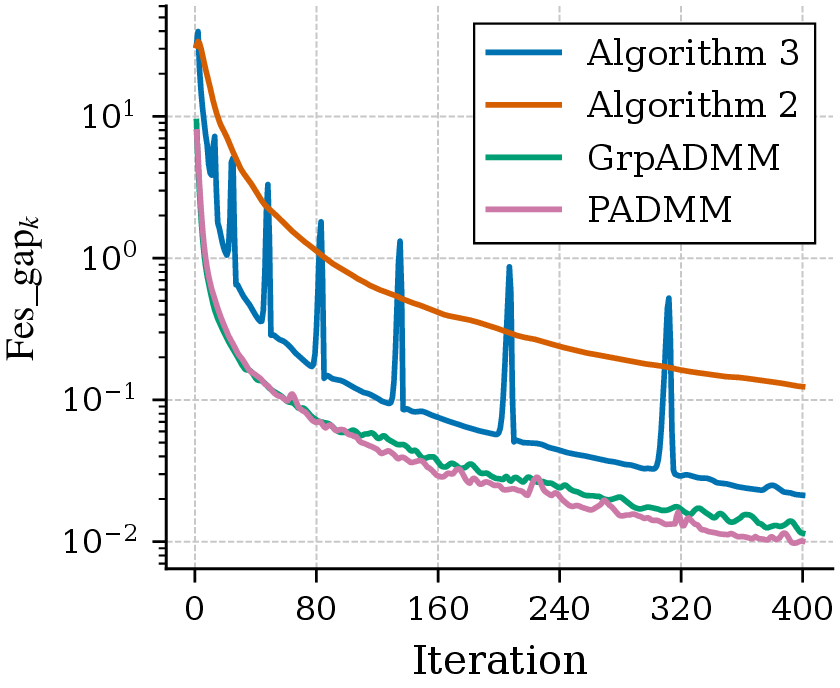}
}

\vspace{0.75em}

\subfloat[KKT residual\label{fig:plan-grpadmm}]{
  \includegraphics[width=0.30\textwidth]{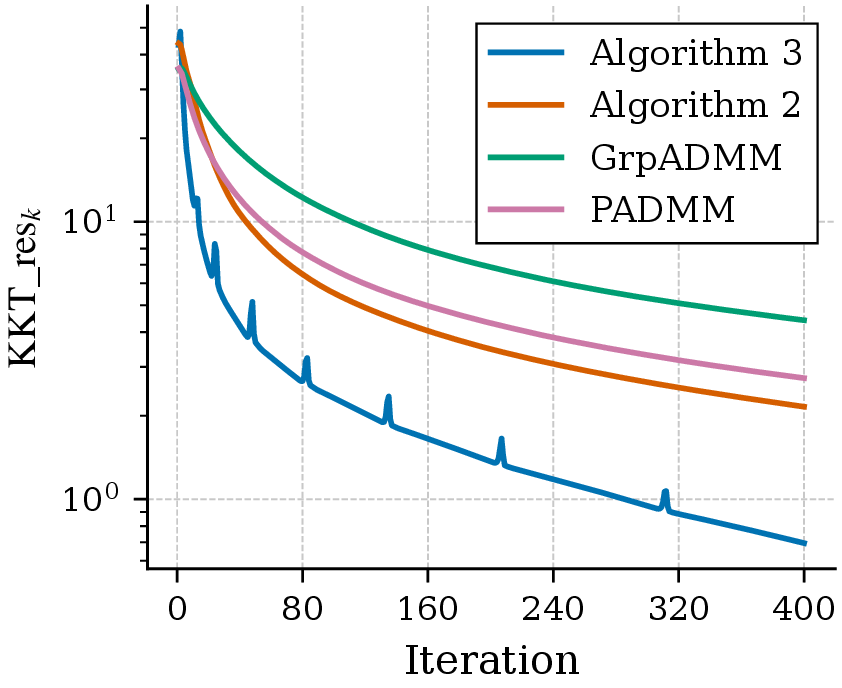}
}\hfill
\subfloat[Primal step-sizes\label{fig:plan-padmm}]{
  \includegraphics[width=0.30\textwidth]{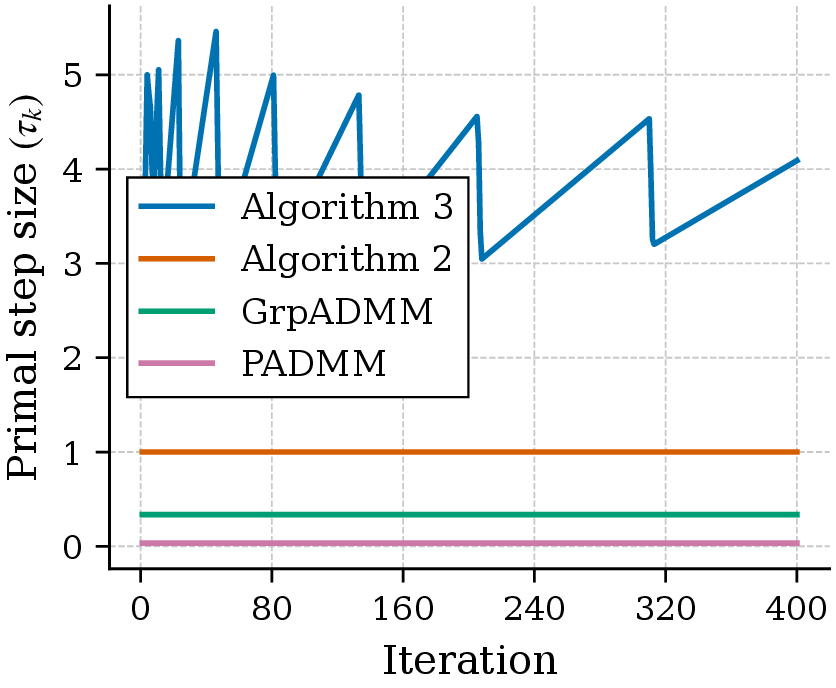}
}

\caption{Performance comparison of Algorithm~\ref{alg:2} , Algorithm~\ref{alg:extened psi}, \ref{eq:GrpADMM}, and \ref{eq:PADMM} on the California Housing dataset.}
\label{fig:california_datset}
\end{figure}
If $D$ denotes the oriented incidence matrix of the graph $G$, then we define $A := D \otimes I_d,$ where $\otimes$ denotes the Kronecker product and $I_d$ is the $d \times d$ identity matrix. Equivalently, $A$ is the block matrix whose $(i,j)$-th block is $D_{ij} I_d$. Thus, the operator $A$ couples neighbouring local models through the graph, while the nonsmooth term $f$ promotes agreement of the coefficients along adjacent edges. For Algorithms~\ref{alg:extened psi}, \ref{alg:2}, and \eqref{eq:GrpADMM}, we use the structured proximal matrices
\begin{equation}\label{eq:real_data_ST_choice_rewrite}
S=\operatorname{blkdiag}\bigl((\deg(i)+\delta)I_d\bigr)_{i=1}^N,
\qquad
T=\operatorname{blkdiag}\bigl(t_0(1+\gamma_e)I_d\bigr)_{e\in E},
\end{equation}
with $\delta>0$ and $t_0>0$. This choice is well-suited to graph-induced problems as $A^\top A$ inherits the degree pattern of the graph Laplacian, so the diagonal matrix $S$ provides a simple preconditioner for the $x$-block, while a positive matrix $T$ stabilises the $w$-block on heterogeneous graphs. With this choice, the $x$-subproblem remains nodewise separable, and each node update can be evaluated efficiently using a rank-one inverse update of Sherman--Morrison type \cite{ShermanMorrison1950,HornJohnson2013}. The $w$-subproblem is also separable over the edges and reduces to a vector soft-thresholding step. In the present graph-fused setting, the KKT residual \eqref{eq:kkt-residual} simplifies to 
\begin{equation*}
\mathrm{KKT\_res}_k
=
\left(
\|Ax_k-w_k\|_2^2
+
\bigl\|x_k-\operatorname{prox}_g(x_k-A^\top y_k)\bigr\|_2^2
+
\bigl\|w_k-\operatorname{prox}_f(w_k+y_k)\bigr\|_2^2
\right)^{1/2}.
\end{equation*}

\begin{table}[htbp]
\centering
\caption{Runtime and graph-consensus statistics for the California Housing graph-fused regression problem.}
\label{tab:california_results_short}
\begin{tabular}{lccc}
\toprule
Method & Runtime (s) & Fused-edge fraction & Consensus components \\
\midrule
Algorithm~\ref{alg:2}            & 82.893 & $3.820265\times 10^{-1}$ & 10462 \\
Algorithm~\ref{alg:extened psi}  & 65.444 & $3.522974\times 10^{-1}$ & 10931 \\
GrpADMM                          & 58.393 & $3.271104\times 10^{-1}$ & 11647 \\
PADMM                            & 62.242 & $3.542092\times 10^{-1}$ & 11123 \\
\bottomrule
\end{tabular}
\end{table}
In addition, we record the fraction of fused edges and the number of connected components obtained after thresholding the edge variables by $\|w_e\|\le 10^{-3}$. These measures provide a useful summary of how strongly each method promotes graph consensus. We set $\delta=t_0=0.25$ and other parameters selected in this experiment are as follows.

\begin{itemize} 
\item \textbf{Algorithm~\ref{alg:2}:} $S$ and $T$ are chosen as in \eqref{eq:real_data_ST_choice_rewrite}, $\beta=0.1$, $\tau_0=1$. \item \textbf{Algorithm~\ref{alg:extened psi}:} $S$ and $T$ are again chosen as in \eqref{eq:real_data_ST_choice_rewrite}, $\psi=1.65$, $\beta=0.1$, $\mu=0.8$, and $\tau_0=1$. 
\item \textbf{\ref{eq:GrpADMM}:} $S$ and $T$ are again chosen as in \eqref{eq:real_data_ST_choice_rewrite}, $\psi=\varphi$, $\sigma_{\rm grp}=1.4$, and $\tau_{\rm grp} = 0.95\, \frac{\psi\,\lambda_{\min}(S)}{\sigma_{\rm grp}\|A\|^2}$. 
\item \textbf{\ref{eq:PADMM}:} Set $S_{\rm pad} = \frac{1}{\tau_{\rm pad}}I-\sigma_{\rm pad}A^\top A,$ $ T \text{ chosen as in }\eqref{eq:real_data_ST_choice_rewrite},$ and $\sigma_{\rm pad}=1.4$ and $\tau_{\rm pad} = \frac{0.95}{\sigma_{\rm pad}\|A\|^2}$. \end{itemize}

\paragraph{\textbf{California Housing dataset:}} We first consider the California Housing dataset \cite{pace1997sparse}, which contains $N=20640$ observations describing demographic and housing characteristics of California districts. To construct a graph-fused regression model with a meaningful spatial interpretation, we use the six nonspatial covariates
\[
\texttt{MedInc},\ \texttt{HouseAge},\ \texttt{AveRooms},\ \texttt{AveBedrms},\ \texttt{Population},\ \texttt{AveOccup},
\]
as local predictors, while the response is taken to be $\texttt{MedHouseVal}$. The geographical variables $\texttt{Latitude}$ and $\texttt{Longitude}$ are used only to build the graph. After standardisation, we construct a $6$-nearest-neighbour graph from the spatial coordinates, encouraging nearby districts to share similar local regression coefficients. Accordingly, each node variable satisfies $x_i\in\mathbb{R}^{6}$. The edge weights are chosen as
\[
\gamma_e=\exp\!\left(-\frac{d_e}{\operatorname{median}\{d_{e'}:e'\in E\}}\right),
\]
where $d_e$ denotes the Euclidean distance associated with edge $e$. We set $\mu_x=10^{-2}$ and $\lambda=0.18$. The convergence behavior is displayed in Figures~\ref{fig:california_datset}--\ref{fig:california_cluster}. Figure~\ref{fig:california_datset} shows that Algorithm~\ref{alg:2} attains the smallest relative objective residual and the smallest combined KKT residual. Although GrpADMM and PADMM achieve slightly better feasibility residuals, this advantage is not reflected in the objective residual or in the overall KKT residual. Finally, Figure~\ref{fig:california_cluster} illustrates that the resulting consensus pattern is spatially meaningful over the geographical coordinates. Further, Table~\ref{tab:california_results_short} reports runtime and two graph-structural summaries. Among all methods, Algorithm~\ref{alg:2} achieves the largest fused-edge fraction and the fewest connected components, indicating the strongest overall consensus. Algorithm~\ref{alg:extened psi} remains competitive in runtime, but it induces a weaker clustering effect. GrpADMM is the fastest method on this dataset, though at the price of a visibly weaker performance in the convergence plots.

\begin{figure}[htbp]
\centering

\subfloat[Target values\label{fig:plan-target}]{
  \includegraphics[width=0.3\textwidth]{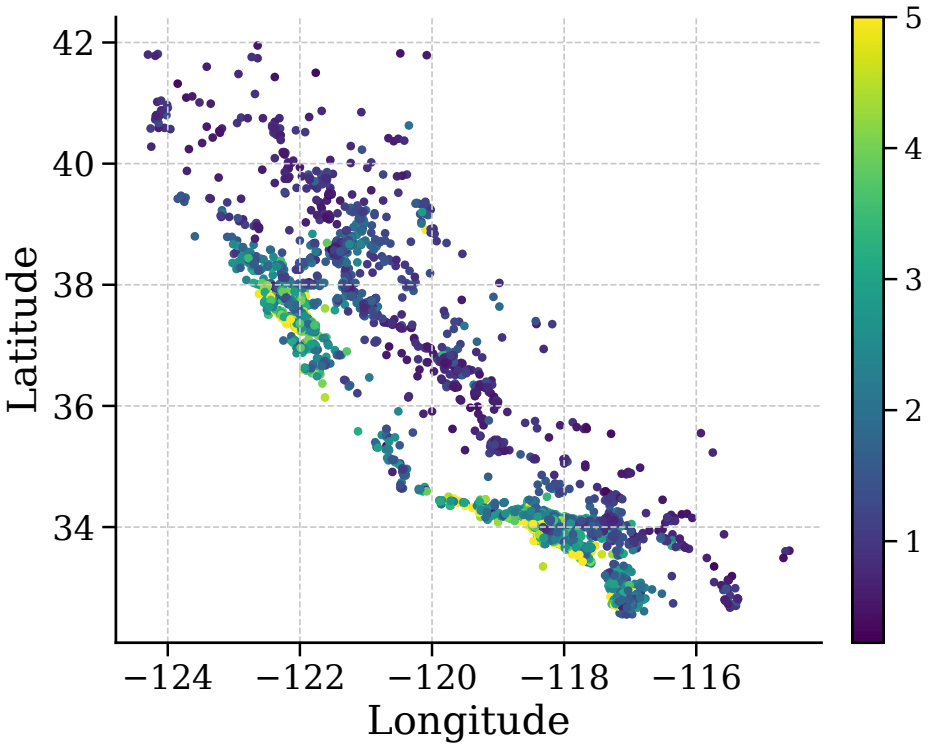}
}
\hfill
\subfloat[Algorithm~\ref{alg:2}\label{fig:plan-alg2}]{
  \includegraphics[width=0.3\textwidth]{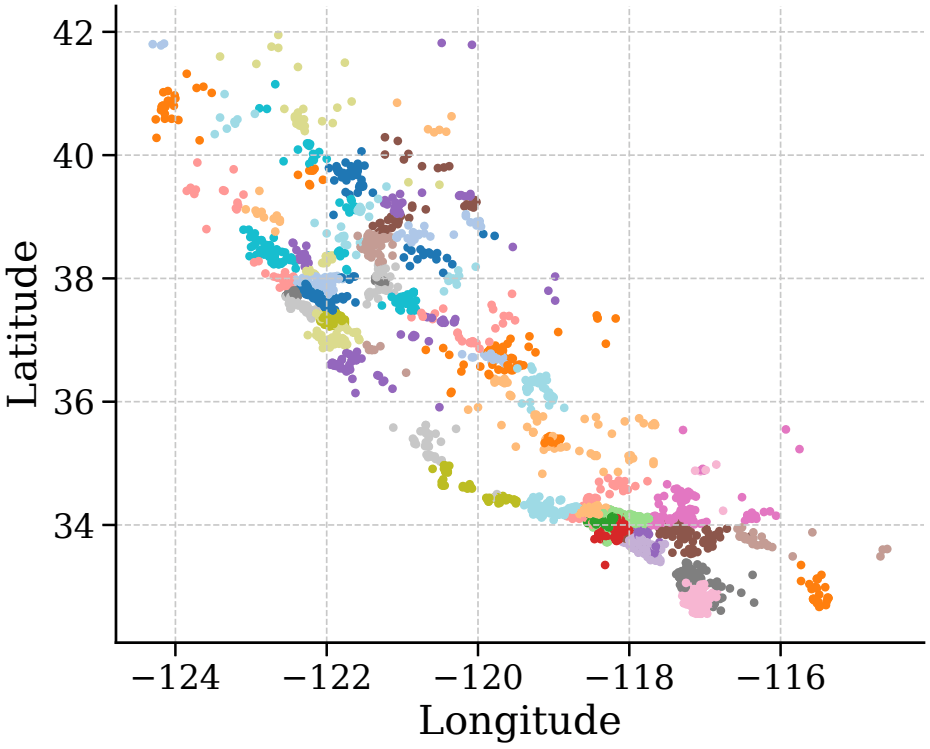}
}

\vspace{0.8em}

\subfloat[Algorithm~\ref{alg:extened psi}\label{fig:plan-alg1}]{
  \includegraphics[width=0.29\textwidth]{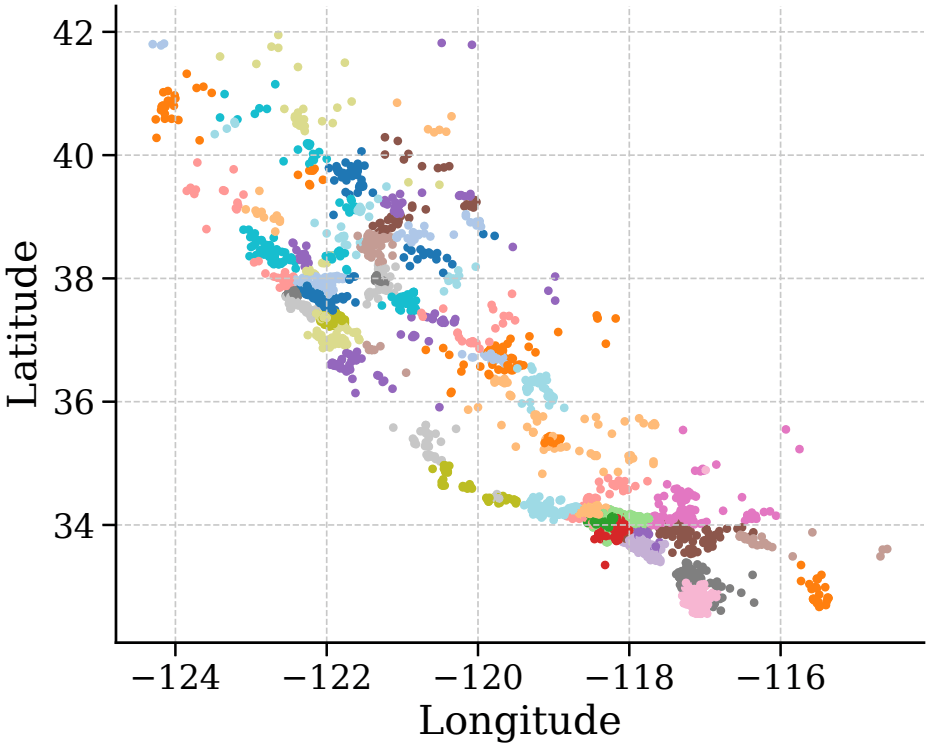}
}
\hfill
\subfloat[PADMM\label{fig:plan-padmm}]{
  \includegraphics[width=0.29\textwidth]{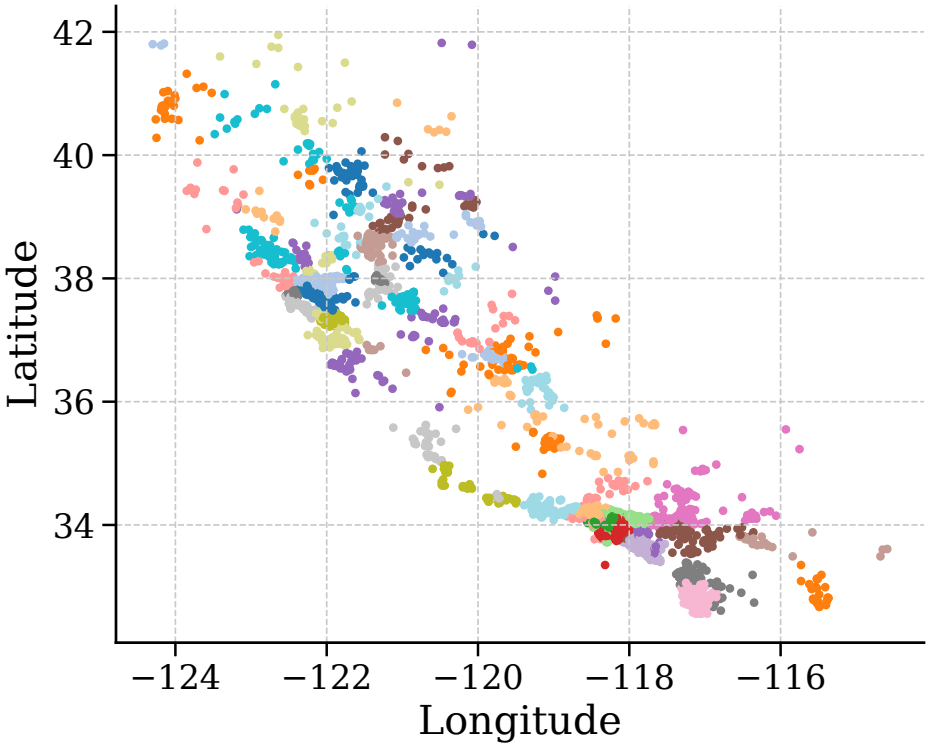}
}
\hfill
\subfloat[GrpADMM\label{fig:plan-grpadmm}]{
  \includegraphics[width=0.29\textwidth]{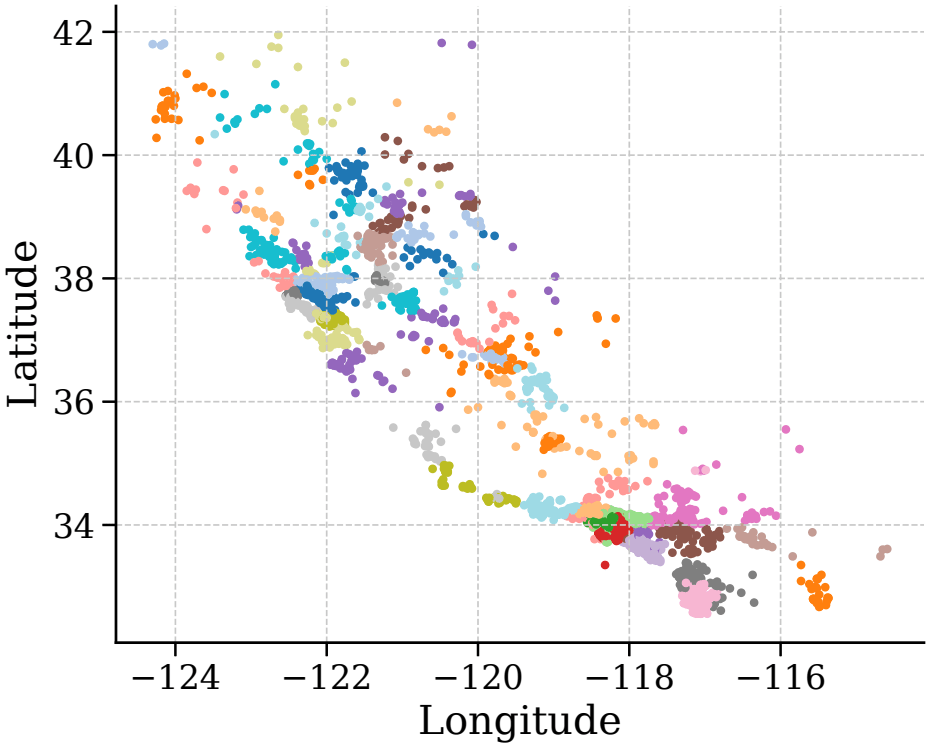}
}

\caption{Visualisation of the California Housing graph-fused regression experiment over the geographical coordinates. Panel (a) shows the response variable \texttt{MedHouseVal}, while panels (b)--(e) display the consensus clusters produced by the four Algorithms. Each colour represents one connected component of the graph obtained after thresholding the edge variables by $\|w_e\|\le 10^{-3}$.}
\label{fig:california_cluster}
\end{figure}

\paragraph{\textbf{Forest CoverType dataset:}} As a substantially larger benchmark, we next consider the Forest CoverType dataset \cite{blackard1999comparative}, which contains $N=581012$ observations extracted from $30\times 30$ meter forest cells in the Roosevelt National Forest of northern Colorado. The original dataset is intended for classification, but here we repurpose it into a large-scale graph-fused regression problem. Specifically, we take \emph{Elevation} as the response variable and use the remaining nine continuous cartographic variables as local predictors.
\[
\begin{aligned}
&\texttt{Aspect},\ \texttt{Slope},\ \texttt{Horizontal\_Distance\_To\_Hydrology}, \texttt{Vertical\_Distance\_To\_Hydrology},\\
&\texttt{Horizontal\_Distance\_To\_Roadways},\ \texttt{Hillshade\_9am},\ \texttt{Hillshade\_Noon},\mathbb{}\texttt{Hillshade\_3pm},\\
&\texttt{Horizontal\_Distance\_To\_Fire\_Points}.
\end{aligned}
\]
Thus, each node variable satisfies $x_i\in\mathbb{R}^{9}$. To define the graph, we use a subset of terrain descriptors that capture local topographic similarity and construct a $4$-nearest neighbor graph after standardisation. Hence, two forest cells are connected when their terrain profiles are similar, and the graph regularisation promotes similarity of the associated local regression models.

\begin{figure}[htbp]
\centering

\subfloat[Relative objective gap\label{fig:plan-alg2}]{
  \includegraphics[width=0.30\textwidth]{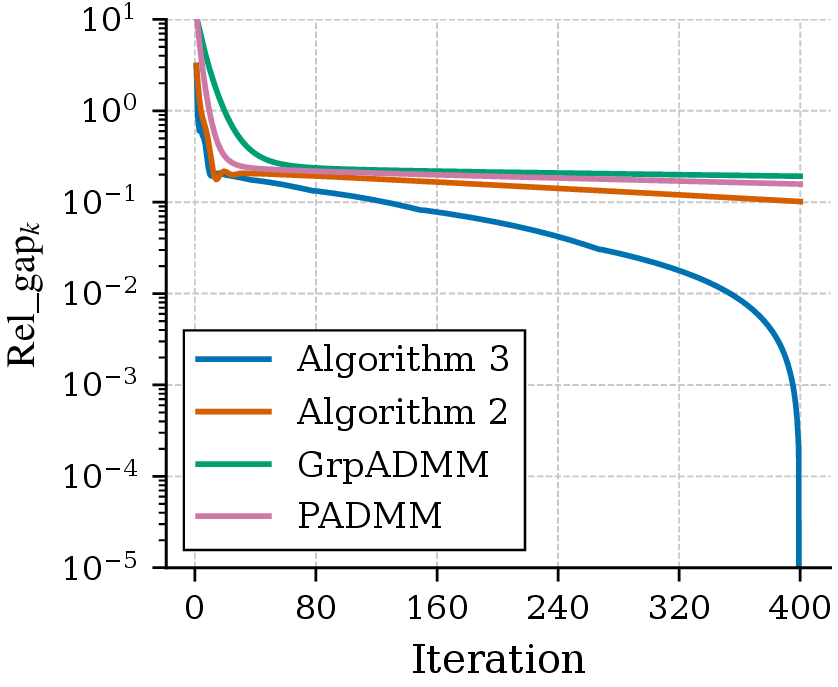}
}\hfill
\subfloat[Feasibility residual\label{fig:plan-alg1}]{
  \includegraphics[width=0.30\textwidth]{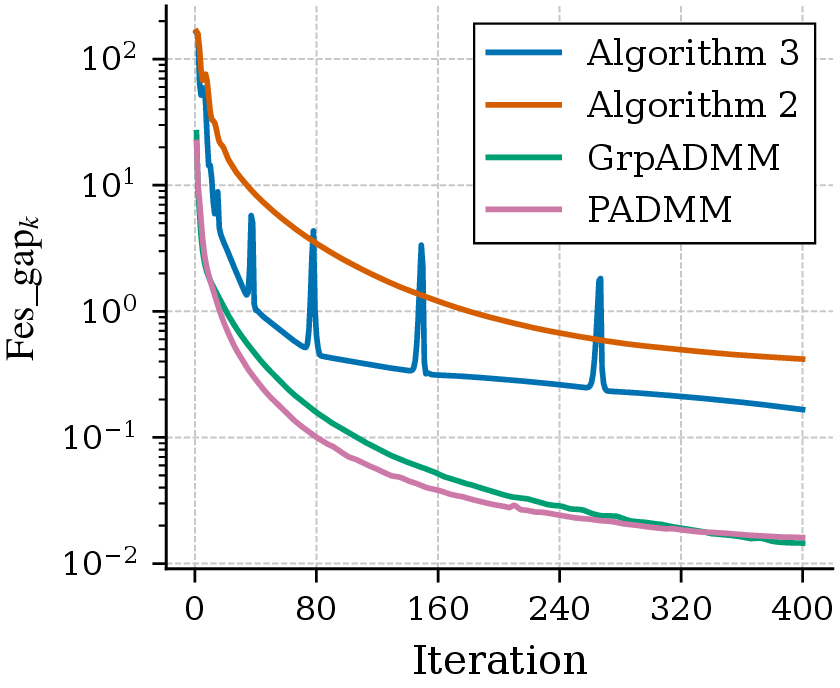}
}

\vspace{0.75em}

\subfloat[KKT residual\label{fig:plan-grpadmm}]{
  \includegraphics[width=0.30\textwidth]{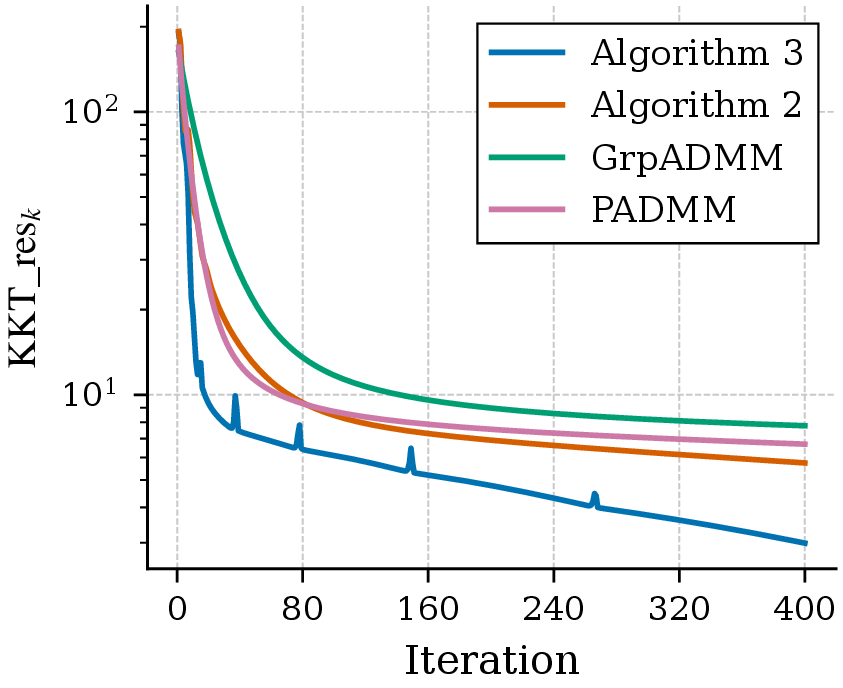}
}\hfill
\subfloat[Primal step-sizes\label{fig:plan-padmm}]{
  \includegraphics[width=0.30\textwidth]{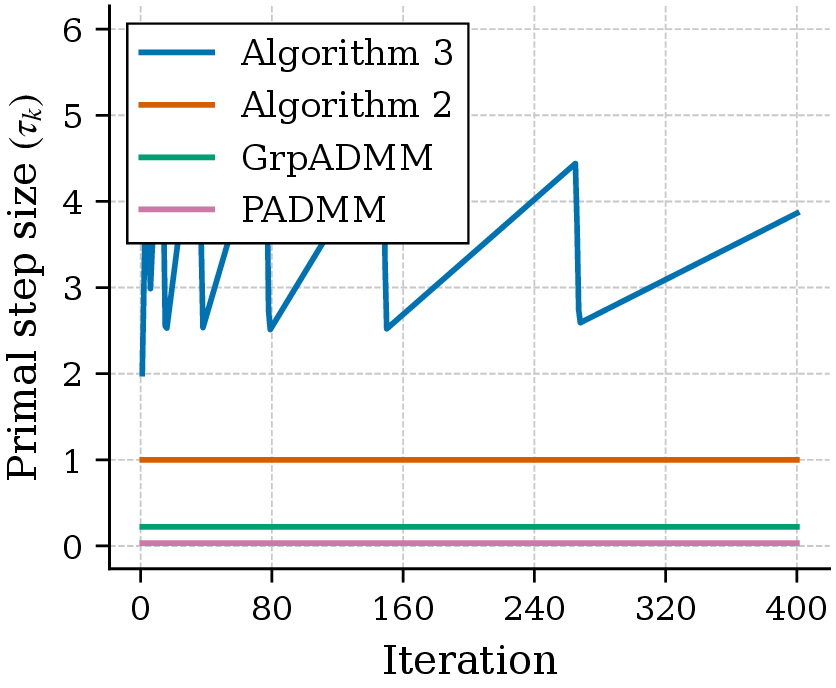}
}

\caption{Performance comparison of Algorithm~\ref{alg:2} , Algorithm~\ref{alg:1}, \ref{eq:GrpADMM}, and \ref{eq:PADMM} on the Forest CoverType dataset.}
\label{fig:Forest_cover_dataset}
\end{figure}

\begin{table}[htbp]
\centering
\caption{Runtime and graph-consensus statistics for the Forest CoverType graph-fused regression problem.}
\label{tab:covtype_results_short}
\begin{tabular}{lccc}
\toprule
Method & Runtime (s) & Fused-edge fraction & Consensus components \\
\midrule
Algorithm~\ref{alg:2}            & 1688.570 & $3.983600\times 10^{-1}$ & 250531 \\
Algorithm~\ref{alg:extened psi}  & 1682.611 & $3.685597\times 10^{-1}$ & 264183 \\
GrpADMM                          & 1533.710 & $3.331452\times 10^{-1}$ & 292807 \\
PADMM                            & 1620.829 & $3.484381\times 10^{-1}$ & 282361 \\
\bottomrule
\end{tabular}
\end{table}

\begin{figure}[htbp]
\centering

\subfloat[Target values\label{fig:plan-target}]{
  \includegraphics[width=0.3\textwidth]{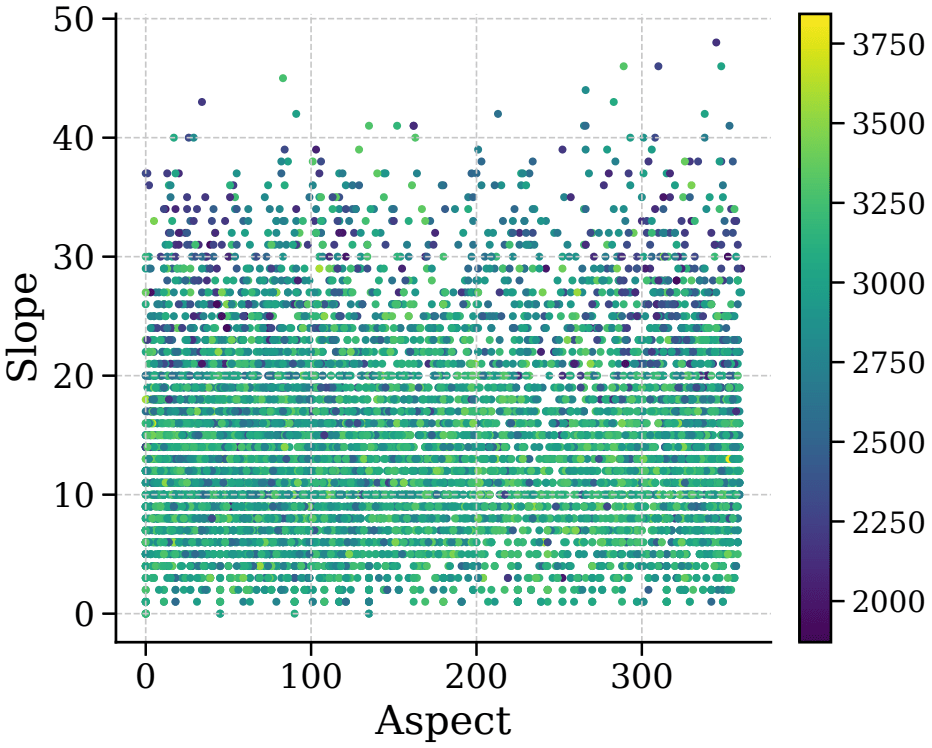}
}
\hfill
\subfloat[Algorithm~\ref{alg:2}\label{fig:plan-alg2}]{
  \includegraphics[width=0.3\textwidth]{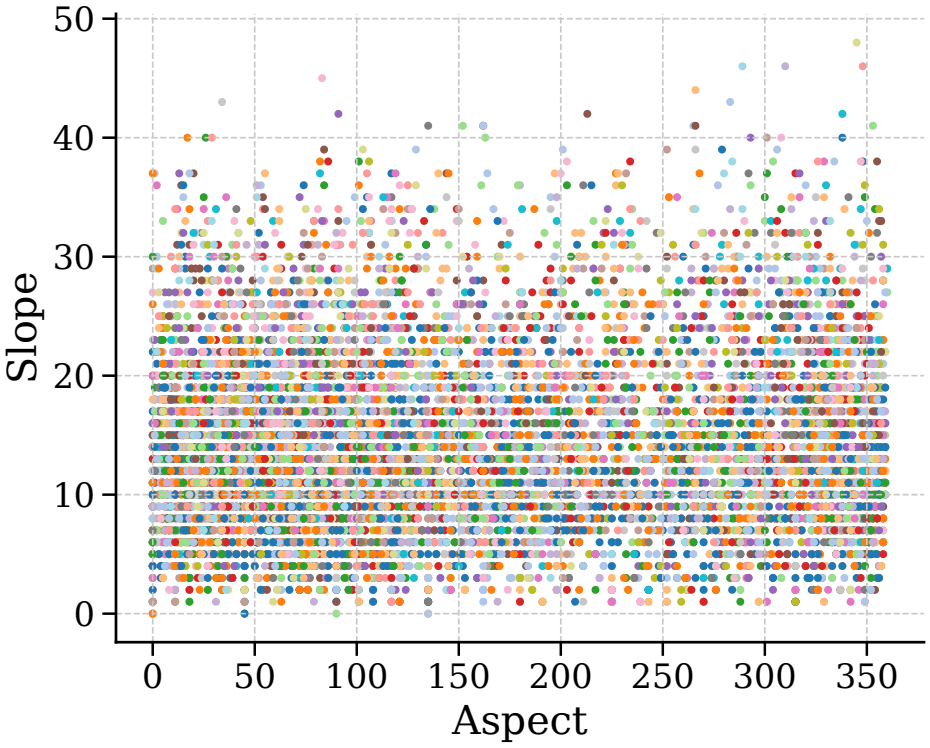}
}

\vspace{0.8em}

\subfloat[Algorithm~\ref{alg:extened psi}\label{fig:plan-alg1}]{
  \includegraphics[width=0.29\textwidth]{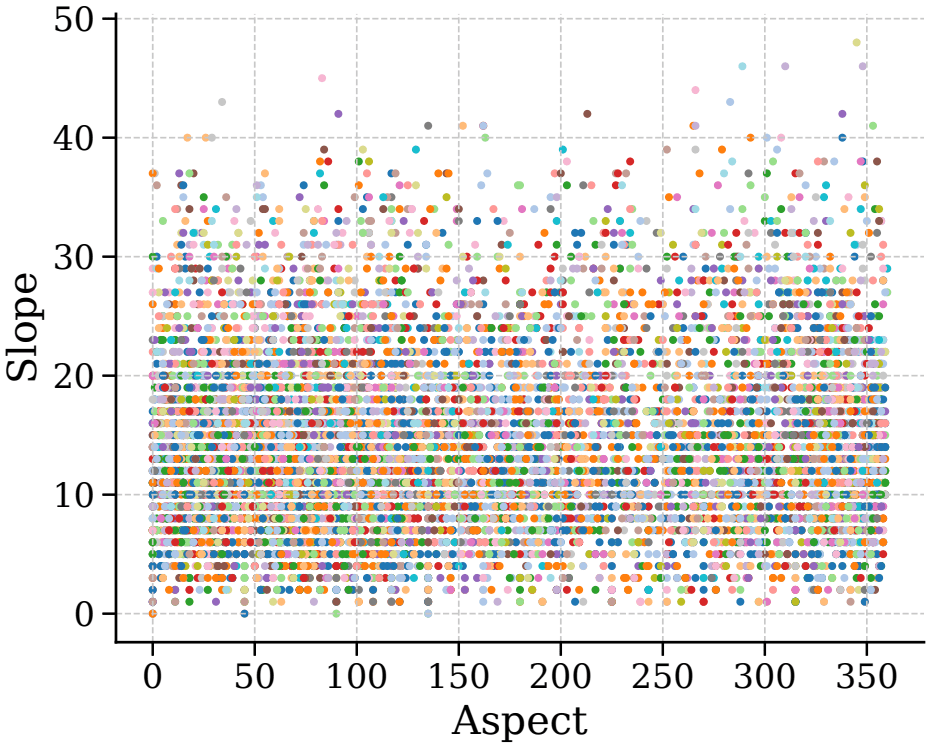}
}
\hfill
\subfloat[PADMM\label{fig:plan-padmm}]{
  \includegraphics[width=0.29\textwidth]{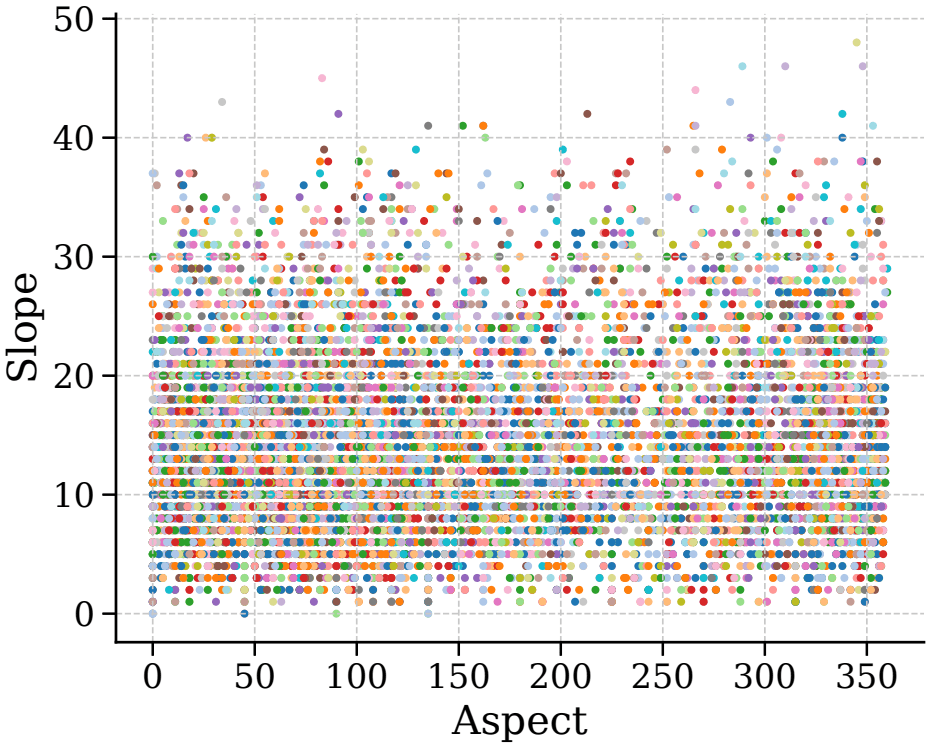}
}
\hfill
\subfloat[GrpADMM\label{fig:plan-grpadmm}]{
  \includegraphics[width=0.29\textwidth]{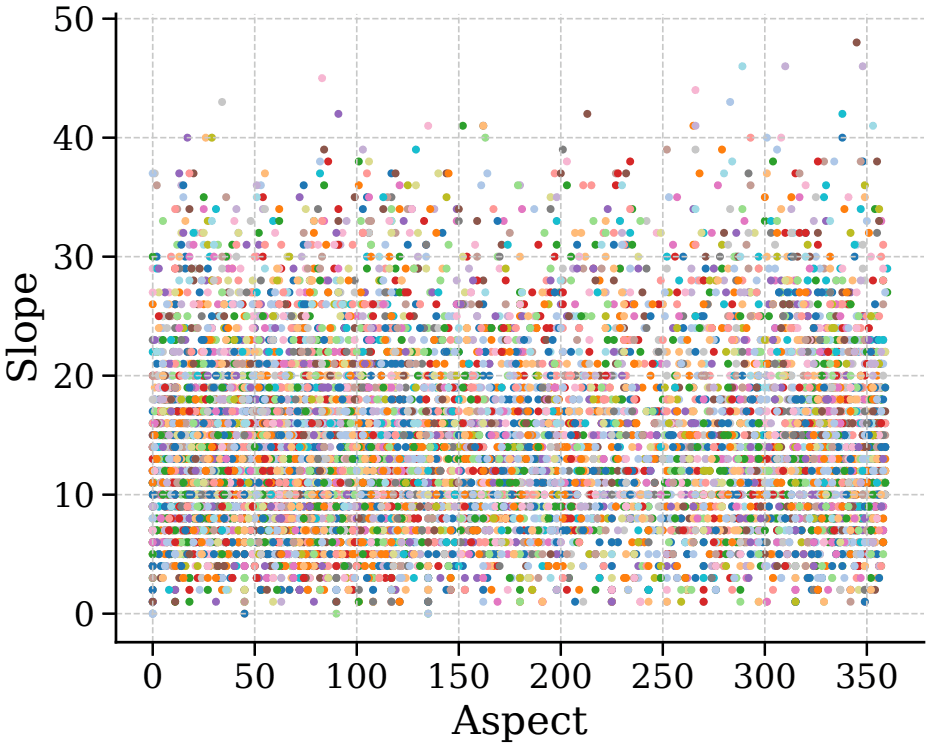}
}

\caption{Visualisation of the Forest CoverType graph-fused regression experiment in the feature plane. Panel (a) shows the response variable \texttt{Elevation}, while panels (b)--(e) display the consensus clusters produced by the four algorithms. Each colour represents one connected component of the graph obtained after thresholding the edge variables by $\|w_e\|\le 10^{-3}$.}
\label{fig:Forest_cover_cluster}
\end{figure}

The convergence plots for this dataset, shown in Figures~\ref{fig:Forest_cover_dataset}--\ref{fig:Forest_cover_cluster}, reveal the same overall pattern as in the California Housing experiment, but now in a genuinely large-scale regime. Algorithm~\ref{alg:2} again achieves the smallest relative objective residual and the smallest combined KKT residual. The gap between allowing non-decreasing step-sizes and fixed-step methods is even more pronounced in the objective and KKT curves. As before, GrpADMM and PADMM enforce feasibility more aggressively, but this does not translate into better overall performance. The clustering plots also indicate that Algorithm~\ref{alg:2} induces the strongest consensus pattern among the four methods.

Table~\ref{tab:covtype_results_short} reports runtime and graph-consensus statistics, where Algorithm~\ref{alg:2} again produces the largest fused-edge fraction and the smallest number of connected components. However, its runtime remains comparable to that of Algorithm~\ref{alg:extened psi}. The GrpADMM is somewhat faster, but the figures show that this speed advantage comes at the cost of a significantly weaker final objective value and a noticeably larger KKT residual.

Overall, the two real-data experiments yield a consistent conclusion. On both datasets, Algorithm~\ref{alg:2} provides the best overall balance between objective decrease and graph-induced consensus. The fixed step-size counterpart, especially GrpADMM, often achieves smaller raw feasibility residuals, but this advantage is offset by worse objective behaviour and larger KKT residuals.

\section{Conclusion}

We presented two practical step-size strategies for \ref{eq:GrpADMM} to solve separable convex problems of the form \eqref{eq:model}. In the first strategy, the primal steps were iteratively decaying, eliminating the need to estimate $\|A\|$. This step-size sequence converges to a positive limit, and using this crucial fact, we proved global convergence of the iterates. Furthermore, we modified the proximal terms in the $w$-update of \ref{eq:GrpADMM}, which enabled us to propose an eventually increasing step-size strategy and to prove the algorithm's global convergence. Numerical experiments on LASSO signal recovery, image deblurring, optimal transport problems, and graph-fused regression problems on large real datasets demonstrate that the proposed methods are effective and competitive in practice.

Several directions remain open for further investigation.
\begin{itemize}
    \item A natural next step is to extend the step-size strategies of Algorithms~\ref{alg:extened psi} and~\ref{alg:2} to the broader class of separable optimisation problems considered in \cite{yin2024golden}.

\item 
The present analysis of Algorithms~\ref{alg:2} leaves the question of whether the admissible range of the parameter $\psi$ can be further enlarged. Although Algorithm~\ref{alg:extened psi} allows a wider range $\psi\in(1,1+\sqrt{3})$ under a modified setting, it remains an interesting open question whether this interval can be further extended. Addressing this question may require a sharper Lyapunov estimate or a different coupling between the extrapolation parameter and the proposed step-size rule. We leave this issue for future investigation.

\item It was observed in \cite{he2016proximal} that indefinite proximal terms can sometimes lead to better numerical
performance in ADMM-type methods. It would therefore be interesting to investigate whether the
convergence theory of Algorithms~\ref{alg:extened psi} and~\ref{alg:2} can be extended, possibly under suitable modifications,
to the case where the proximal matrices $S$ and $T$ are indefinite.

\item When $g$ is strongly convex, accelerated variants of GrpADMM have been developed in \cite{Chen2023GRPADMM} with improved convergence properties. It would be desirable to derive analogous accelerated versions of the two proposed algorithms.

\item It would also be worthwhile to study whether the proposed step-size rules can be extended to more general settings, such as multi-block, stochastic, or certain structured nonconvex variants of prox-based ADMM-type algorithms.
\end{itemize}

\begin{acknowledgements}
The authors sincerely thank the Editor and the anonymous referees for their careful reading of the manuscript and for their valuable comments and suggestions, which helped improve the clarity and presentation of the paper. Santanu Soe gratefully acknowledges A/Prof. Matthew K. Tam for his constant support, encouragement, and guidance throughout his PhD. The research of Santanu Soe was supported by the Prime Minister’s Research Fellowship program (Project number SB23242132MAPMRF005015), Ministry of Education, Government of India.
\end{acknowledgements}
\section*{Data Availability}
The \texttt{California Housing} and \texttt{Forest CoverType} datasets used in the numerical experiments are publicly available and can be accessed through the \texttt{scikit-learn} dataset library. The Python scripts used to generate the numerical results are available from the corresponding author upon reasonable request.
\section*{Conflict of Interest}
The authors declare that there are no conflicts of interest in this paper.

\bibliographystyle{spmpsci}
\bibliography{my_bib}

\end{document}